\renewcommand{\leq}{\leqslant}
\renewcommand{\geq}{\geqslant}
\newcommand{\ptl}{\partial}
\newcommand{\rr}{{\mathbb{R}}}
\newcommand{\la}{\lambda}
\newcommand{\hh}{{\mathbb{H}}}
\newcommand{\h}{\mathcal{H}}
\newcommand{\sub}{\subset}
\newcommand{\subeq}{\subseteq}
\newcommand{\escpr}[1]{\big<#1\big>}
\newcommand{\Sg}{\Sigma} 
\newcommand{\Om}{\Omega}
\newcommand{\om}{\omega}
\newcommand{\eps}{\varepsilon}
\newcommand{\var}{\varphi}
\newcommand{\ga}{\gamma}
\newcommand{\mnh}{|N_{h}|}
\newcommand{\nuh}{\nu_{h}}
\newcommand{\ric}{\text{Ric}}
\newcommand{\m}{\mathbb{N}(\kappa)}
\newcommand{\e}{\mathbb{M}(\kappa)}
\newcommand{\stres}{M}
\DeclareMathOperator{\divv}{div}
\newtheorem{theorem}{Theorem}[section]
\newtheorem{proposition}[theorem]{Proposition}
\newtheorem{lemma}[theorem]{Lemma}
\newtheorem{corollary}[theorem]{Corollary}
\theoremstyle{definition}
\newtheorem{remark}[theorem]{Remark}
\theoremstyle{remark}
\numberwithin{equation}{section}
\begin{document}

\title[complete stable cmc surfaces in Sasakian sub-Riemannian $3$-manifolds]
{complete stable cmc surfaces with empty singular set \\ in Sasakian sub-Riemannian $3$-manifolds}

\author[C.~Rosales]{C\'esar Rosales}
\address{Departamento de Geometr\'{\i}a y Topolog\'{\i}a \\
Universidad de Granada \\ E--18071 Granada \\ Spain}
\email{crosales@ugr.es}

\date{July 15, 2010}

\thanks{The author has been supported by MCyT-Feder grant
MTM2010-21206-C02-01 and J.~A. grant P09-FQM-5088} 
\subjclass[2000]{53C17, 53C42} \keywords{Sasakian sub-Riemannian manifolds, 
stable surfaces, Jacobi fields, second variation}

\begin{abstract}
For constant mean curvature surfaces of class $C^2$ immersed inside Sasakian sub-Riemannian $3$-manifolds we obtain a formula for the second derivative of the area which involves horizontal analytical terms, the Webster scalar curvature of the ambient manifold, and the extrinsic shape of the surface. Then we prove classification results for complete surfaces with empty singular set which are stable, i.e., second order minima of the area under a volume constraint, inside the $3$-dimensional sub-Riemannian space forms. In the first Heisenberg group we show that such a surface is a vertical plane. In the sub-Riemannian hyperbolic $3$-space we give an upper bound for the mean curvature of such surfaces, and we characterize the horocylinders as the only ones with squared mean curvature $1$. Finally we deduce that any complete surface with empty singular set in the sub-Riemannian $3$-sphere is unstable. 
\end{abstract}

\maketitle

\thispagestyle{empty}

\section{Introduction}
\label{sec:intro}
\setcounter{equation}{0} 

The first variation formulae for area and volume in Riemannian $3$-manifolds imply that constant mean curvature surfaces (CMC surfaces) are critical points of the area under a volume constraint. They provide the first order candidates to solve the \emph{isoperimetric problem}, which seeks sets of least boundary area among those enclosing a given volume. In general, not any CMC surface bounds an isoperimetric region. Hence it is natural to compute the second variation and to consider second order minima of the area under a volume constraint. CMC surfaces with non-negative second derivative of the area for compactly supported volume-preserving variations are called \emph{stable surfaces} and have been extensively investigated in the literature.

Several authors have studied the stability condition of CMC surfaces in Riemannian $3$-space forms. The papers by Barbosa and do Carmo \cite{bdc}, and by Barbosa, do Carmo and Eschenburg \cite{bdce}, show that a compact, orientable, stable CMC surface immersed inside a Riemannian $3$-space form must be a geodesic sphere. The complete non-compact case was analyzed later. Da Silveira in \cite{silveira}, and L\'opez and Ros in \cite{lopez-ros}, used results by Fischer-Colbrie \cite{fc} to prove that, in Euclidean $3$-space, a complete, non-compact, orientable, stable CMC surface is a plane. Also as a consequence of \cite{fc} it follows that a complete stable surface inside the $3$-sphere must be compact, and so it coincides with a geodesic sphere by \cite{bdce}. In the hyperbolic $3$-space, Da Silveira \cite{silveira} characterized the horospheres as the unique complete, non-compact, stable CMC surfaces whose mean curvature $H$ satisfies $H^2\geq 1$. He also described in \cite{silveira} a one-parameter family of stable, non-umbilic, embedded CMC surfaces with $0<H^2<1$. Examples of non-totally geodesic area-minimizing surfaces in the hyperbolic $3$-space were previously discovered in \cite{hass}. 

Motivated by classical area-minimizing problems, the study of variational questions related to the area in \emph{sub-Riemannian geometry} has been recently focus of attention. In particular, the theory of minimal and CMC surfaces in sub-Riemannian $3$-manifolds has received an increasing development in the last years. In this paper we discuss some aspects of this theory in the setting of Sasakian $3$-manifolds, with special emphasis on the sub-Riemannian $3$-space forms.

There are several ways of introducing \emph{Sasakian sub-Riemannian $3$-manifolds}. They are contact sub-Riemannian $3$-manifolds for which the diffeomorphisms associated to the Reeb vector field are isometries (see Section~\ref{subsec:ssR3m} for a precise definition). This is equivalent to that the Webster torsion of the underlying pseudo-Hermitian $3$-manifold \cite{webster}, \cite{chern} vanishes identically. From another point of view, a Sasakian sub-Riemannian manifold is obtained from a Sasakian Riemannian manifold by restriction of the Sasaki metric to the contact plane. We should stress that Sasakian geometry has many connections with different branches of Mathematics and Physics. We refer the reader to \cite{boyer} for an excellent introduction to the subject which includes a wide variety of examples. 

In the study of the equivalence problem in sub-Riemannian geometry it is shown, by means of the Tanaka connection \cite{webster}, \cite[Sect.~10.4]{blair}, \cite{falbel3}, that a simply connected, homogeneous, contact sub-Riemannian $3$-manifold can be described in terms of some geometric and algebraic invariants, see \cite{falbel1}. For a Sasakian sub-Riemannian $3$-manifold the most important invariant is the \emph{Webster scalar curvature} $K$. We define it in \eqref{eq:webster} by following the approach given in \cite[Sect.~10.4]{blair} for arbitrary contact metric structures. However, it is not difficult to check that the function $K$ coincides, up to some constant, with the sectional curvature of the contact plane computed with respect to the Tanaka connection.  As a consequence, $K$ is invariant under sub-Riemannian isometries. Moreover, the function $K$ determines locally a Sasakian sub-Riemannian $3$-manifold. In particular, if two such manifolds have the same constant Webster scalar curvature then they are locally isometric \cite[Thm.~1.2]{falbel1}. All this might suggest that $K$ plays for a Sasakian $3$-manifold the same role as the Gaussian curvature for a Riemannian surface.

As in Riemannian geometry, it is natural to consider first those sub-Riemannian spaces which represent, in some sense, the most simple and symmetric examples of the theory. In dimension $3$ these are, respectively, the first Heisenberg group $\hh^1$, the special unitary group SU(2), and the universal cover of the special lineal group SL(2,\,$\rr$), see \cite{montgomery} and \cite{gromov-cc}. We introduce them by means of a unified approach as the Sasakian $3$-manifolds $\e$ in Section~\ref{subsec:sf}. There are some reasons for which the spaces $\e$ may be understood as sub-Riemannian counterparts to the Riemannian $2$-spaces forms. For example, it is known that any simply connected, homogeneous, contact sub-Riemannian $3$-manifold has an isometry group of dimension $3$ or $4$, see \cite{hughen}, \cite{diniz} and \cite{falbel1}. From this perspective, the spaces $\e$ are the unique ones with isometry group of maximal dimension. Moreover, by a result of Tanno \cite{tanno}, see also \cite[Thm.~1.2 and Rem.~1.1]{falbel1}, the space $\e$ is the unique complete, simply connected, Sasakian sub-Riemannian $3$-manifold with constant Webster scalar curvature $\kappa$. The manifolds $\e$ are also spaces of bundle type \cite{montgomery} since their sub-Riemannian structure comes from a submersion onto a Riemannian $2$-space form. Furthermore, the associated extended metric $g_\kappa$ defined in \eqref{eq:Rm} gives rise to one of the Thurston homogeneous geometries, see \cite{scott}. As a matter of fact, the Carnot-Carath\'eodory distance of $\e$ can be realized as the limit in the Hausdorff-Gromov topology of a family of Riemannian distances obtained by dilations of $g_\kappa$ in the vertical direction. The Heisenberg group $\mathbb{M}(0)$ is also an example of a Carnot group since its Lie algebra is nilpotent and stratified. This property does not hold for $\e$ if $\kappa\neq 0$. In particular, this shows that the family of Sasakian sub-Riemannian $3$-manifolds is not included in that of Carnot groups.

Once we have introduced in Sections~\ref{subsec:ssR3m} and \ref{subsec:sf} the sub-Riemannian structures in which we are interested, we turn back to the study of local minimizers of the area with or without a volume constraint.

There are several notions of \emph{volume} and \emph{area} for contact sub-Riemannian manifolds. In this paper we follow the point of view in \cite{rr1}, \cite{rr2} and \cite{hr1}: the volume of a Borel set is the Riemannian volume associated to the extended Riemannian metric \eqref{eq:Rm}, whereas the area of a surface is defined by means of the integral formula \eqref{eq:area}, which gives an expression for the \emph{horizontal perimeter \'a la De Giorgi} \eqref{eq:per} of a set with smooth boundary. 

In Section~\ref{sec:1ndvar} we gather some facts about $C^2$ \emph{area-stationary} surfaces with or without a volume constraint inside Sasakian sub-Riemannian $3$-manifolds. By using the arguments in \cite{rr2} we can deduce that, off of the \emph{singular set} given by the points where the tangent plane coincides with the contact one, the \emph{mean curvature} $H$ defined in \eqref{eq:mc} is constant, and the characteristic curves are ambient geodesics of curvature $H$ with respect to the Carnot-Carath\'eodory distance (CC-geodesics). These properties have been also studied in pseudo-Hermitian $3$-manifolds \cite{chmy}, vertically rigid manifolds \cite{hp1}, Carnot groups \cite{montefalcone}, and even in general sub-Riemannian manifolds \cite{hp2}. They suppose the starting point of the theory of CMC surfaces in sub-Riemannian geometry. One of the main goals of the theory is to establish classification theorems for complete CMC surfaces. The best results in this direction have been proved for surfaces with non-empty singular set. In fact, by using the description of the singular set of a CMC surface \cite{chmy}, and the orthogonality property between the characteristic curves and the curves contained in the singular set \cite{rr2}, it was possible to give a description of all the complete volume-preserving area-stationary surfaces with non-empty singular set in the first Heisenberg group $\mathbb{M}(0)$ and in the sub-Riemannian $3$-sphere $\mathbb{M}(1)$, see \cite{rr2} and \cite{hr1}. 

In the case of CMC surfaces with empty singular set the complete classification is far from being established even for the space forms $\e$. Let us summarize some related works. If $\Sg$ is a CMC surface with empty singular set in $\e$ then the \emph{horizontal Gauss map} defined in \eqref{eq:nuh} is continuous on $\Sg$, and the surface is ruled by CC-geodesics of the same curvature. It follows from the description given in Section~\ref{sec:ccgeo} that, if $\Sg$ is complete and $\kappa\leq 0$, then $\Sg$ cannot be compact. In the first Heisenberg group $\mathbb{M}(0)$ additional results have been obtained for minimal surfaces (those with vanishing mean curvature), see \cite{gp}, \cite{chenghwang}, \cite{bscv}, and for CMC surfaces of revolution \cite{rr1}. In the sub-Riemannian $3$-sphere $\mathbb{M}(1)$ it is known, as a consequence of  \cite[Thm.~E]{chmy}, that a compact CMC surface $\Sg$ with empty singular set is topologically a torus.  In \cite{hr1} it was proved that, if the mean curvature $H$ of $\Sg$ satisfies $H/\sqrt{1+H^2}\in\mathbb{Q}$, then $\Sg$ is congruent to a vertical Clifford torus. This result is not true if $H/\sqrt{1+H^2}\in\rr\setminus\mathbb{Q}$, as examples of unduloidal CMC tori were discovered in \cite{hr1}. Other important surfaces with empty singular set are the so-called \emph{vertical surfaces}: they satisfy that the Reeb vector field of the ambient manifold is always tangent. Complete CMC vertical surfaces have been classified in $\e$ for $\kappa\geq 0$, see \cite{gp}, \cite{rr2} and \cite{hr1}. In Proposition~\ref{prop:vert} of the paper we give a similar characterization of such surfaces for general Sasakian sub-Riemannian $3$-manifolds.

In spite of the aforementioned results, it seems that the CMC condition is not enough in order to prove optimal classification results for complete surfaces with empty singular set.
This provides a motivation for the study of second order minima of the area with or without a volume constraint. As in Riemannian geometry, we define in Section~\ref{sec:2ndvar} a \emph{stable surface under a volume constraint} as a $C^2$ volume-preserving area-stationary surface with non-negative second derivative of the area under compactly supported variations which leaves invariant the volume enclosed by the surface. Following the classical approach in the context of minimal surfaces, we say that an area-stationary surface is \emph{stable} if the second derivative of the area is nonnegative for variations \emph{which do not necessarily preserve volume}. It is then clear that, for an area-stationary surface, the condition of being stable is stronger than the one of being stable under a volume constraint.

The study of the stability in sub-Riemannian $3$-space forms has focused on area-stationary surfaces in the first Heisenberg group $\mathbb{M}(0)$. In \cite{bscv} it was proved that the Euclidean vertical planes are the unique entire stable area-stationary intrinsic graphs in $\mathbb{M}(0)$. The same characterization was obtained in \cite{dgnp} for entire area-stationary Euclidean graphs with empty singular set. These results suggest that any complete stable area-stationary surface with empty singular set in $\mathbb{M}(0)$ must be a vertical plane. This was definitely proved in \cite{dgnp-stable} for embedded surfaces, and in \cite{hrr} for immersed ones. Also in \cite{hrr}, complete stable area-stationary surfaces with non-empty singular set in $\mathbb{M}(0)$ were classified: they are either horizontal Euclidean planes, or congruent to the hyperbolic paraboloid $t=xy$. 

In this paper we use a unified approach inspired in \cite{hrr} to characterize complete stable surfaces with empty singular set in any space form $\e$. Our motivations to consider surfaces with empty singular set are twofold. On the one hand, it is natural to expect that the stability condition for such surfaces should imply more rigidness than the CMC condition. On the other hand, such surfaces are interesting from the analytical point of view since they possess a well-behaved horizontal Gauss map. In Theorem~\ref{th:main} we prove the following: 
\begin{quotation}
\emph{Let $\Sg$ be a complete orientable surface of class $C^2$ immersed in $\e$ with empty singular set and constant mean curvature $H$. If $\Sg$ is stable under a volume constraint then $H^2+\kappa\leq 0$. Moreover, if equality holds then $\Sg$ is a vertical surface.}
\end{quotation}
This theorem, together with the description of complete CMC vertical surfaces in $\e$ given in \cite{rr2} and Corollary~\ref{cor:vertmodel}, allows us to prove in Corollary~\ref{cor:main} that:
\vspace{-0,5cm}
\begin{quotation}
\emph{
\begin{itemize}
\item[i)] The vertical planes are the unique complete surfaces with empty singular set in the first Heisenberg group $\mathbb{M}(0)$ which are stable under a volume constraint.
\item[ii)] Any complete surface in the sub-Riemannian $3$-sphere $\mathbb{M}(1)$ which is stable under a volume constraint must have non-empty singular set.
\item[iii)] In the sub-Riemannian hyperbolic $3$-space $\mathbb{M}(-1)$, the mean curvature $H$ of a complete surface with empty singular set which is stable under a volume constraint satisfies $H^2\leq 1$. If equality holds, then the surface is a horocylinder. 
\end{itemize}
}
\end{quotation}

In order to prove Theorem~\ref{th:main} we first compute the second derivative of the area for certain variations of a CMC surface inside a Sasakian sub-Riemannian $3$-manifold. Second variation formulas for the area have appeared in different contexts. In the first Heisenberg group $\mathbb{M}(0)$ such formulas have been proved for minimal intrinsic graphs \cite{bscv}, \cite{mscv}, \cite{selby}, and for any minimal surface \cite{dgn}, \cite{hrr}. In \cite{chmy} and \cite{hp2}, second variation formulas are obtained for variations of class $C^3$ of CMC surfaces in pseudo-Hermitian $3$-manifolds and in vertically rigid $3$-manifolds, respectively. In \cite{montefalcone} it is computed the second derivative of the area for $C^\infty$ variations of CMC surfaces in Carnot groups.  For all the previous formulas it is assumed that the variation is supported off of the singular set. Expressions for the second derivative of the area for suitable variations moving the singular set of a $C^2$ area-stationary surface have been recently given in $\mathbb{M}(0)$, see \cite{hrr}, and in pseudo-Hermitian $3$-manifolds \cite{galli}. In Theorem~\ref{th:2ndvar} we extend the calculus developed in \cite{hrr} for minimal surfaces in $\mathbb{M}(0)$ to prove a new second variation formula supported off of the singular set. It is worth pointing out that our generalization comes not only since we consider CMC surfaces in arbitrary Sasakian sub-Riemannian $3$-manifolds, but also from the fact that the $C^1$ variations employed to move the surface may have a non-vanishing acceleration vector field, see \eqref{eq:newvar}. The motivation for choosing these type of variations comes from Lemma~\ref{lem:vpvar}: they appear in the construction of volume-preserving variations with a prescribed velocity vector field.  Our resulting formula reads
\[
(A+2HV)''(0)=\mathcal{Q}(u,u),
\]
where $A$ and $V$ are the area and volume functionals associated to the variation, $H$ is the mean curvature of the surface, $\mathcal{Q}$ is the quadratic form defined in \eqref{eq:indexform}, and $u$ is the normal component of the velocity vector field. By analogy with the Riemannian situation studied in \cite{bdce} we refer to $\mathcal{Q}$ as the \emph{index form} associated to the surface. The expression of the index form involves the derivative of $u$ along the characteristic curves, the Webster scalar curvature $K$ of the ambient manifold, and geometric terms related to the extrinsic shape of the surface. By means of a suitable integration by parts formula we also deduce that
\[
\mathcal{Q}(u,u)=-\int_{\Sg} u\,\mathcal{L}(u)\,d\Sg,
 \]
where $\mathcal{L}$ is the hypoelliptic operator on the surface $\Sg$ defined in \eqref{eq:lu}.

As a consequence of the second variation formula we prove in Proposition~\ref{prop:stcond1} that, if $\Sg$ is stable under a volume constraint, then $\mathcal{Q}(u,u)\geq 0$ for any mean zero function with compact support on $\Sg$ which is also $C^1$ along the characteristic curves. In the case of stable area-stationary surfaces the same inequality holds without assuming the mean zero condition for the function $u$. It is interesting to observe that $\mathcal{Q}$ depends on $K$ in such a way that the integral inequality $\mathcal{Q}(u,u)\geq 0$ is more restrictive provided $K\geq 0$. This is illustrated not only in Theorem~\ref{th:main}, but also in Proposition~\ref{prop:unexistence}, where we generalize the instability of the minimal Clifford torus in the sub-Riemannian $3$-sphere \cite{chmy} by showing the non-existence of compact stable area-stationary surfaces with empty singular set in Sasakian sub-Riemannian $3$-manifolds with $K\geq 1$. 

In \cite{silveira}, Da Silveira used a suitable cut-off with mean zero of the function $u=1$ to prove that a complete, non-compact, stable CMC surface $\Sg$ in $\rr^3$ is a plane. Geometrically the function $u=1$ is associated to the variation of $\Sg$ by equidistant surfaces with respect to the Euclidean distance. In the first Heisenberg group $\mathbb{M}(0)$ the same role is played by the function $u=\mnh$, where $N$ is the Riemannian unit normal to the surface for the extended metric \eqref{eq:Rm}, and $N_h$ denotes its projection onto the contact plane. In fact, in \cite{hrr} and \cite{dgnp-stable} it was possible to show, by means of a suitable cut-off of $u=\mnh$, that the vertical Euclidean planes are the unique, complete, stable area-stationary surfaces with empty singular set in $\mathbb{M}(0)$. To prove Theorem~\ref{th:main} we follow a similar argument. More precisely, we see in Proposition~\ref{prop:lnh>0} that, if the inequality $H^2+\kappa\geq 0$ holds, then the stability operator $\mathcal{L}$ in \eqref{eq:lu} satisfies $\mathcal{L}(\mnh)\geq 0$, and the inequality is strict provided $H^2+\kappa>0$, or $\Sg$ does not coincide with a vertical surface of $\e$. In both cases we are able to produce a compactly supported mean zero function $v=f\mnh$ in $\Sg$ so that $\mathcal{Q}(v,v)<0$. By Lemma~\ref{lem:vpvar} we can find a variation preserving the volume of $\Sg$ for which the area decreases. This fact, together with the stability inequality in Proposition~\ref{prop:stcond1}, completes the proof of Theorem~\ref{th:main}. It is interesting to observe that the construction of $v$ relies on the variational vector field associated to the characteristic curves of $\Sg$. Such a vector field is an example of the so-called \emph{CC-Jacobi fields} that we study in detail in Section~\ref{sec:ccgeo}.

We should stress that Corollary~\ref{cor:main} involves stable surfaces \emph{under a volume constraint}. In the particular case of minimal surfaces in the first Heisenberg group $\mathbb{M}(0)$ our characterization of the vertical planes is therefore stronger than the one given in \cite{hrr} and \cite{dgnp-stable}. As another application of Corollary~\ref{cor:main}, we deduce that the $C^2$ solutions to the isoperimetric problem in the sub-Riemannian $3$-sphere $\mathbb{M}(1)$ must have non-empty singular set. In particular, such solutions belong to the family of volume-preserving area-stationary surfaces with non-empty singular set described in \cite[Sect.~5]{hr1}. The study of the stability condition for these surfaces is treated in \cite{hr2}. On the other hand, we see that Corollary~\ref{cor:main} contains, in the case of the sub-Riemannian hyperbolic $3$-space $\mathbb{M}(-1)$, a characterization of the horocylinders as the unique complete stable surfaces under a volume constraint with empty singular set and mean curvature $H=1$. However, the classification in $\mathbb{M}(-1)$ of stable surfaces with empty singular set and $0\leq H^2<1$ remains open. It seems natural to expect, by analogy with the situation in the Riemannian hyperbolic model, that it is possible to construct examples of such surfaces different from vertical ones. 

Finally, we would like to mention that examples of area-minimizing surfaces with regularity less than $C^2$ in the first Heisenberg group $\mathbb{M}(0)$ have been found in \cite{pauls-regularity}, \cite{chy}, \cite{r2} and \cite{mscv}.  Hence our results in $\mathbb{M}(0)$ are optimal for surfaces of class $C^2$.

The paper is organized into six sections. The second one contains
some background material.  In the third one we study CC-geodesics and CC-Jacobi fields in Sasakian sub-Riemannian $3$-manifolds. In Section~\ref{sec:1ndvar} we gather some facts about area-stationary surfaces with or without a volume constraint. The fifth section is devoted to the second variation formula and the obtention of the integral inequality for stable surfaces. We conclude by proving in Section~\ref{sec:main} our existence and classification results for stable surfaces with empty singular set.

\section{Preliminaries}
\label{sec:preliminaries}
\setcounter{equation}{0}

In this section we introduce the geometric setup, and collect some results that will be needed in the paper.  We have organized it in several subsections.

\subsection{Sasakian sub-Riemannian $3$-manifolds}
\label{subsec:ssR3m}
A \emph{contact sub-Riemannian manifold} is a connected manifold $M$ together with a Riemannian metric $g_h$ defined on an oriented 
contact distribution $\h$, which is usually called \emph{horizontal distribution}. An \emph{isometry} between contact sub-Riemannian manifolds is a diffeomorphism $\phi:M\to M'$ whose differential at any $p\in M$ provides a linear isometry between $\h_p$ and $\h'_{\phi(p)}$. A contact sub-Riemannian manifold $M$ is \emph{homogeneous} if the group $\text{Iso}(M)$ of isometries of $M$ acts transitively on $M$.

Let $M$ be a contact sub-Riemannian $3$-manifold. We denote by $\eta$ the contact $1$-form on $M$ such that $\text{Ker}(\eta)=\h$ and the restriction of $d\eta$ to $\h$ coincides with the area form on $\h$. Here $d\eta$ is the $2$-form given by
\[
d\eta(U,V)=\frac{1}{2}\,\big(U(\eta(V))-V(\eta(U))-\eta([U,V])\big),
\]
where $[U,V]$ is the Lie bracket of two vector fields on $M$. We shall always choose the orientation of $M$ induced by the $3$-form $\eta\wedge d\eta$. The \emph{Reeb vector field} associated to $\eta$ is the smooth vector field $T$ transversal to $\h$ defined by $\eta(T)=1$ and $d\eta(T,U)=0$, for any vector field $U$. A vector field $U$ is \emph{horizontal} if it coincides with its horizontal projection $U_h$. A vector field is \emph{vertical} if it is proportional to $T$.

We denote by $J$ the orientation-preserving $90$ degree rotation on the contact plane $\h$. This defines a complex structure on $\h$ which is compatible with $\eta$, i.e., $J^2=-\text{Id}$, and equality
\[
g_h(U,V)=d\eta(U,J(V)) 
\]
holds for any pair of horizontal vector fields on $M$. From this we deduce that $g_h$ is a Hermitian metric, i.e., $g_h(J(U),J(V))=g_h(U,V)$. In particular, the triple $(M,\eta,-J)$ is a \emph{pseudo-Hermitian strictly pseudo-convex CR-manifold} in the sense of \cite[p.~76]{blair}. 

The \emph{canonical extension} of $g_h$ is the Riemannian metric $g=\escpr{\cdot\,,\cdot}$ on $M$ for which $T$ is a unit vector field orthogonal to $\h$. Thus $\phi\in\text{Iso}(M)$ if and only if $\phi$ is an isometry of $(M,g)$ preserving the horizontal distribution. We say that $M$ is \emph{complete} if the Riemannian manifold $(M,g)$ is complete.  

We extend $J$ to the whole tangent space to $M$ by $J(T):=0$. It is clear that
\begin{equation}
\label{eq:Rm}
\escpr{U,V}=d\eta(U,J(V))+\eta(U)\,\eta(V).
\end{equation} 
For any pair of vector fields $U$ and $V$ on $M$ it is easy to check that
\begin{align}
\label{eq:jj}
J^2(U)&=-U+\eta(U)\,T,
\\
\nonumber
\escpr{U,T}&=\eta(U),
\\
\nonumber
\escpr{J(U),J(V)}&=\escpr{U,V}-\eta(U)\,\eta(V).
\end{align}
It follows that $(M,\eta,-J,g)$ is a \emph{contact metric structure} \cite[p.~36]{blair}. Moreover, we obtain
\begin{align}
\label{eq:conmute}
\escpr{J(U),V}+\escpr{U,J(V)}&=0,
\\
\label{eq:modj}
|J(U)|^2=|U|^2-\escpr{U,T}^2&=|U_h|^2.
\end{align}

By a \emph{Sasakian sub-Riemannian $3$-manifold} we mean a contact sub-Riemannian $3$-manifold such that any diffeomorphism of the uniparametric group associated to $T$ belongs to $\text{Iso}(M)$. This is equivalent to that $T$ is a Killing field in $(M,g)$, and so the contact metric structure $(M,\eta,-J,g)$ is  \emph{K-contact}. As a consequence, the torsion introduced in \cite{chern} vanishes identically, and the Levi-Civit\`a connection $D$ associated to $g$ satisfies the equality
\begin{equation}
\label{eq:dut}
D_UT=J(U),
\end{equation}
for any vector field $U$ on $M$, see  \cite[p.~67]{blair}. In particular, any integral curve of $T$ is a geodesic in $(M,g)$ parameterized by arc-length. Furthermore, by \cite[Cor.~6.5 and Thm.~6.3]{blair}, and the second equality in \eqref{eq:jj}, we get 
\begin{equation}
\label{eq:dujv}
D_U\left(J(V)\right)=J(D_UV)+\escpr{V,T}\,U-\escpr{U,V}\,T,
\end{equation} 
for vector fields $U$ and $V$. This implies that the $CR$-structure induced by $J$ is \emph{integrable} \cite[p.~76]{blair}.

Next we gather some properties of the curvature for a Sasakian manifold. The curvature tensor in $(M,g)$ is defined by
\[
R(U,V)W=D_{V}D_{U}W-D_{U}D_{V}W+D_{[U,V]}W,
\]
for tangent vectors $U,V,W$. From \cite[Prop.~7.3]{blair} and the second equality in \eqref{eq:jj}, we have
\begin{equation}
\label{eq:ruvt}
R(U,V)T=\escpr{U,T}\,V-\escpr{V,T}\,U.
\end{equation}
The Riemannian sectional curvature of the plane generated by an orthonormal basis $\{x,y\}$ is $\escpr{R(x,y)x,y}$. By \eqref{eq:ruvt} all the planes containing $T$ have sectional curvature equal to $1$. 

We denote by $\text{Ric}$ the Ricci curvature in $(M,g)$ defined,
for tangent vectors $U$ and $V$, as the trace of the map
$W\mapsto R(U,W)V$.  By taking into account \eqref{eq:ruvt} we obtain the following equality
\begin{equation}
\label{eq:ricut}
\text{Ric}(U,T)=2\escpr{U,T}.
\end{equation}

Let $K_h(p)$ be the Riemannian sectional curvature of the contact plane $\h_p$. The \emph{Webster scalar curvature} $K$ of any contact metric structure can be computed by  $K=(1/8)(2K_h+\text{Ric}(T,T)+4)$, see \cite[Sect.~10.4]{blair}. For Sasakian $3$-manifolds we deduce from \eqref{eq:ricut} that
\begin{equation}
\label{eq:webster}
K=\frac{1}{4}\,\big(K_h+3\big).
\end{equation}
Moreover, the curvature of $(M,g)$ is completely determined by the Webster scalar curvature \cite[Thm.~7.13]{blair}. This is illustrated in the following result.

\begin{lemma}
\label{lem:ruvu}
Let $M$ be a Sasakian sub-Riemannian $3$-manifold with Webster scalar curvature $K$. For any horizontal vector $U$, and any tangent vector $V$, we have
\begin{align*}
R(U,V)U&=(4K-3)\,\escpr{V,J(U)}\,J(U)+|U|^2\,\escpr{V,T}\,T.
\\
\emph{Ric}(V,V)&=(4K-2)\,|V_h|^2+2\escpr{V,T}^2.
\end{align*}
\end{lemma}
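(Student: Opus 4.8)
The goal is to compute $R(U,V)U$ and $\operatorname{Ric}(V,V)$ for a horizontal vector $U$ and an arbitrary tangent vector $V$, knowing (by \cite[Thm.~7.13]{blair}) that the curvature of a Sasakian $3$-manifold is entirely determined by the Webster scalar curvature $K$. My strategy is to decompose everything into the two natural invariant pieces — the "horizontal sectional" part and the "mixed" part involving the Reeb direction $T$ — and exploit the rigidity of dimension $3$.

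First I would reduce to an adapted orthonormal frame. Since $U$ is horizontal I may assume $U\neq 0$, set $e_1=U/|U|$, and put $e_2=J(e_1)$; by \eqref{eq:modj} and \eqref{eq:conmute} the triple $\{e_1,e_2,T\}$ is an orthonormal frame with $J(e_1)=e_2$, $J(e_2)=-e_1$ (using \eqref{eq:jj}). The idea is that by multilinearity it suffices to evaluate $R(e_1,\,\cdot\,)e_1$ on the basis $\{e_1,e_2,T\}$. The term $R(e_1,e_1)e_1$ vanishes by antisymmetry. The mixed term $R(e_1,T)e_1$ is handled directly from \eqref{eq:ruvt}: reading off $R(e_1,T)T$ gives the sectional curvature $\escpr{R(e_1,T)e_1,T}=1$ of a plane containing $T$, and the full vector $R(e_1,T)e_1$ is forced to be $\escpr{e_1,e_1}\,T=T$ by the symmetries of the curvature tensor together with $R(U,V)T$ from \eqref{eq:ruvt} via the identity $\escpr{R(e_1,T)e_1,e_2}=\escpr{R(e_1,e_2)e_1,T}$, which one checks vanishes. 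The remaining horizontal term $R(e_1,e_2)e_1$ is where $K$ enters: its only possibly nonzero component is along $e_2$, and $\escpr{R(e_1,e_2)e_1,e_2}=K_h$ is exactly the sectional curvature of the contact plane, which by \eqref{eq:webster} equals $4K-3$.

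Assembling these pieces for a general $V=\escpr{V,e_1}\,e_1+\escpr{V,e_2}\,e_2+\escpr{V,T}\,T$ and scaling back from $e_1$ to $U$ (picking up $|U|^2$ in each quadratic-in-$U$ term), I would get
\[
R(U,V)U=(4K-3)\,\escpr{V,e_2}\,|U|^2\,e_2+|U|^2\,\escpr{V,T}\,T.
\]
Recognizing $|U|\,e_2=J(U)$ and $\escpr{V,e_2}\,|U|=\escpr{V,J(U)}$ converts the first term into $(4K-3)\escpr{V,J(U)}\,J(U)$, which is the claimed formula. For the Ricci identity I would then take the trace of $W\mapsto R(V,W)V$ using the frame $\{e_1,e_2,T\}$ adapted now to $V$ (decomposing $V=V_h+\escpr{V,T}T$ and applying the first formula with the roles arranged so that $V_h$ plays the horizontal part): the horizontal directions each contribute $(4K-3)$ weighted by $|V_h|^2$ up to the $J$-geometry, the $T$-direction contributes via \eqref{eq:ricut}, and collecting terms yields $\operatorname{Ric}(V,V)=(4K-2)|V_h|^2+2\escpr{V,T}^2$; a useful consistency check is that \eqref{eq:ricut} is recovered on setting $V=T$.

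The main obstacle, I expect, is pinning down the off-diagonal and mixed components of $R(e_1,e_2)e_1$ and $R(e_1,T)e_1$ rigorously rather than by symmetry heuristics — i.e.\ verifying that the curvature operator really has no unexpected components in dimension $3$. The cleanest route is probably not frame-chasing at all but a direct computation from \eqref{eq:dut} and \eqref{eq:dujv}: differentiating the structural relation $D_UT=J(U)$ and using the integrability identity \eqref{eq:dujv} feeds straight into the definition of $R$, and this is almost certainly how \cite[Thm.~7.13]{blair} is proved. I would keep the frame decomposition as the conceptual skeleton but expect the honest verification of the $K_h=4K-3$ horizontal sectional curvature to rest on \eqref{eq:webster} and the curvature characterization already quoted.
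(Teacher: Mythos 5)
Your plan is correct and follows essentially the same route as the paper: expand $R(U,V)U$ in the adapted orthonormal frame $\{U/|U|,\,J(U)/|U|,\,T\}$, kill the mixed components using \eqref{eq:ruvt} together with the curvature symmetries, identify the surviving horizontal component with $K_h=4K-3$ via \eqref{eq:webster}, and obtain the Ricci identity by splitting $V=V_h+\escpr{V,T}\,T$ and invoking \eqref{eq:ricut} plus the fact that planes containing $T$ have sectional curvature $1$. The "off-diagonal" worry you flag at the end is resolved exactly as you suspect — $\escpr{R(e_1,e_2)e_1,T}$ vanishes because $R(e_1,e_2)T=0$ by \eqref{eq:ruvt} — so no appeal to a direct computation from \eqref{eq:dut} and \eqref{eq:dujv} is needed.
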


\begin{proof}
If $U=0$ we are done. Suppose $|U|=1$. We take the orthonormal basis $\{U,W,T\}$, where $W=J(U)$. By elementary properties of the curvature tensor we obtain
\begin{align*}
R(U,V)U&=\escpr{R(U,V)U,W}\,W+\escpr{R(U,V)U,T}\,T
\\
&=\escpr{V,W}\,\escpr{R(U,W)U,W}\,W+\escpr{V,T}\,\escpr{R(U,T)U,W}\,W-\escpr{R(U,V)T,U}\,T
\\
&=K_h\,\escpr{V,W}\,W-\escpr{V,T}\,\escpr{R(U,W)T,U}\,W-\escpr{R(U,V)T,U}\,T
\\
&=(4K-3)\,\escpr{V,W}\,W+\escpr{V,T}\,T,
\end{align*}
where in the last equality we have used \eqref{eq:webster} and \eqref{eq:ruvt}.

On the other hand, if $V=V_h+\escpr{V,T}T$ then \eqref{eq:ricut} implies that
\[
\text{Ric}(V,V)=\text{Ric}(V_h,V_h)+2\escpr{V,T}^2.
\]
If $V_h=0$ we are done. Suppose $|V_h|=1$ and take the orthonormal basis $\{V_h,W,T\}$, where $W=J(V_h)$. We get
\[
\text{Ric}(V_h,V_h)=\escpr{R(V_h,W)V_h,W}+\escpr{R(V_h,T)V_h,T}=K_h+1=4K-2,
\]
where we have taken into account \eqref{eq:webster} and that the sectional curvatures of the planes containing $T$ are all equal to $1$. 
\end{proof}

\subsection{Sub-Riemannian $3$-space forms}
\label{subsec:sf}
Here we introduce the model spaces in the sub-Riemannian geometry of contact $3$-manifolds. We choose the point of view in \cite[Sect.~7.4]{blair}.

For $\kappa=-1,0,1$, we denote by $\m$ the complete, simply connected, Riemannian surface of constant sectional curvature $4\kappa$ described as follows. For $\kappa=1$ we consider the unit sphere $\mathbb{S}^2$ with its standard Riemannian metric scaled by the factor $1/4$. For $\kappa=-1,0$ we take the planar disk of radius $1/|k|$ centered at the origin and endowed with the Riemannian metric $\rho^2\,(dx^2+dy^2)$, where $\rho(x,y):=(1+\kappa(x^2+y^2))^{-1}$.  

For $\kappa=-1,0$ we denote $\e=\m\times\rr$. Let $(x,y,t)$ be Euclidean coordinates in $\rr^3$. We consider in $\e$ the contact distribution $\h=\text{Ker}(\eta)$, where $\eta:=\rho(x\,dy-y\,dx)+dt$ and $\rho(x,y,t):=\rho(x,y)$. A basis $\{X,Y,T\}$ of vector fields on $\e$ such that $X$, $Y$ are horizontal and $T$ is the Reeb vector field associated to $\eta$ is given by
\begin{align*}
X:&=\frac{1}{\rho}\left(\cos(2\kappa t)\,\frac{\ptl}{\ptl x}-\sin(2\kappa t)\,\frac{\ptl}{\ptl y}\right)
+\left(y\,\cos(2\kappa t)+x\,\sin(2\kappa t)\right)\frac{\ptl}{\ptl t},
\\
Y:&=\frac{1}{\rho}\left(\sin(2\kappa t)\,\frac{\ptl}{\ptl x}+\cos(2\kappa t)\,\frac{\ptl}{\ptl y}\right)
+\left(y\,\sin(2\kappa t)-x\,\cos(2\kappa t)\right)\frac{\ptl}{\ptl t},
\\
T:&=\frac{\ptl}{\ptl t}.
\end{align*}
On the other hand, we denote $\mathbb{M}(1)=\mathbb{S}^3$ with the planar distribution $\h=\text{Ker}(\eta)$, where $\eta:=x_1\,dy_1-y_1\,dx_1+x_2\,dy_2-y_2\,dx_1$. Here $(x_1,y_1,x_2,y_2)$ are  the Euclidean coordinates in $\rr^4$.  A basis $\{X,Y,T\}$ in the same conditions as above is given by
\begin{align*}
X:&=-x_{2}\,\frac{\ptl}{\ptl x_{1}}+y_{2}\,\frac{\ptl}{\ptl
y_{1}}+x_{1}\,\frac{\ptl}{\ptl x_{2}}-y_{1}\,\frac{\ptl}{\ptl y_{2}},
\\
Y:&=-y_{2}\,\frac{\ptl}{\ptl x_{1}}-x_{2}\,\frac{\ptl}{\ptl
y_{1}}+y_{1}\,\frac{\ptl}{\ptl x_{2}}+x_{1}\,\frac{\ptl}{\ptl y_{2}},
\\
T:&=-y_{1}\,\frac{\ptl}{\ptl x_{1}}+x_{1}\,\frac{\ptl}{\ptl y_{1}}
-y_{2}\,\frac{\ptl}{\ptl x_{2}}+x_{2}\,\frac{\ptl}{\ptl y_{2}}.
\end{align*}
In this situation we have the equalities
\begin{equation}
\label{eq:lb}
[X,Y]=-2T,\quad [X,T]=(2\kappa) Y,\quad [Y,T]=-(2\kappa) X,\quad\kappa=-1,0,1.
\end{equation}

We introduce in $\e$ the sub-Riemannian metric $g_h$ for which $\{X,Y\}$ is a positively oriented orthonormal basis at each point. The associated complex structure $J$ satisfies $J(X)=Y$ and $J(Y)=-X$. The metric $g_h$ is of \emph{bundle type} in the sense of \cite[p.~18]{montgomery}. In fact, there is a Riemannian submersion $\pi:\e\to\m$ for which $\h=(\text{Ker}(d\pi))^\bot$. Here the orthogonal complement is taken with respect to the canonical extension $g_\kappa$ of $g_h$ defined in \eqref{eq:Rm}. The map $\pi$ is  the Euclidean projection $\pi(x,y,t)=(x,y)$ if $\kappa=-1,0$, and the Hopf fibration $\pi(x_1,y_1,x_2,y_2)=(x^2_{1}+y^2_{1}-x_{2}^2-y_{2}^2,2\,(x_{2}y_{1}-x_{1}y_{2}),2\,(x_{1}x_{2}+y_{1}y_{2}))$ if $\kappa=1$. The fibers of $\pi$ are vertical geodesics in $(\e,g)$.  The diffeomorphisms associated to $T$ are translations along the fibers. As they are isometries of $g_h$ it follows that $\e$ is a Sasakian sub-Riemannian $3$-manifold. By using \eqref{eq:lb} together with the formula $K_h=4\kappa-(3/4)\escpr{[X,Y],T}^2$, see \cite[p.~187]{dcriem}, for the sectional curvature $K_h$ of the contact plane in $\e$, we deduce that $K_h=4\kappa-3$. Thus $\e$ has constant Webster scalar curvature $\kappa$ by \eqref{eq:webster}. 

\begin{remark}
In $\e$ there is a binary operation $*$ such that $(\e,*)$ is a Lie group and $\{X,Y,T\}$ is a basis of left invariant vector fields. Moreover, the canonical extension $g_\kappa$ is a left invariant metric on $\e$. The Riemannian spaces $(\e,g_\kappa)$ are important since they are examples of Thurston geometries \cite{scott} (note that, for $\kappa=1$, the metric $g_\kappa$ coincides with the standard Riemannian metric on $\mathbb{S}^3$ of constant curvature $1$). The (sub-Riemannian) isometry group $\text{Iso}(\e)$ consists of those isometries of $(\e,g_\kappa)$ that preserve the horizontal distribution. This group has dimension 4 and contains left translations and vertical Euclidean rotations.
\end{remark}

\subsection{Horizontal geometry of surfaces}
\label{subsec:surfaces}
Let $M$ be a Sasakian sub-Riemannian $3$-manifold and $\Sg$ a $C^1$ surface immersed in $M$. Since the contact plane $\h$ is a completely nonintegrable distribution it follows by Frobenius theorem that the \emph{singular set} $\Sg_0$, which consists of those points $p\in\Sg$ for which the tangent plane $T_p\Sg$ coincides with $\h_{p}$, is closed and has empty interior in $\Sg$. Hence the \emph{regular set} $\Sg-\Sg_0$ of $\Sg$ is open and dense in $\Sg$.  By using the arguments in \cite[Lem.~1]{d2}, see
also \cite[Thm.~1.2]{balogh} and \cite[Lem.~A.6]{hp2}, we can deduce that, for a $C^2$ surface $\Sg$, the Hausdorff dimension of $\Sg_{0}$ with respect to the Riemannian distance of $M$ is less than or equal to $1$.  In particular, the Riemannian area of $\Sg_{0}$ vanishes.

For a $C^1$ orientable surface $\Sg$ immersed in $M$ we denote by $N$ the unit normal vector to $\Sg$ in $(M,g)$ such that the induced orientation in $\Sg$ is compatible with the orientation of $M$ given by the $3$-form $\eta\wedge d\eta$. It is clear that
$\Sg_{0}=\{p\in\Sg;N_h(p)=0\}$, where $N_{h}=N-\escpr{N,T}T$.  In the
regular part $\Sg-\Sg_0$, we can define the \emph{horizontal Gauss
map} $\nu_h$ and the \emph{characteristic vector field} $Z$, by
\begin{equation}
\label{eq:nuh}
\nu_h:=\frac{N_h}{|N_h|}, \qquad Z=J(\nuh).
\end{equation}
As $Z$ is horizontal and orthogonal to $\nu_h$, we conclude that $Z$
is tangent to $\Sg$.  Hence $Z_{p}$ generates $T_{p}\Sg\cap\h_{p}$.
The integral curves of $Z$ in $\Sg-\Sg_0$ will be called
$(\emph{oriented}\,)$ \emph{characteristic curves} of $\Sg$.  They are
both tangent to $\Sg$ and to $\h$.  If we define
\begin{equation}
\label{eq:ese}
S:=\escpr{N,T}\,\nu_h-|N_h|\,T,
\end{equation}
then $\{Z_{p},S_{p}\}$ is an orthonormal basis of $T_p\Sg$ whenever
$p\in\Sg-\Sg_0$.  Moreover, for any $p\in\Sg-\Sg_{0}$ we have the
orthonormal basis of $T_{p}M$ given by $\{Z_{p},(\nuh)_{p},T_{p}
\}$.  From here we deduce the following identities on $\Sg-\Sg_{0}$
\begin{equation}
\label{eq:relations}
|N_{h}|^2+\escpr{N,T}^2=1, \qquad (\nu_{h})^\top=\escpr{N,T}\,S, 
\qquad T^\top=-|N_{h}|\,S,
\end{equation}
where $U^\top$ stands for the projection of a vector field $U$ onto
the tangent plane to $\Sg$.

Given a $C^1$ orientable surface $\Sg$ immersed in $M$, we
define the \emph{area} of $\Sg$ by
\begin{equation}
\label{eq:area}
A(\Sg):=\int_{\Sg}|N_{h}|\,d\Sg,
\end{equation}
where $d\Sg$ is the Riemannian area element on $\Sg$.  If $\Sg$ is a
$C^2$ surface bounding a set $\Om\subset M$ then, as a consequence of the Riemannian divergence theorem, we deduce that $A(\Sg)$ coincides with the \emph{sub-Riemannian perimeter \'a la De Giorgi} of $\Om$ defined following \cite{cdg1} by
\begin{equation}
\label{eq:per}
P(\Om):=\sup\left\{\int_\Om\divv U\,dM;\,|U|\leq 1\right\}.
\end{equation}
Here the supremum is taken over $C^1$ horizontal vector fields on $M$. In the definition above $dM$ and $\divv$ are the Riemannian volume and divergence, respectively.  

Finally, for a $C^2$ orientable surface $\Sg$, we denote by $B$ the Riemannian shape operator of $\Sg$.  It is defined, for any vector $U$ tangent to $\Sg$, by $B(U)=-D_{U}N$.  The Riemannian mean curvature of $\Sg$ is
$-2H_{R}=\divv_{\Sg}N$, where $\divv_{\Sg}$ denotes the Riemannian
divergence relative to $\Sg$.

\section{Carnot-Carath\'eodory geodesics and Jacobi fields}
\label{sec:ccgeo}

Let $M$ be a contact sub-Riemannian $3$-manifold. The planar distribution $\h$ is a bracket-generating distribution of constant step $2$ since it coincides with the kernel of a contact $1$-form. Let $\ga:I\to\stres$ be a piecewise $C^1$ curve defined on a compact interval $I\sub\rr$.  The \emph{length} of $\ga$ is the Riemannian
length $L(\ga):=\int_{I}|\dot{\ga}(\eps)|\,d\eps$, where $\dot{\ga}$ is the tangent vector of $\ga$. A \emph{horizontal curve} in $M$ is a curve $\ga$ such that $\dot{\ga}$ lies in $\h$ wherever it exists. For two given points in $M$ we can find by Chow's connectivity theorem \cite[Sect.~1.2.B] {gromov-cc} a $C^\infty$ horizontal curve joining these points.  The \emph{Carnot-Carath\'eodory distance} $d_{cc}$ between two points in $M$ is defined as the infimum of the lengths of all $C^\infty$ horizontal curves
joining the given points.  The topology associated to $d_{cc}$ is the original topology of $M$, see \cite[Cor.~2.6]{andre}. The Hausdorff dimension of $(M,d_{cc})$ is $4$ by Mitchell's measure theorem \cite[Thm.~2.17]{montgomery}.  In the metric space $(\stres,d_{cc})$ there is a natural extension for continuous curves of the notion of length, see \cite[p.~19]{andre}. A continuous curve $\ga$ joining $p,q\in M$
is \emph{length-minimizing} if $L(\ga)=d_{cc}(p,q)$.  When the
metric space $(\stres,d_{cc})$ is complete we can apply the Hopf-Rinow
theorem in sub-Riemannian geometry \cite[Thm.~2.7]{andre}, \cite[Thm.~1.19]{montgomery} to ensure the existence of absolutely continuous horizontal length-minimizing curves joining two given points.
Moreover,  any of these minimizers, suitably parameterized, is a $C^\infty$ curve satisfying the normal geodesic equations \cite[Cor.~6.2]{strichartz}, \cite[Sect.~5.6]{montgomery}. In particular, a length-minimizing curve can be parameterized by a $C^2$ critical point of length under variations by horizontal curves with fixed endpoints. We refer to these curves as \emph{Carnot-Carath\'eodory geodesics}. In this section we gather some facts about geodesics that will be helpful in the remainder of the paper. 

The geodesic equation is usually obtained by means of variational arguments, see for example \cite[Prop.~15]{rumin} and \cite[Sect.~3]{hughen}. Here we follow the point of view given in \cite[Sect.~3]{rr2} and \cite[Sect.~3]{hr1}. Let $M$ be a Sasakian sub-Riemannian $3$-manifold. By a \emph{CC-geodesic} of curvature $\lambda$ we mean a $C^2$ horizontal curve $\gamma$, parameterized by arc-length, and satisfying the  equation
\begin{equation}
\label{eq:geoeq}
\dot{\ga}'+2\la\,J(\dot{\ga})=0,
\end{equation}
where the prime $'$ stands for the Riemannian covariant derivative along $\ga$. In particular, CC-geodesics of vanishing curvature are horizontal Riemannian geodesics in $(M,g)$. It can be proved as in \cite[Prop.~3.1]{rr2} that a CC-geodesic is a stationary point of the length functional associated to any variation by horizontal curves with the same endpoints. Moreover, any sufficiently short arc of a CC-geodesic is length-minimizing \cite[Thm.~1.14]{montgomery}. If $p\in M$ and $v$ is a unit vector in $\h_p$ then the unique solution $\ga$ to \eqref{eq:geoeq} with $\ga(0)=p$ and $\dot{\ga}(0)=v$ is a CC-geodesic of curvature $\la$ since the functions $\escpr{\dot{\ga},T}$ and $|\dot{\ga}|^2$ are constant along $\ga$. If $M$ is complete then the CC-geodesics are defined on the whole real line by \cite[Thm.~1.2]{falbel4}.

The next lemma is a characterization of the CC-geodesics in a Sasakian $3$-manifold whose sub-Riemannian structure is of bundle type. 

\begin{lemma}
\label{lem:geofunction}
Let $\ga$ be a $C^2$ horizontal curve parameterized by
arc-length inside a Sasakian sub-Riemannian $3$-manifold $M$. Suppose that $\pi:M\to E$ is a Riemannian submersion onto a Riemannian surface $E$ such that $\h=(\emph{Ker}(d\pi))^\bot$. Then $\ga$ is a $CC$-geodesic of curvature $\la$ if and only if the curve $\alpha:=\pi\circ\ga$ has constant geodesic curvature $h=2\la$ in $E$ with respect to the unit normal $n:=-(d\pi)(J(\dot{\ga}))$.
\end{lemma}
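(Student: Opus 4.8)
The plan is to convert the sub-Riemannian geodesic equation \eqref{eq:geoeq} into a statement about the geodesic curvature of $\alpha=\pi\circ\ga$, using two ingredients: that $\pi$ restricts to a fibrewise isometry on $\h$, and that the Sasakian identity \eqref{eq:dut}, $D_UT=J(U)$, forces the acceleration of a horizontal arc-length curve to stay horizontal.

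First I would set up the adapted frames. Since $\ga$ is horizontal and $\pi$ is a Riemannian submersion with $\h=(\text{Ker}(d\pi))^\bot$, the differential $d\pi$ restricts to a linear isometry from each $\h_p$ onto $T_{\pi(p)}E$. Hence $\dot\alpha=(d\pi)(\dot\ga)$ is a unit vector and $\alpha$ is parameterized by arc-length. Because $J$ is the $90$-degree rotation on $\h$, the pair $\{\dot\ga,J(\dot\ga)\}$ is a positively oriented orthonormal basis of $\h$ along $\ga$; applying the isometry $d\pi$, its image $\{\dot\alpha,-n\}$, with $n=-(d\pi)(J(\dot\ga))$, is an orthonormal basis of $T_{\alpha}E$, and in particular $n$ is a unit normal to $\alpha$.

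Next I would pin down the acceleration $\dot\ga'=D_{\dot\ga}\dot\ga$. Differentiating $\escpr{\dot\ga,T}=0$ and using \eqref{eq:dut} together with $\escpr{\dot\ga,J(\dot\ga)}=0$ gives $\escpr{\dot\ga',T}=0$, so $\dot\ga'$ is horizontal; differentiating $|\dot\ga|^2=1$ gives $\escpr{\dot\ga',\dot\ga}=0$. Thus $\dot\ga'$ is a multiple of $J(\dot\ga)$, namely $\dot\ga'=\escpr{\dot\ga',J(\dot\ga)}\,J(\dot\ga)$. I would then invoke the O'Neill relation for Riemannian submersions: along a horizontal curve the vertical components are annihilated by $d\pi$, so $(d\pi)(\dot\ga')=\nabla^E_{\dot\alpha}\dot\alpha$, where $\nabla^E$ denotes the Levi-Civita connection of $E$. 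Since $\alpha$ is arc-length, its signed geodesic curvature with respect to $n$ is
\[
h=\escpr{\nabla^E_{\dot\alpha}\dot\alpha,n}=\escpr{(d\pi)(\dot\ga'),-(d\pi)(J(\dot\ga))}=-\escpr{\dot\ga',J(\dot\ga)},
\]
where the last equality uses that $d\pi$ is an isometry on $\h$ and that both $\dot\ga'$ and $J(\dot\ga)$ are horizontal. Combining with the previous display yields $\dot\ga'=-h\,J(\dot\ga)$, hence
\[
\dot\ga'+2\la\,J(\dot\ga)=(2\la-h)\,J(\dot\ga).
\]
Therefore $\ga$ satisfies \eqref{eq:geoeq} if and only if $h\equiv 2\la$, that is, $\alpha$ has constant geodesic curvature $h=2\la$, which is precisely the assertion.

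I expect the main obstacle to be the submersion identity $(d\pi)(\dot\ga')=\nabla^E_{\dot\alpha}\dot\alpha$. It is a standard consequence of O'Neill's formulas, but some care is required because $\dot\ga'$ is a covariant derivative \emph{along the curve} rather than of a globally defined field: one justifies it by extending $\dot\alpha$ to a vector field on $E$, taking its basic horizontal lift, observing that this lift agrees with $\dot\ga$ along $\ga$, and using that the covariant derivatives along the curve coincide with the ambient ones. Granting this, everything else reduces to the elementary frame computation and the horizontality of $\dot\ga'$ established above.
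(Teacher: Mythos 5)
Your proposal is correct and follows essentially the same route as the paper: identify $d\pi$ as an isometry on $\h$, observe that $\dot\ga'$ is horizontal and proportional to $J(\dot\ga)$ (using \eqref{eq:dut}), invoke the submersion identity $\dot\alpha'=(d\pi)(\dot\ga')$ (the paper cites \cite[p.~186]{dcriem} for this), and compute $h=-\escpr{\dot\ga',J(\dot\ga)}$ to compare with \eqref{eq:geoeq}. The only difference is that you spell out the horizontality of $\dot\ga'$ and the justification of the O'Neill identity in more detail than the paper does.
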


\begin{proof}
By \eqref{eq:conmute} and \eqref{eq:modj} we have that $\{\dot{\ga},J(\dot{\ga})\}$ is an orthonormal basis of the contact plane of $M$ along $\ga$. Hence $\{\dot{\alpha},n\}$ is an orthonormal basis of the tangent plane to $E$ along $\alpha$. We denote by primes $'$ the Riemannian covariant derivatives along $\gamma$ and $\alpha$. The vector field $\dot{\ga}'$ is proportional to $J(\dot{\ga})$ since $\escpr{\dot{\ga},T}=0$ and $|\dot{\ga}|^2=1$. By \cite[p.~186]{dcriem} we deduce that $\dot{\alpha}'=(d\pi)(\dot{\ga}')$. Therefore, we get
\[
h=\escpr{\dot{\alpha}',n}=-\escpr{\dot{\ga}',J(\dot{\ga})}.
\]
The claim follows by taking into account the previous equality and equation \eqref{eq:geoeq}.
\end{proof}

By using Lemma~\ref{lem:geofunction} or \cite[Thm.~1.26]{montgomery} we can compute  the CC-geodesics inside the sub-Riemannian space forms $\e$ introduced in Section~\ref{subsec:sf}. In the first Heisenberg group $\mathbb{M}(0)$ it is well known that a complete CC-geodesic is either a horizontal straight line or a Euclidean helix of vertical axis \cite{montgomery}, \cite{survey}, \cite[Sect.~3]{rr2}. In the sub-Riemannian $3$-sphere $\mathbb{M}(1)$, depending on an arithmetic condition involving $\la$, all the complete CC-geodesics of curvature $\la$ are either closed circles of the same length, or they parameterize dense subsets of a Clifford torus \cite[Prop.~3.3]{hr1}. Explicit expressions for the CC-geodesics in the special linear group $\text{SL}(2,\rr)$, which is a non-simply connected sub-Riemannian hyperbolic model, can be found in \cite[Sect.~7]{markina} and \cite[Sect.~3.3]{boscain-rossi}. For our purposes in Section~\ref{sec:main} we shall only need the following fact about CC-geodesics in $\e$.

\begin{lemma}
\label{lem:geomodel}
In a model space $\e$ all the complete CC-geodesics of the same curvature are either injective curves defined on $\rr$, or closed circles with the same length.
\end{lemma}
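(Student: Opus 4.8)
The plan is to reduce the statement, via Lemma~\ref{lem:geofunction}, to the classical classification of constant-curvature curves on the base surface $\m$, after first using homogeneity to make the alternative uniform across all CC-geodesics of a given curvature. I would begin by recording, as stated in Section~\ref{subsec:sf}, that $\text{Iso}(\e)$ has dimension $4$ and contains the left translations of the group structure together with the vertical rotations fixing a point. The translations act transitively on $\e$, while at a fixed point the rotations act transitively on unit horizontal vectors; hence $\text{Iso}(\e)$ acts transitively on the unit horizontal tangent bundle. Since isometries preserve $J$ and the connection $D$, they carry solutions of \eqref{eq:geoeq} to solutions of the same curvature, so any two complete CC-geodesics of curvature $\la$ are congruent. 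It therefore suffices to analyze a single one:\ if it is injective on $\rr$ then so are all the others, and if it is closed then all of them close with the same length.

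Next I would project through the Riemannian submersion $\pi:\e\to\m$, whose fibers are the integral curves of $T$. By Lemma~\ref{lem:geofunction} the projected curve $\alpha:=\pi\circ\ga$ is a complete curve of constant geodesic curvature $h=2\la$ in $\m$, a simply connected surface of constant curvature $4\kappa$. The classical description of such curves supplies what I need:\ for $\kappa=1$ every $\alpha$ is a closed circle; for $\kappa=0$, $\alpha$ is a straight line if $\la=0$ and a Euclidean circle if $\la\neq 0$; and for $\kappa=-1$, $\alpha$ is a geodesic, a hypercycle, or a horocycle when $\la^2\leq 1$ (all injective and non-closed) and a closed circle when $\la^2>1$.

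I would then transfer this dichotomy to $\ga$ through the horizontality condition $\eta(\dot\ga)=0$. When $\alpha$ is injective, so is $\ga$, and completeness forces $\ga$ to be defined on all of $\rr$. When $\alpha$ is closed with least period $L_0$, integrating $\eta(\dot\ga)=0$ around $\alpha$ shows that $\ga(\eps+L_0)$ is a fixed vertical translate of $\ga(\eps)$ by a holonomy $\Delta t$, equal, by Stokes' theorem and the fact that $d\eta$ restricts on $\h$ to the area form, to a nonzero multiple of the area enclosed by $\alpha$ in $\m$. A coincidence $\ga(\eps_1)=\ga(\eps_2)$ then forces $\eps_2-\eps_1=nL_0$ together with $n\,\Delta t\equiv 0$ in the fiber. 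For $\kappa\leq 0$ the fiber is $\rr$ and $\Delta t\neq 0$, so $\ga$ is injective and $\e$ has no closed CC-geodesics. For $\kappa=1$ the fiber is a circle of fixed length $\ell$, and $\ga$ is closed precisely when $\Delta t/\ell\in\mathbb{Q}$, in which case it closes after a fixed number of periods, and is otherwise injective and dense in the Clifford torus $\pi^{-1}(\alpha)$.

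The main obstacle lies in this last, arithmetic, step for $\kappa=1$:\ one must verify that whether $\Delta t/\ell$ is rational depends only on $\la$, so that the alternative is genuinely the same for every CC-geodesic of that curvature, and that when they close their common length is determined by $\la$. Both points are delivered by the congruence established in the first paragraph, since $\Delta t$, $L_0$ and $\ell$ are isometry invariants depending only on $\la$; this is what reduces the delicate rationality analysis to the single computation carried out in \cite[Prop.~3.3]{hr1}.
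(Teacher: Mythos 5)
Your proof is correct, but it is organized quite differently from the paper's. The paper disposes of $\kappa=0$ and $\kappa=1$ by citing the explicit descriptions of CC-geodesics recalled just before the lemma (horizontal lines and helices in $\mathbb{M}(0)$; \cite[Prop.~3.3]{hr1} for $\mathbb{M}(1)$), so that only $\mathbb{M}(-1)$ needs an argument; there it projects to $\mathbb{N}(-1)$ exactly as you do, but in the only nontrivial case (the projection is a circle) it normalizes the circle to be centered at the origin and checks by a direct coordinate computation from $\dot{t}=\rho\,(\dot{x}y-x\dot{y})$ that $t(s)$ is strictly monotone, so the lift is an injective helix. You replace both the citations and the coordinate computation by two general mechanisms: transitivity of $\text{Iso}(\e)$ on the unit horizontal bundle, which makes all CC-geodesics of a fixed curvature congruent and hence makes the injective/closed alternative (and the common length in the closed case) automatically uniform --- in particular the rationality analysis of \cite[Prop.~3.3]{hr1} is not actually needed for this statement, only the dichotomy is; and the holonomy of the fibration, computed via Stokes from $\eta(\dot\ga)=0$ as a nonzero multiple of the area enclosed by the projected circle, which rules out self-intersections when the fiber is $\rr$ and yields the closed-or-injective dichotomy when the fiber is a circle. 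Your route is longer but self-contained and treats the three models uniformly; the paper's is shorter because it leans on already-established classifications. Two small points you should make explicit: the left translations and vertical rotations preserve the orientation of $\h$ and hence $J$ (an orientation-reversing isometry would send curvature $\la$ to $-\la$), and the holonomy of the closed base curve is the same vertical translation regardless of the basepoint on the loop, because the fiber group is abelian and the flow of $T$ preserves $\h$.
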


\begin{proof}
By the notes previous to the lemma it suffices to see that a complete geodesic $\ga:\rr\to\mathbb{M}(-1)$ is an injective curve. Let $\pi:\mathbb{M}(-1)\to\mathbb{N}(-1)$ be the Riemannian submersion given by $\pi(x,y,t)=(x,y)$. We know by
Lemma~\ref{lem:geofunction} that $\alpha:=\pi\circ\ga$ is a complete curve with constant geodesic curvature in $\mathbb{N}(-1)$. If $\alpha$ is either an open arc of a circle, or a line approaching $\ptl\mathbb{N}(-1)$, then the claim is clear. Suppose that $\alpha$ is a circle contained in $\mathbb{N}(-1)$. After an isometry of $\mathbb{M}(-1)$ we may assume that the circle in centered at the origin. The fact that $\ga(s)=(x(s),y(s),t(s))$ is horizontal is equivalent to the equation $\dot{t}=\rho\,(\dot{x}y-x\dot{y})$ along $\ga$, where $\rho(x,y)=(1-x^2-y^2)^{-1}$. An easy computation shows that $t(s)$ is a strictly monotonic function, and that $\ga$ is a Euclidean helix about the $t$-axis.    
\end{proof} 

It was proved in \cite[Lem.~3.5]{rr2} that the infinitesimal vector field
associated to a $C^2$ variation by CC-geodesics of the same curvature in the first Heisenberg group satisfies a second order differential equation similar to the classical Jacobi equation in Riemannian geometry.  Next we obtain the same result in a more general context, and under weaker regularity conditions.

\begin{lemma}
\label{lem:ccjacobi}
Let $M$ be a Sasakian sub-Riemannian $3$-manifold. Consider a $C^1$ curve $\alpha:I\to M$ defined on some open interval $I\subseteq\rr$, and a $C^1$ unit horizontal vector field $U$ along $\alpha$. For fixed $\lambda\in\rr$, suppose that we have a well-defined map $F:I\times I'\to M$ given by $F(\eps,s):=\ga_{\eps}(s)$, where $I'$ is an open interval containing the origin, and $\ga_{\eps}(s)$ is the CC-geodesic of curvature $\la$ with initial conditions $\ga_{\eps}(0)=\alpha(\eps)$ and $\dot{\ga}_{\eps}(0)=U(\eps)$. Then, the vector field $V_{\eps}(s):=(\ptl F/\ptl\eps)(\eps,s)$ satisfies the following properties:
\begin{itemize}
\item[(i)] $V_\eps$ is $C^\infty$ along $\ga_\eps$ with $[\dot{\ga}_\eps,V_\eps]=0$,
\item[(ii)] the function $\la\,\escpr{V_\eps,T}+\escpr{V_\eps,\dot{\ga}_\eps}$
is constant along $\ga_\eps$, 
\item[(iii)] $V_\eps$ satisfies the $CC$-Jacobi equation
\[
V_{\eps}''+R(\dot{\ga}_{\eps},V_{\eps})\dot{\ga}_{\eps}
+2\la\,\big(J(V'_{\eps})-\escpr{V_{\eps},\dot{\ga}_{\eps}}\,T\big)=0, 
\]
where $R$ is the Riemannian curvature tensor in $(M,g)$ and the prime $'$ denotes the covariant derivative along the geodesic $\ga_{\eps}$,  
\item[(iv)] the vertical component of $V_\eps$ satisfies the differential equation
\[
\escpr{V_\eps,T}'''+4(\lambda^2+K)\,\escpr{V_\eps,T}'=0,
\]
where $K$ is the Webster scalar curvature of $M$ and $'$ is the derivative with respect to $s$.  
\end{itemize}
\end{lemma}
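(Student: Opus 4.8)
The plan is to fix $\eps$ and drop it from the notation, writing $\ga=\ga_\eps$, $V=V_\eps$, and denoting by $'$ the covariant derivative $D_{\dot\ga}$ along $\ga$. Since a CC-geodesic of curvature $\la$ is the solution of the smooth second order system \eqref{eq:geoeq}, its dependence on the initial point and velocity is $C^\infty$, so there is a $C^\infty$ flow $\Psi$ with $F(\eps,s)=\Psi(s,\alpha(\eps),U(\eps))$. For (i), the smoothness of $V$ along $\ga$ follows because $\Psi$ is $C^\infty$ in $s$: for fixed $\eps$ one has $V(s)=(D_p\Psi)\,\alpha'(\eps)+(D_v\Psi)\,U'(\eps)$, which is $C^\infty$ in $s$, the only loss of regularity (from the merely $C^1$ data $\alpha,U$) occurring in the $\eps$ variable. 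The identity $[\dot\ga,V]=0$ expresses that the push-forwards under $F$ of the commuting coordinate fields $\ptl/\ptl s$ and $\ptl/\ptl\eps$ commute; I would prove it by testing against a smooth function $f$ and checking $\ptl_s\ptl_\eps(f\circ F)=\ptl_\eps\ptl_s(f\circ F)$, which reduces to the equality of mixed partials of the smooth function $f\circ\Psi$ in $(s,p,v)$, the factors $\alpha'(\eps),U'(\eps)$ entering only as continuous multipliers.

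For (ii), I would differentiate $\la\,\escpr{V,T}+\escpr{V,\dot\ga}$ in $s$. Using (i) and the torsion-free symmetry $V'=D_{\dot\ga}V=\tfrac{D}{\ptl\eps}\dot\ga$, the normalization $|\dot\ga_\eps|^2=1$ gives $\escpr{V',\dot\ga}=0$, while $\escpr{\dot\ga_\eps,T}=0$ together with \eqref{eq:dut} and the skew-symmetry \eqref{eq:conmute} of $J$ gives $\escpr{V',T}=\escpr{V,J(\dot\ga)}$. Combining these with $T'=J(\dot\ga)$ from \eqref{eq:dut} and $\dot\ga'=-2\la\,J(\dot\ga)$ from \eqref{eq:geoeq}, the derivative cancels to zero. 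For (iii), I would apply $\tfrac{D}{\ptl\eps}$ to the geodesic equation $\tfrac{D}{\ptl s}\ptl_s F+2\la\,J(\ptl_s F)=0$, valid for every $\eps$. With $[\ptl_s F,\ptl_\eps F]=0$ and in the curvature convention of the paper (which is the one in \cite{dcriem}, so the commutation lemma for covariant derivatives along $F$ applies with a plus sign), the term $\tfrac{D}{\ptl\eps}\tfrac{D}{\ptl s}\ptl_s F$ becomes $V''+R(\dot\ga,V)\dot\ga$ after the symmetry lemma $\tfrac{D}{\ptl\eps}\ptl_s F=V'$; the term $\tfrac{D}{\ptl\eps}J(\dot\ga)$ is computed from \eqref{eq:dujv}, using $\escpr{\dot\ga,T}=0$, to equal $J(V')-\escpr{V,\dot\ga}\,T$. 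This yields exactly the CC-Jacobi equation.

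For (iv), writing $f=\escpr{V,T}$, I would compute $f',f'',f'''$ successively. The needed derivatives of the moving frame come from \eqref{eq:dut} and \eqref{eq:dujv}, giving $T'=J(\dot\ga)$ and $(J(\dot\ga))'=2\la\,\dot\ga-T$. Together with $\escpr{V',\dot\ga}=0$ and $\escpr{V',T}=\escpr{V,J(\dot\ga)}=\tfrac12 f'$ from (ii), and the relation $\escpr{V,\dot\ga}=c-\la f$ for a constant $c$ coming from (ii), this expresses $f'''$ as a combination of $f'$ and of $\escpr{V'',T}$ and $\escpr{V'',J(\dot\ga)}$. These last two I would read off from the Jacobi equation of (iii) by pairing with $T$ and with $J(\dot\ga)$, evaluating the curvature term by Lemma~\ref{lem:ruvu}, namely $R(\dot\ga,V)\dot\ga=(4K-3)\,\escpr{V,J(\dot\ga)}\,J(\dot\ga)+\escpr{V,T}\,T$. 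Collecting the resulting coefficients of $f'$ gives $f'''=-4(\la^2+K)f'$, which is the claimed equation.

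The genuinely delicate point I expect is the regularity bookkeeping in (i): justifying $[\dot\ga,V]=0$ when $\alpha$ and $U$ are only $C^1$, so that $F$ is merely $C^1$ in $\eps$ and one cannot directly invoke joint smoothness of $F$; the factorization through the smooth flow $\Psi$ is what makes the equality of mixed partials legitimate. The computations in (ii)--(iv) are essentially mechanical once the structure equations \eqref{eq:dut}, \eqref{eq:dujv}, \eqref{eq:jj} and \eqref{eq:conmute} and Lemma~\ref{lem:ruvu} are applied consistently; the only place demanding care is keeping track of the curvature convention of the paper throughout (iii) and (iv).
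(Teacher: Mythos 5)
Your proposal is correct and follows essentially the same route as the paper: smoothness and $[\dot{\ga}_\eps,V_\eps]=0$ are obtained by factoring $F$ through the $C^\infty$ geodesic flow on the horizontal bundle and applying Schwarz's theorem to the mixed partials; (ii) and (iii) come from the symmetry lemma and from differentiating the geodesic equation in $\eps$ with do Carmo's curvature convention, exactly as in the argument the paper imports from \cite{rr2}; and (iv) is the same successive differentiation of $\escpr{V,T}$ using $T'=J(\dot{\ga})$, $J(\dot{\ga})'=2\la\dot{\ga}-T$, statement (ii), and Lemma~\ref{lem:ruvu}. No gaps.
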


\begin{proof}
For fixed $\lambda\in\rr$  the unique solution of \eqref{eq:geoeq} with initial conditions $p\in M$ and $v\in\h_p$ depends differentiably ($C^\infty$) on the initial data. Hence the geodesic flow $\mathcal{F}$ associated to CC-geodesics of curvature $\la$ is a $C^\infty$ map on the horizontal bundle of $M$.  It follows that $F$ is $C^1$ on $I\times I'$ and $C^\infty$ with respect to $s$. That $(\ptl F/\ptl \eps)(\eps,s)$ is $C^\infty$ with respect to $s$ comes from the differentiability of $\mathcal{F}$. Moreover, for any $k\in\mathbb{N}$ and any component $\phi$ of $F$ in local coordinates, the derivatives
$\ptl^{k+1}\phi/\ptl s^k\,\ptl\eps$ and $\ptl^k\phi/\ptl s^k$ exist
and are continuous functions on $I\times I'$. In particular, an application of the Schwarz's theorem implies that $\ptl^2\phi/\ptl\eps\ptl
s=\ptl^2\phi/\ptl s\ptl\eps$ and $\ptl^3\phi/\ptl
s^2\ptl\eps=\ptl^3\phi/\ptl s\ptl\eps\ptl s=\ptl^3\phi/\ptl\eps\ptl
s^2$. Having this in mind the proofs in \cite[p.~68 and p.~98]{dcriem} can be
traced to get $[\dot{\ga}_{\eps},V_{\eps}]=0$ and
$R(\dot{\ga}_{\eps},V_{\eps})\dot{\ga}_{\eps}=D_{V_{\eps}}\dot{\ga}'_{\eps}
-V_\eps''$. Now we can prove statements (ii) and (iii) following the arguments in \cite[Lem. ~3.4 and Lem.~3.5]{rr2} together with equalities \eqref{eq:dut}, \eqref{eq:conmute}, \eqref{eq:geoeq} and \eqref{eq:dujv}. 

Finally we prove (iv). For simplicity we avoid the subscript $\eps$ in the computations below.  Note that
\[
\escpr{V',T}=\escpr{D_{\dot{\ga}}V,T}=\escpr{D_{V}\dot{\ga},T}=
-\escpr{\dot{\ga},J(V)}=\escpr{V,J(\dot{\ga})},
\]
where we have used $[\dot{\ga},V]=0$, \eqref{eq:dut} and  \eqref{eq:conmute}. As $T'=J(\dot{\ga})$, we deduce
\begin{equation}
\label{eq:prima1}
\escpr{V,T}'=\escpr{V',T}+\escpr{V,T'}=2\escpr{V,J(\dot{\ga})}.
\end{equation}
On the other hand, an application of \eqref{eq:dujv}, \eqref{eq:geoeq} and 
\eqref{eq:jj} yields
\[
J(\dot{\ga})'=D_{\dot{\ga}}(J(\dot{\ga}))=J(\dot{\ga}')-T=2\la\dot{\ga}-T,
\]
and so, if we differentiate in \eqref{eq:prima1}, then we obtain
\begin{equation}
\label{eq:prima21}
\escpr{V,T}''=2\left(\escpr{V',J(\dot{\ga})}+2\la\,\escpr{V,\dot{\ga}}-\escpr{V,T}\right).
\end{equation}
Now we differentiate in \eqref{eq:prima21} to get
\[
(1/2)\escpr{V,T}'''=\escpr{V'',J(\dot{\ga})}+\escpr{V',J(\dot{\ga})'}+2\la\,\escpr{V,\dot{\ga}}'-\escpr{V,T}'.
\]
By taking into account \eqref{eq:prima1} together with statements (iii) and (ii), we deduce
\begin{align*}
(1/2)\escpr{V,T}'''&=-\escpr{R(\dot{\ga},V)\dot{\ga},J(\dot{\ga})}-2\la\escpr{J(V'),J(\dot{\ga})}
+2\la\,\escpr{V',\dot{\ga}}-\escpr{V',T}
\\
&\quad\,-4\la^2\,\escpr{V,J(\dot{\ga})}-2\escpr{V,J(\dot{\ga})}
\\
&=(3-4K)\,\escpr{V,J(\dot{\ga})}-4\la^2\,\escpr{V,J(\dot{\ga})}-3\escpr{V,J(\dot{\ga})}
\\
&=-4(\la^2+K)\,\escpr{V,J(\dot{\ga})}=-2(\la^2+K)\,\escpr{V,T}'.
\end{align*}
To obtain the second equality we have used Lemma~\ref{lem:ruvu} and formula \eqref{eq:conmute}.
\end{proof}

If $\ga$ is a CC-geodesic in $M$ then we define a \emph{CC-Jacobi field} along $\ga$ as a solution to the CC-Jacobi equation in Lemma~\ref{lem:ccjacobi} (iii). If the Webster scalar curvature of $M$ is constant along $\ga$ then, an easy integration from Lemma~\ref{lem:ccjacobi} (iv), gives us the following result.

\begin{lemma}
\label{lem:jacobicon}
Let $\ga(s)$ be a $CC$-geodesic of curvature $\la$ in a Sasakian sub-Riemannian $3$-manifold $M$. Let $V$ be the $CC$-Jacobi field associated to a variation of $\ga$ by $CC$-geodesics of curvature $\la$ as in Lemma~\ref{lem:ccjacobi}. Suppose that the Webster scalar curvature $K$ of $M$ is constant along $\ga$. If we denote $\mu:=4(\la^2+K)$ and $f(s):=\escpr{V,T}(s)$, then we have
\begin{itemize}
\item[(i)] For $\mu<0$, 
$$f(s)=\frac{1}{\sqrt{-\mu}}\left(a\sinh(\sqrt{-\mu}\,s)+b\cosh(\sqrt{-\mu}\,s)\right)+c,$$
 where $a=f'(0)$, $b=(1/\sqrt{-\mu})f''(0)$ and $c=f(0)+(1/\mu)f''(0)$.
\item[(ii)] For $\mu=0$, 
$$f(s)=as^2+bs+c,$$
where $a=(1/2)f''(0)$, $b=f'(0)$ and $c=f(0)$.
\item[(iii)] For $\mu>0$,   
$$f(s)=\frac{1}{\sqrt{\mu}}\big(a\sin(\sqrt{\mu}\,s)-b\cos(\sqrt{\mu}\,s)\big)+c,$$
where $a=f'(0)$, $b=(1/\sqrt{\mu})f''(0)$ and $c=f(0)+(1/\mu)f''(0)$.
\end{itemize}
\end{lemma}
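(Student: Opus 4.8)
The starting point is Lemma~\ref{lem:ccjacobi}(iv), which tells us that the function $f(s)=\escpr{V,T}(s)$ satisfies the third order linear ODE
\[
f'''+4(\la^2+K)\,f'=0
\]
along $\ga$, where the primes denote derivatives with respect to the arc-length parameter $s$. The crucial observation is that the hypothesis that $K$ is constant along $\ga$ makes the coefficient $\mu=4(\la^2+K)$ a genuine constant, so that we are dealing with a linear constant-coefficient equation that can be integrated explicitly.

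The plan is to reduce the order by setting $g:=f'$, so that $g$ solves the second order equation $g''+\mu\,g=0$. The general solution of this reduced equation is elementary and splits according to the sign of $\mu$: a linear combination of $\cosh(\sqrt{-\mu}\,s)$ and $\sinh(\sqrt{-\mu}\,s)$ when $\mu<0$; an affine function of $s$ when $\mu=0$; and a linear combination of $\cos(\sqrt{\mu}\,s)$ and $\sin(\sqrt{\mu}\,s)$ when $\mu>0$. In each of the three cases I would then recover $f$ by integrating $g$ once, which introduces an additive constant of integration.

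It remains to express the three free constants in terms of the initial data $f(0)$, $f'(0)$ and $f''(0)$. Here I would use the relations $f'(0)=g(0)$ and $f''(0)=g'(0)$ to fix the coefficients of the two independent solutions of the reduced equation, and then evaluate the integrated expression at $s=0$ to pin down the additive constant $c$; a short computation in each case yields exactly the values of $a$, $b$ and $c$ listed in (i)--(iii). There is no genuine obstacle in this argument: the only points requiring care are the bookkeeping of the constants of integration, so that the closed forms match the normalization adopted in the statement, and the observation that the constancy of $K$ along $\ga$ is precisely what guarantees the constant-coefficient structure needed to integrate in closed form.
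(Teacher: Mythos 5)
Your proposal is correct and is exactly the "easy integration" of the constant-coefficient equation $f'''+\mu f'=0$ from Lemma~\ref{lem:ccjacobi}~(iv) that the paper itself invokes (the paper gives no further detail), and the constants you recover from $f(0)$, $f'(0)$, $f''(0)$ match those in (i)--(iii).
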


\begin{remark}
\label{re:mm}
Previous work on the second variation of length and CC-Jacobi fields in contact sub-Riemannian $3$-manifolds is found in \cite{hughen} and \cite{rumin}. The map $F$ in Lemma~\ref{lem:ccjacobi} provides a family of CC-geodesics of the same curvature. The CC-Jacobi equation for arbitrary $C^2$ variations by horizontal curves of a given CC-geodesic is obtained in \cite[Sect.~3.2]{hughen} and \cite[Lem.~1.4]{chanillo-yang}. The differential equation in Lemma~\ref{lem:ccjacobi} (iv) appears in \cite[Lem.~1.5]{chanillo-yang} when the authors study the CC-Jacobi fields associated to $C^2$ variations by CC-geodesics leaving from the same point  and such that either the curvature or the initial velocity are variable. In \cite[Lem.~2.1]{chanillo-yang} there is an explicit computation of the vertical components of such CC-Jacobi fields when $K$ is constant along the geodesic. Recently Ritor\'e  \cite{posreach} has also used CC-Jacobi fields to study curvature measures in the Heisenberg groups of higher dimension.
\end{remark}

\section{Area-stationary surfaces in Sasakian $3$-manifolds}
\label{sec:1ndvar}

In this section we introduce the variational background that we will follow in the paper. We also study critical surfaces for the area functional \eqref{eq:area} with or without a volume constraint.

Let $M$ be a Sasakian sub-Riemannian $3$-manifold and $\Sg$ a $C^2$ orientable surface immersed in $M$. Recall that $N$ denotes the unit normal vector to $\Sg$ in $(M,g)$ such that the induced orientation in $\Sg$ is compatible with the orientation of $M$ given by the $3$-form $\eta\wedge d\eta$.

By a \emph{variation} of $\Sg$ we mean a $C^1$ map
$\varphi:I\times\Sg\to M$, where $I\subeq\rr$ is an open interval
containing the origin, and $\varphi$ satisfies:
\begin{itemize}
\item[(i)] $\var(0,p)=p$ for any $p\in\Sg$,
\item[(ii)] the set $\Sg_{s}:=\{\varphi(s,p);\,p\in\Sg\}$ is a $C^1$ 
surface immersed in $M$ for any $s\in I$, 
\item[(iii)] the map $\varphi_{s}:\Sg\to\Sg_{s}$ given
by $\varphi_{s}(p):=\varphi(s,p)$ is a diffeomorphism for any $s\in I$,
\item[(iv)] $\varphi$ is compactly supported: there is a compact set $C\subseteq\Sg$ such that $\varphi_{s}(p)=p$ for any $s\in I$ and any $p\in\Sg-C$.
\end{itemize}
The area functional associated to the variation is $A(s):=A(\Sg_{s})$. If $\Sg$ is embedded and encloses a region $\Om\sub M$ then the variation produces a family $\Om_s\sub M$ such that $\Om_0=\Om$ and $\ptl\Om_s=\Sg_s$ for any $s\in I$. In this situation the volume functional is given by $V(s):=V(\Om_s)$, where $V(\cdot)$ stands for the Riemannian volume in $(M,g)$. In general, for an immersed surface $\Sg$, we define the volume functional as in \cite[Sect.~2]{bdce}, by
\[
V(s)=\int_{[0,s]\times C}\varphi^*(dM),
\]
which represents the signed volume enclosed between $\Sg$ and $\Sg_s$.
We say that the variation is \emph{volume preserving} if $V(s)$ is
constant for any $s$ small enough. We say that $\Sg$ is area-stationary if $A'(0)=0$ for any variation of $\Sg$. We say that $\Sg$ is
\emph{volume-preserving area-stationary} or \emph{area-stationary
under a volume constraint} if $A'(0)=0$ for any volume-preserving
variation of $\Sg$. 

Now we describe the main variations that will be used in the paper. Let $\om:I\times\Sg\to\rr$ be a $C^1$ function defined on the product of an open interval $I$ containing the origin and a $C^2$ surface $\Sg$. We set  $(\ptl\om/\ptl p)(s,p)=(\nabla_{\Sg}\,\om_{s})_{p}$, where $\om_{s}(p)=\om(s,p)$ and $\nabla_{\Sg}$ is the gradient relative to $\Sg$ in $(M,g)$. For fixed $p\in\Sg$, the map $s\to (\ptl\om/\ptl p)(s,p)$ is a continuous curve in the tangent plane $T_p\Sg$. If this curve is of class $C^k$ then we denote the derivative of order $k$ at $s$ by $(\ptl^{k+1}\om/\ptl s^k\ptl p)(s,p)$. We suppose that $\om$ satisfies the following properties:
\begin{align}
\label{eq:omega1}
&\om(0,p)=0, \text{ for any } p\in\Sg,
\\
\label{eq:omega2}
&\ptl^2\om/\ptl s^2 \text{ exists and it is continuous on } 
I\times\Sg,
\\
\label{eq:omega3}
&\ptl^{k+1}\om/\ptl s^k\ptl p \, \text{ exists and it is continuous 
on } I\times\Sg  \text{ for } k=1,2, 
\\
\label{eq:omega4}
&\text{there is a compact set } C\subseteq\Sg \, \text{ such that } 
\om(s,p)=0, \text{ for any } s\in I \text{ and any } p\in\Sg-C.
\end{align}
Under the conditions above we can apply Schwarz's theorem as in the proof of Lemma~\ref{lem:ccjacobi} to deduce that the functions $u(p):=(\ptl\om/\ptl s)(0,p)$ and $w(p):=(\ptl^2\om/\ptl
s^2)(0,p)$ belong to $C^1_{0}(\Sg)$.  Given a $C^1$ vector field $Q$
on $\Sg$ we define, for $s\in I$ small enough, the map
\begin{equation}
\label{eq:newvar}
\varphi_{s}(p):=\exp_{p}(\omega(s,p)\,Q_{p}),
\end{equation}
where $\exp_{p}$ is the
exponential map of $(M,g)$ at $p$.  This is a variation of $\Sg$.  In fact, if $c_{p}$ denotes the geodesic in $(M,g)$
with $c_{p}(0)=p$ and $\dot{c}_{p}(0)=Q_{p}$ then it is clear that
$\varphi_{s}(p)=c_{p}(\om(s,p))$. Hence the associated
velocity and acceleration vectors of the variation are the $C^1$ vector fields with compact support on $\Sg$ given by
\[
U_p=\frac{d}{ds}\bigg|_{s=0}\varphi_s(p)=u(p)\,Q(p),\qquad
W_p=\frac{d^2}{ds^2}\bigg|_{s=0}\varphi_s(p)=w(p)\,Q(p).
\]
If we take $\om(s,p)=s$ in \eqref{eq:newvar} then we obtain the variations used in \cite{rr2}. They satisfy $U_p=Q_p$ and $W_p=0$. In general, when we have to construct a volume-preserving variation with a prescribed velocity we may expect that the associated acceleration vector need not vanish. This is shown in the next result, which provides the main motivation to introduce the variations in \eqref{eq:newvar}, see \cite[Lem.~2.4]{bdc} and \cite[Lem.~2.2]{bdce} for a proof. 

\begin{lemma}
\label{lem:vpvar}
Let $\Sg$ be a $C^2$ orientable surface immersed in a Sasakian sub-Riemannian $3$-manifold. For any $u\in C^1_0(\Sg)$ with $\int_\Sg u\,d\Sg=0$ there is a volume-preserving variation as in \eqref{eq:newvar} with $Q_p=N_p$, support contained in 
$\emph{supp}(u)$, and velocity vector field $U=uN$. 
\end{lemma}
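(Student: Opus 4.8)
The plan is to reduce the statement to the implicit function theorem by introducing an auxiliary one-parameter correction, following the classical scheme of \cite{bdc} and \cite{bdce}. If $u\equiv 0$, then $\om\equiv 0$ (i.e. the trivial variation) works, so I assume $u\not\equiv 0$. Since $u$ is continuous, $\text{supp}(u)$ then has nonempty interior, and I can fix a function $g\in C^\infty_0(\Sg)$ with $\text{supp}(g)\subseteq\text{supp}(u)$ and $\int_\Sg g\,d\Sg\neq 0$. This placement of $g$ inside $\text{supp}(u)$ is exactly what will keep the final support constraint.

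Next I would consider the two-parameter family $\Phi(s,\tau,p):=\exp_p\big((s\,u(p)+\tau\,g(p))\,N_p\big)$, defined for $(s,\tau)$ near the origin, together with its signed enclosed-volume function $\widetilde V(s,\tau)$. Writing the swept volume as an iterated integral along the normal geodesics $c_p(\xi)=\exp_p(\xi N_p)$, namely $\widetilde V(s,\tau)=\int_\Sg\big(\int_0^{su(p)+\tau g(p)}\mathcal{J}(p,\xi)\,d\xi\big)\,d\Sg(p)$ with $\mathcal{J}$ the Riemannian Jacobian factor and $\mathcal{J}(p,0)=1$, shows that $\widetilde V$ depends smoothly on $(s,\tau)$: each $s$- or $\tau$-derivative only hits the smooth integrand and the affine upper limit. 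The standard Riemannian first variation of volume then gives $\widetilde V(0,0)=0$, while $(\ptl\widetilde V/\ptl s)(0,0)=\int_\Sg u\,d\Sg=0$ by the mean-zero hypothesis and $(\ptl\widetilde V/\ptl\tau)(0,0)=\int_\Sg g\,d\Sg\neq 0$ by the choice of $g$.

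Because the $\tau$-derivative at the origin is nonzero, the implicit function theorem provides a smooth $\tau(s)$ with $\tau(0)=0$ and $\widetilde V(s,\tau(s))\equiv 0$ for small $s$. Differentiating this identity at $s=0$ and using $(\ptl\widetilde V/\ptl s)(0,0)=0$ forces $\tau'(0)=0$. I then set $\om(s,p):=s\,u(p)+\tau(s)\,g(p)$ and $\var_s(p):=\exp_p(\om(s,p)\,N_p)$, so that by construction the variation is volume-preserving. It remains to verify admissibility: $\om(0,p)=0$ since $\tau(0)=0$; the derivatives in \eqref{eq:omega2} and \eqref{eq:omega3} exist and are continuous because $\tau$ is smooth, $u\in C^1_0(\Sg)$ and $g\in C^\infty_0(\Sg)$; and $\om(s,\cdot)$ is supported in $\text{supp}(u)\cup\text{supp}(g)=\text{supp}(u)$, giving \eqref{eq:omega4}. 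Finally, the velocity vector field is $U_p=(\ptl\om/\ptl s)(0,p)\,N_p=(u(p)+\tau'(0)\,g(p))\,N_p=u(p)\,N_p$, as required by the form \eqref{eq:newvar} with $Q=N$.

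The only genuinely delicate points are the smooth dependence of $\widetilde V$ on $(s,\tau)$ and the computation of its first-order partials (so that the implicit function theorem applies and, crucially, that $\tau'(0)=0$ in order to recover $U=uN$ rather than a perturbed velocity), together with the elementary but necessary observation that $g$ can be chosen supported inside $\text{supp}(u)$. Everything else is routine.
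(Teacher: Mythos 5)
Your argument is correct and follows essentially the same route as the paper, which does not prove this lemma itself but defers to \cite[Lem.~2.4]{bdc} and \cite[Lem.~2.2]{bdce}: there too one perturbs the normal variation by an auxiliary function $g$ with nonzero integral and applies the implicit function theorem to the two-parameter volume function, with $\tau'(0)=0$ recovering the prescribed velocity $uN$. Your added observation that $g$ can be chosen with support inside $\mathrm{supp}(u)$ is exactly the small extra point needed for the support clause of the statement.
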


\subsection{Stationary surfaces}
Here we gather some basic properties of critical points of the area with or without a volume constraint that will be useful in the remainder of the paper.

Let $M$ be a Sasakian sub-Riemannian $3$-manifold and $\Sg$ an orientable $C^2$ surface immersed in $M$. The (\emph{sub-Riemannian}) \emph{mean
curvature} of $\Sg$ is the function defined in \cite{rr1} and \cite{rr2} by equality
\begin{equation}
\label{eq:mc}
-2H(p)=(\divv_{\Sg}\nu_{h})(p),\qquad p\in\Sg-\Sg_{0},
\end{equation}
where $\nuh$ is the horizontal Gauss map introduced in \eqref{eq:nuh} and
$\divv_{\Sg}U$ stands for the divergence relative to $\Sg$ in $(M,g)$ of a $C^1$
vector field $U$.  

The following equalities are obtained by reproducing the computations in \cite[Lem.~3.5 and Rem.~3.6]{hrr} with the help of \eqref{eq:dut} and \eqref{eq:conmute}.

\begin{lemma}
Let $\Sg$ be a $C^2$ orientable surface immersed in a Sasakian sub-Riemannian $3$-manifold $M$.  Consider a point $p\in\Sg-\Sg_0$, the horizontal Gauss map $\nu_h$ and the characteristic field $Z$ defined in \eqref{eq:nuh}.  For any $v\in
T_{p}M$ we have
\begin{align}
\label{eq:vmnh}
v\,(|N_h|)&=\escpr{D_{v}N, \nu_{h}}-\escpr{N,T}\,
\escpr{J(v),\nu_{h}},
\\
\label{eq:vnt}
v(\escpr{N,T})&=\escpr{D_{v}N,T}+\escpr{N,J(v)}, \\
\label{eq:dvnuh}
D_{v}\nu_h&=|N_h|^{-1}\, \big(\escpr{D_vN,Z}-\escpr{N,T}\,
\escpr{J(v),Z}\big)\,Z+\escpr{Z,v}\,T.
\end{align}
Moreover, if $H$ is the mean curvature of $\Sg$, then we get the following identities on $\Sg-\Sg_{0}$
\begin{align}
\label{eq:mc2}
2H&=|N_{h}|^{-1}\,\escpr{B(Z),Z},
\\
\label{eq:dzz}
D_{Z}Z&=(2H)\,\nuh,
\end{align}
where $B$ is the Riemannian shape operator of $\Sg$.
\end{lemma}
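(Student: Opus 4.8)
The goal is to establish the five identities \eqref{eq:vmnh}, \eqref{eq:vnt}, \eqref{eq:dvnuh}, \eqref{eq:mc2}, and \eqref{eq:dzz} at a regular point $p\in\Sg-\Sg_0$, where the horizontal Gauss map $\nu_h$ and the characteristic field $Z=J(\nu_h)$ are well defined. The plan is to treat these as direct computations using the Sasakian structure equations, principally \eqref{eq:dut}, namely $D_UT=J(U)$, and the skew-symmetry of $J$ recorded in \eqref{eq:conmute}. The relations \eqref{eq:relations} connecting $N$, $\nu_h$, $S$, and $T$ on the regular set will be used repeatedly without further comment.

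First I would prove \eqref{eq:vmnh} and \eqref{eq:vnt} together, since they are the scalar derivatives of the two quantities $|N_h|$ and $\escpr{N,T}$ that determine the splitting $N=|N_h|\,\nu_h+\escpr{N,T}\,T$. For \eqref{eq:vnt} I differentiate $\escpr{N,T}$ along an arbitrary $v\in T_pM$, using the product rule and then \eqref{eq:dut} to replace $D_vT$ by $J(v)$; this yields $v(\escpr{N,T})=\escpr{D_vN,T}+\escpr{N,J(v)}$ at once. For \eqref{eq:vmnh} I start from $|N_h|^2=1-\escpr{N,T}^2$ (the first identity in \eqref{eq:relations}), differentiate, and substitute the expression just obtained for $v(\escpr{N,T})$, then rewrite $\escpr{N,J(v)}$ in terms of $\nu_h$; the bookkeeping between $N$ and its horizontal part, together with $J(T)=0$, should collapse everything to the stated formula. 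The one point that needs care here is keeping track of the orthonormal frame $\{Z,\nu_h,T\}$ so that inner products against $N$ are resolved correctly.

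Next I would derive \eqref{eq:dvnuh} by differentiating $\nu_h=N_h/|N_h|=(N-\escpr{N,T}\,T)/|N_h|$ along $v$. This expands, via the quotient rule, into a combination of $D_vN$, the scalar derivatives $v(\escpr{N,T})$ and $v(|N_h|)$ already computed, and the term $\escpr{N,T}\,D_vT=\escpr{N,T}\,J(v)$ coming from \eqref{eq:dut}. Projecting the result onto the frame $\{Z,\nu_h,T\}$ and using that $\escpr{D_v\nu_h,\nu_h}=0$ (since $|\nu_h|=1$) should eliminate the $\nu_h$-component and leave precisely the $Z$- and $T$-components displayed in \eqref{eq:dvnuh}; the coefficient $\escpr{Z,v}$ of $T$ arises from the $\escpr{N,T}\,J(v)$ contribution after pairing with $T$ and invoking $\escpr{J(v),T}=-\escpr{v,J(T)}=0$ so that the vertical part comes entirely from the geometry of the frame.

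Finally, \eqref{eq:mc2} and \eqref{eq:dzz} follow from \eqref{eq:dvnuh} specialized to $v=Z$. For \eqref{eq:mc2} I would compute $\divv_\Sg\nu_h=\escpr{D_Z\nu_h,Z}+\escpr{D_S\nu_h,S}$ in the tangent frame $\{Z,S\}$; by \eqref{eq:dvnuh} the $Z$-component of $D_S\nu_h$ vanishes (its $Z$-coefficient involves $\escpr{Z,S}=0$ after accounting for the normal terms), so $-2H$ reduces to the $Z$-part of $D_Z\nu_h$, which the formula \eqref{eq:dvnuh} expresses through $\escpr{B(Z),Z}=-\escpr{D_ZN,Z}$ divided by $|N_h|$; reconciling the sign conventions of $B$ and $H$ is the step most likely to trip one up. For \eqref{eq:dzz} I would write $D_ZZ=D_Z(J(\nu_h))$ and apply \eqref{eq:dujv} to commute $D_Z$ past $J$, then insert \eqref{eq:dvnuh} for $D_Z\nu_h$ and simplify using \eqref{eq:mc2}; the horizontal curvature terms in \eqref{eq:dujv} should cancel the vertical contributions, leaving $D_ZZ=(2H)\,\nu_h$. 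The main obstacle throughout is not conceptual but organizational: carrying the decomposition of $N$ and keeping the signs in $J$, $B$, and $H$ consistent so that the five formulas emerge in exactly the normalized form stated.
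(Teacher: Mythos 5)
Your proposal is correct and follows essentially the same route as the paper, which gives no independent proof but says the identities are obtained by reproducing the computations of \cite[Lem.~3.5 and Rem.~3.6]{hrr} with the help of \eqref{eq:dut} and \eqref{eq:conmute} --- precisely the direct frame computations you carry out. Two small misattributions in your sketch do not affect the outcome: in \eqref{eq:dvnuh} the $T$-coefficient $\escpr{Z,v}$ actually arises from $\escpr{D_vN,T}-v(\escpr{N,T})=-\escpr{N,J(v)}=\mnh\escpr{Z,v}$ rather than from pairing $\escpr{N,T}\,J(v)$ with $T$ (which gives zero), and in \eqref{eq:mc2} it is the $T$-component of $D_S\nuh$ that vanishes (its coefficient is $\escpr{Z,S}=0$), the term $\escpr{D_S\nuh,S}$ then dropping out because the surviving $Z$-part is orthogonal to $S$.
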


We say that $\Sg$ is a \emph{constant mean curvature surface} (CMC surface) if $H$ is constant on $\Sg-\Sg_{0}$. A CMC surface with $H=0$ is a \emph{minimal surface}. 
CMC surfaces satisfy nice geometric and analytical properties. They appear as critical points of the area for volume-preserving variations supported off of the singular set $\Sg_0$. They are also ruled surfaces of $M$. These facts are well known and have been studied in pseudo-Hermitian manifolds \cite{chmy}, vertically rigid manifolds \cite{hp1}, and even in general sub-Riemannian manifolds \cite{hp2}. 

\begin{proposition}
\label{prop:varprop}
Let $\Sg$ be a $C^2$ orientable surface immersed inside a Sasakian sub-Riemannian $3$-manifold $M$.  Then we have
\begin{itemize}  
\item[(i)] If $\Sg$ is area-stationary $($resp.~volume-preserving area-stationary$)$ then $\Sg$ is minimal $($resp.~$\Sg$ is a CMC surface$)$.
\item[(ii)] $\Sg$ has constant mean curvature $H$ if and only if $(A+2HV)'(0)=0$ for
any variation of $\Sg$ as in \eqref{eq:newvar} supported in $\Sg-\Sg_0$.
\item[(iii)] If $\Sg$ has constant mean curvature $H$ then any characteristic curve of $\Sg$ is an open piece of a CC-geodesic in $M$ of curvature $H$.  
\item[(iv)] If $\Sg$ is a CMC surface then, in $\Sg-\Sg_{0}$, the normal vector $N$ is $C^\infty$ in the 
direction of the characteristic field $Z$.
\end{itemize}
\end{proposition}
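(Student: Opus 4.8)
The plan is to treat the four assertions in turn, using the first variation formulas for area and volume for (i)--(ii), the structure equation \eqref{eq:dzz} for (iii), and the CC-Jacobi fields of Section~\ref{sec:ccgeo} for (iv). I would first dispose of (i) and (ii) together. Reproducing the first variation computation of \cite{rr2} in the Sasakian setting, one shows that for a variation \eqref{eq:newvar} supported in $\Sg-\Sg_0$, with velocity $U$ and normal component $u:=\escpr{U,N}$, one has $A'(0)=-\int_\Sg 2H_\Sg\,u\,d\Sg$ and $V'(0)=\int_\Sg u\,d\Sg$, where $H_\Sg$ is the mean curvature function \eqref{eq:mc}. The normal component of the velocity of \eqref{eq:newvar} is $u\,\escpr{Q,N}$, and taking $Q=N$ realizes every $u\in C^1_0(\Sg-\Sg_0)$; hence $(A+2HV)'(0)=\int_\Sg 2(H-H_\Sg)\,u\,d\Sg$ vanishes for all such variations exactly when $H_\Sg\equiv H$ on $\Sg-\Sg_0$, which is (ii). For (i), if $\Sg$ is area-stationary then $A'(0)=0$ for every $u$, so $H_\Sg\equiv 0$ and $\Sg$ is minimal; if $\Sg$ is area-stationary under a volume constraint then $A'(0)=0$ for every $u$ with $\int_\Sg u\,d\Sg=0$, and the usual $L^2$-orthogonality (Lagrange multiplier) argument forces $H_\Sg$ to be constant, so $\Sg$ is a CMC surface. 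Since $\Sg_0$ has vanishing Riemannian area and $\Sg-\Sg_0$ is dense, these conclusions pass to all of $\Sg$.

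Assertion (iii) is immediate from \eqref{eq:dzz}. If $\ga$ is a characteristic curve then $\dot\ga=Z$ is unit and horizontal, and $\dot\ga'=D_ZZ=(2H)\,\nuh$. Because $Z=J(\nuh)$ with $\nuh$ horizontal, \eqref{eq:jj} gives $J(\dot\ga)=J^2(\nuh)=-\nuh$, so that $\dot\ga'+2H\,J(\dot\ga)=0$; this is exactly equation \eqref{eq:geoeq} with $\la=H$, and therefore $\ga$ is an open piece of a CC-geodesic of curvature $H$, in particular $C^\infty$.

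The delicate point is (iv), where the CC-Jacobi fields are the key tool. Near a point $p\in\Sg-\Sg_0$ I would parametrize $\Sg$ by the characteristic flow: pick a $C^2$ curve $\alpha$ in $\Sg$ transversal to the characteristic direction and set $U:=Z\circ\alpha$, a $C^1$ unit horizontal field (since $\Sg\in C^2$ forces $\nuh\in C^1$, hence $Z\in C^1$, on $\Sg-\Sg_0$). Lemma~\ref{lem:ccjacobi} then yields the map $F(\eps,s)=\ga_\eps(s)$, with $\ga_\eps$ the CC-geodesic of curvature $H$ leaving $\alpha(\eps)$ with velocity $U(\eps)$; by (iii) the $\ga_\eps$ are precisely the characteristic curves of $\Sg$. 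Both $\ptl F/\ptl s=\dot\ga_\eps$ and $\ptl F/\ptl\eps=V_\eps$ are $C^\infty$ along each $\ga_\eps$, the latter by Lemma~\ref{lem:ccjacobi}~(i). As $N$ is orthogonal to the two tangent fields $\dot\ga_\eps$ and $V_\eps$, it is proportional to their vector product $\dot\ga_\eps\times V_\eps$ determined by $g$ and the orientation of $M$; this product is $C^\infty$ along $\ga_\eps$ and, by the immersion hypothesis, nonzero near $s=0$, so its normalization $N$ is $C^\infty$ along $\ga_\eps=\ga$. This is exactly the assertion that $N$ is $C^\infty$ in the direction of $Z$.

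I expect (iv) to be the main obstacle. The frame equations for $\{Z,\nuh,T\}$ along a single characteristic curve, derived from \eqref{eq:dut}, \eqref{eq:dzz} and \eqref{eq:dvnuh}, turn out to be self-consistent for an arbitrary profile of $\escpr{N,T}$ along $\ga$, so the smoothness of $N$ cannot be read off from the characteristic curve in isolation; it is the transversal Jacobi field $V_\eps$ that supplies the missing datum and upgrades the a priori $C^1$ regularity of $N$ (coming from $\Sg\in C^2$) to $C^\infty$ in the characteristic direction.
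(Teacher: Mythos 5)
Your proposal is correct and follows essentially the same route as the paper: the first variation formulas $A'(0)=-\int_\Sg 2H\escpr{U,N}\,d\Sg$ and $V'(0)=\int_\Sg\escpr{U,N}\,d\Sg$ combined with mean-zero test functions for (i)--(ii), the identity $D_ZZ=(2H)\nu_h$ together with $J(Z)=-\nu_h$ for (iii), and for (iv) the parametrization $F(\eps,s)=\ga_\eps(s)$ by characteristic curves with the $C^\infty$ Jacobi field $V_\eps$ from Lemma~\ref{lem:ccjacobi}, expressing $N$ as the normalized cross product of $\dot\ga_\eps$ and $V_\eps$ (the paper writes this cross product explicitly as $\pm(|V|^2-\escpr{V,\dot{\ga}}^2)^{-1/2}(\escpr{V,T}J(\dot{\ga})-\escpr{V,J(\dot{\ga})}T)$). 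No gaps.
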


\begin{remark}
\label{re:divv}
A continuous vector field $V$ (resp.~a continuous 
function $f$) on $\Sg-\Sg_{0}$ is $C^\infty$ in the $Z$-direction if 
the restriction of $V$ (resp. $f$) to any characteristic curve $\ga$ 
provides a $C^\infty$ vector field (resp. $C^\infty$ function) along 
$\ga$. If $f$ is $C^1$ in the $Z$-direction then we define
$\divv_\Sg(fZ):=f\divv_\Sg Z+Z(f)$, see \cite[Sect.~2.7]{hrr} for details.
\end{remark}

\begin{proof}[Proof of Proposition~\ref{prop:varprop}]
Take a variation of $\Sg$ as in \eqref{eq:newvar} with support contained in $\Sg-\Sg_0$. Following the proof of \cite[Lem.~4.3]{rr2} we can show that the first derivative of the area functional for such a variation is given by
\begin{equation}
\label{eq:aprima}
A'(0)=-\int_\Sg 2H\,\escpr{U,N}\,d\Sg,
\end{equation}
where $U$ is the associated velocity vector field. On the other hand, it is known \cite[Lem.~2.1]{bdce} that the first derivative of the volume functional is
\begin{equation}
\label{eq:vprima}
V'(0)=\int_\Sg\escpr{U,N}\,d\Sg.
\end{equation}
For any function $u\in C^1_0(\Sg-\Sg_0)$ with $\int_\Sg u\,d\Sg=0$ we can apply Lemma~\ref{lem:vpvar} to find a volume-preserving-variation as in \eqref{eq:newvar}, supported in $\Sg-\Sg_0$, and whose velocity vector field is $uN$. If $\Sg$ is volume-preserving area-stationary then $\int_\Sg Hu\,d\Sg=0$ for any mean zero function $u$ with compact support in $\Sg-\Sg_0$. This proves (i) for volume-preserving area-stationary surfaces. That an area-stationary surface is minimal and statement (ii) follow  from \eqref{eq:aprima} and \eqref{eq:vprima} by using variations as in \eqref{eq:newvar} with $\om(s,p)=s\,u(p)$ and $Q_p=N_p$. Statement (iii) is a consequence of \eqref{eq:geoeq} and~\eqref{eq:dzz}.

To prove (iv) we proceed as in
\cite[Lem.~3.4]{hrr}.  Take $p\in\Sg-\Sg_{0}$.  We denote by $\ga$ the
characteristic curve of $\Sg$ through $p$.  Let
$\alpha:(-\eps_{0},\eps_{0})\to\Sg-\Sg_{0}$ be a $C^1$ curve
transverse to $\ga$ with $\alpha(0)=p$.  Define, for $s$ small enough, the map
$F(\eps,s):=\ga_{\eps}(s)$, where $\ga_{\eps}$ is the characteristic curve passing through $\alpha(\eps)$. We deduce from statement (iii) that any $\ga_{\eps}$ is a CC-geodesic in $M$ of curvature $H$.  By Lemma~\ref{lem:ccjacobi} (i) we get that $V(s):=(\ptl F/\ptl\eps)(0,s)$ is
a $C^\infty$ vector field along $\ga$ which is also tangent to $\Sg$.  Since both $\dot{\ga}(s)$ and $V(s)$ are $C^\infty$ and linearly independent for $s$ small enough,
the unit normal $N$ to $\Sg$ along $\ga$ is given by $N=\pm (|V|^2-\escpr{V,\dot{\ga}}^2)^{-1/2}\,\left (\escpr{V,T}\,J(\dot{\ga})-\escpr{V,J(\dot{\ga})}\,T\right)$.
\end{proof}

\subsection{Vertical surfaces}
\label{subsec:vertical}
Here we study certain CMC surfaces that will play an important role in our classification results in Section~\ref{sec:main}.

Let $\Sg$ be a $C^2$ orientable surface immersed in a Sasakian sub-Riemannian $3$-manifold $M$. We say that $\Sg$ is \emph{vertical} if the Reeb vector field $T$ is always tangent to $\Sg$. Clearly a vertical surface has empty singular set. 

Complete CMC vertical surfaces have been described in the first Heisenberg group \cite[Lem.~4.9]{gp}, \cite[Prop.~6.16]{rr2} and in the sub-Riemannian $3$-sphere \cite[Prop.~5.11]{hr1}. We generalize these results by showing that a complete CMC vertical surface inside a Sasakian $3$-manifold $M$, whose sub-Riemannian structure is of bundle type, is a cylinder over a curve of constant geodesic curvature in the base space. Recall that a surface $\Sg$ immersed in $M$ is \emph{complete} if it is complete with respect to the Riemannian metric induced by $(M,g)$.

\begin{proposition}
\label{prop:vert}
Let $M$ be a complete Sasakian sub-Riemannian $3$-manifold. Then we have
\begin{itemize}
\item[(i)] If $\ga:\rr\to M$ is a CC-geodesic of curvature $\la$, then the map $F:\rr^2\to M$ given by $F(\eps,s)=\tau_\eps(s)$, where $\tau_\eps(s)$ is the integral curve of $T$ with $\tau_\eps(0)=\ga(\eps)$, defines a complete vertical surface $\Sg_\ga$ of constant mean curvature $\la$.
\item[(ii)] Any complete, connected, CMC vertical surface $\Sg$ in $M$ is a surface $\Sg_\ga$ as obtained in \emph{(i)}.
\item[(iii)] Suppose that $\pi:M\to E$ is a Riemannian submersion onto a Riemannian surface $E$ such that $\h=(\emph{Ker}(d\pi))^\bot$. If $\Sg$ is a complete vertical surface of constant mean curvature $H$ then $\Sg=\pi^{-1}(\alpha)$, where $\alpha$ is a curve in $E$ of constant geodesic curvature $2H$ computed with respect to the unit normal $n:=(d\pi)(N)$. 
\end{itemize}
\end{proposition}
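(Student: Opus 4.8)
The plan is to prove the three items in order, using each as input for the next. For item (i) I would exploit that $M$ is Sasakian, so the one-parameter group $\{\Phi_s\}$ associated to $T$ consists of sub-Riemannian isometries. Writing $F(\eps,s)=\Phi_s(\ga(\eps))$, the curve $\eps\mapsto F(\eps,s)$ is the image of the CC-geodesic $\ga$ under the isometry $\Phi_s$; since an orientation-preserving isometry of $(M,g)$ fixing $\h$ preserves $J$ and the connection $D$, it preserves equation \eqref{eq:geoeq}, so each such curve is again a CC-geodesic of curvature $\la$. Hence $\ptl F/\ptl\eps$ is a unit horizontal field, $\ptl F/\ptl s=T$ is the unit vertical Reeb field, the two are orthogonal, and $|\ptl F/\ptl\eps|\equiv 1$ because $\Phi_s$ is an isometry. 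Thus $F^{*}g$ is the flat product $d\eps^2+ds^2$, which shows simultaneously that $F$ is an immersion, that $\Sg_\ga$ is vertical, and that $\Sg_\ga$ is complete. For the mean curvature I note that the characteristic field $Z$ of \eqref{eq:nuh} is proportional to $\ptl F/\ptl\eps$, so the characteristic curves are exactly these CC-geodesics; then \eqref{eq:geoeq} gives $D_ZZ=-2\la\,J(Z)$, and since $Z=J(\nuh)$ with $\nuh$ horizontal, \eqref{eq:jj} yields $J(Z)=-\nuh$, so $D_ZZ=2\la\,\nuh$. Comparison with \eqref{eq:dzz} forces $H=\la$.

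For item (ii), let $\Sg$ be complete, connected, CMC and vertical. Verticality gives $N=N_h=\nuh$, $|N_h|=1$, empty singular set, and by \eqref{eq:ese} $S=-T$, so $\{Z,T\}$ spans the tangent plane everywhere. By Proposition~\ref{prop:varprop}(iii) every characteristic curve is a CC-geodesic of curvature $H$, defined on all of $\rr$ by completeness of $\Sg$ and $M$. Fixing $p\in\Sg$ and taking $\ga$ to be the characteristic curve through $p$, I construct $\Sg_\ga$ as in (i) with $\la=H$. Because $T$ is tangent to $\Sg$ and $\Sg$ is complete, the integral curves of $T$ issuing from points of $\ga$ remain in $\Sg$, so the image of $F$ lies in $\Sg$. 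The corestriction $F:\rr^2\to\Sg$ is then an immersion between surfaces, hence a local diffeomorphism, and by the metric computation of (i) it is a local isometry from the complete flat plane. The classical theorem that a local isometry from a complete manifold onto a connected manifold is a surjective covering then yields $\Sg=\Sg_\ga$.

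For item (iii), since $\h=(\text{Ker}(d\pi))^\bot$ with $\dim M=3$ and $\dim E=2$, the line field $\text{Ker}(d\pi)$ is spanned by $T$, so the fibers of $\pi$ are the integral curves of $T$. By (ii), $\Sg=\Sg_\ga$ is swept out by such integral curves, hence is a union of fibers, i.e. $\pi$-saturated, so $\Sg=\pi^{-1}(\alpha)$ with $\alpha:=\pi(\ga)$. Applying Lemma~\ref{lem:geofunction} to the CC-geodesic $\ga$ of curvature $\la=H$ shows that $\alpha$ has constant geodesic curvature $2\la=2H$ with respect to $n=-(d\pi)(J(\dot\ga))$. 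Finally I would check that this normal agrees with $(d\pi)(N)$ from the statement: since $\dot\ga=Z=J(\nuh)=J(N)$ and $\escpr{N,T}=0$, equation \eqref{eq:jj} gives $J(\dot\ga)=J^2(N)=-N$, hence $-(d\pi)(J(\dot\ga))=(d\pi)(N)$.

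The main obstacle is the global identification $\Sg=\Sg_\ga$ in (ii): passing from the local agreement of the two surfaces near $p$ to global equality genuinely requires completeness, and it is resolved only through the flatness of the induced metric established in (i) together with the covering-map theorem for local isometries. A secondary point demanding care throughout is the bookkeeping of orientation conventions, so that the mean curvature emerges as $+\la$ rather than $-\la$ and the two descriptions of the downstairs normal $n$ coincide.
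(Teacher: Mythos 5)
Your proposal is correct and follows essentially the same route as the paper: the flow of $T$ acting by isometries gives (i), characteristic curves being complete CC-geodesics plus completeness and connectedness give (ii), and Lemma~\ref{lem:geofunction} gives (iii). The only difference is that you flesh out details the paper leaves implicit, namely the explicit computation $D_ZZ=2\la\,\nuh$ identifying $H=\la$, and the flat-metric/covering-map argument making rigorous the paper's one-line conclusion that $\Sg=\Sg_\ga$.
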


\begin{proof}
Let $\{\varphi_s\}_{s\in\rr}$ be the uniparametric group of diffeomorphisms associated to $T$. As $M$ is Sasakian then any $\varphi_s$ is an isometry of $(M,g)$. We have $(\ptl F/\ptl \eps)(\eps,s)=(d\varphi_s)_{\ga(\eps)}(\dot{\ga}(\eps))$ and $(\ptl F/\ptl s)(\eps,s)=T_{F(\eps,s)}$. This shows that $\Sg_\ga$ is a complete vertical surface. The Riemannian unit normal over $\Sg_\ga$ is given by $N(\eps,s)=-(d\varphi_s)_{\ga(\eps)}(J(\dot{\ga}(\eps)))$. So, the characteristic field $Z(\eps,s)$ equals $(\ptl F/\ptl\eps)(\eps,s)$. The fact that $\ga$ is a CC-geodesic of curvature $\la$ implies that $\Sg_\ga$ has constant mean curvature $\la$. This proves (i). To prove (ii) we take a characteristic curve $\ga$ of $\Sg$. By Proposition~\ref{prop:varprop} (iii) we have that $\ga$ is a complete CC-geodesic in $M$.  Note that the integral curves of $T$ passing through $\ga$ are entirely contained in $\Sg$.  As $\Sg$ is complete and connected we conclude that $\Sg=\Sg_\ga$. Statement (iii) comes from (ii). In fact, we get by Lemma~\ref{lem:geofunction} that the curve $\alpha:=\pi\circ\ga$ has constant geodesic curvature $2H$ in $E$ with respect to the unit normal $-(d\pi)(J(\dot{\ga}))=(d\pi)(N)$. Thus equality $\Sg_\ga=\pi^{-1}(\alpha)$ proves the claim. 
\end{proof}

As an application of Proposition~\ref{prop:vert} we deduce the following result.

\begin{corollary}
\label{cor:vertmodel}
Let $\Sg$ be a complete, connected, vertical surface of constant mean curvature $H$ inside the sub-Riemannian hyperbolic $3$-space $\mathbb{M}(-1)$. Then $\Sg$ is either
\begin{itemize}
\item[(i)] a vertical plane $($a cylinder over a geodesic in $\mathbb{N}(-1)$$)$ if $H=0$, or
\item[(ii)] a hypercylinder $($a cylinder over a hypercycle in $\mathbb{N}(-1)$$)$ if $0<|H|<1$, or
\item[(iii)] a horocylinder $($a cylinder over a horocycle in $\mathbb{N}(-1)$$)$ if $|H|=1$, or
\item[(iv)] a right circular cylinder if $|H|>1$.
\end{itemize}
\end{corollary}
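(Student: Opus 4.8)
The plan is to invoke Proposition~\ref{prop:vert}~(iii) with the Riemannian submersion $\pi:\mathbb{M}(-1)\to\mathbb{N}(-1)$ given by the Euclidean projection $\pi(x,y,t)=(x,y)$, for which $\h=(\text{Ker}(d\pi))^\bot$ as recorded in Section~\ref{subsec:sf}. That proposition identifies $\Sg$ with a cylinder $\pi^{-1}(\alpha)$, where $\alpha$ is a curve in the base surface $\mathbb{N}(-1)$ of constant geodesic curvature $2H$ with respect to the unit normal $n=(d\pi)(N)$. In this way the whole classification reduces to describing the complete curves of constant geodesic curvature in $\mathbb{N}(-1)$.

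First I would record that, by the construction in Section~\ref{subsec:sf}, the base $\mathbb{N}(-1)$ is the complete, simply connected Riemannian surface of constant sectional curvature $4\kappa=-4$, that is, a model of the hyperbolic plane. Since the fibers of $\pi$ are the complete vertical geodesics of $\mathbb{M}(-1)$ and $\Sg=\pi^{-1}(\alpha)$ is complete, the curve $\alpha$ is itself complete in $\mathbb{N}(-1)$; this also follows from Proposition~\ref{prop:vert}~(ii) together with Lemma~\ref{lem:geomodel}, which describe $\Sg$ as the $T$-flow of a complete CC-geodesic.

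Next I would apply the standard classification of complete curves of constant geodesic curvature $c$ in a hyperbolic plane of curvature $-4$: the threshold value is $\sqrt{4}=2$, so $\alpha$ is a geodesic when $c=0$, a hypercycle (equidistant curve) when $0<|c|<2$, a horocycle when $|c|=2$, and a circle when $|c|>2$. Substituting $c=2H$ converts these four regimes into $H=0$, $0<|H|<1$, $|H|=1$, and $|H|>1$, respectively. The corresponding cylinders $\pi^{-1}(\alpha)$ are precisely the vertical plane, the hypercylinder, the horocylinder, and the right circular cylinder of the statement.

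The only technical point requiring care is the normalization of the curvature threshold: because $\mathbb{N}(-1)$ has curvature $-4$ rather than $-1$, the critical geodesic curvature separating the regimes is $2$, and it is exactly the relation $c=2H$ furnished by Proposition~\ref{prop:vert}~(iii) that yields the clean cutoffs at $|H|=1$. Everything else is a direct transcription of the planar classification through the submersion $\pi$.
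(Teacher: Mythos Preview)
Your proposal is correct and is precisely the argument the paper has in mind: the corollary is stated immediately after Proposition~\ref{prop:vert} with the remark that it follows as a direct application, and your reduction via $\pi:\mathbb{M}(-1)\to\mathbb{N}(-1)$ together with the classification of complete constant-curvature curves in the hyperbolic plane of curvature $-4$ (with threshold $2=2|H|$) is exactly that application.
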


\section{A second variation formula for cmc surfaces in Sasakian $3$-manifolds. 
\\ Stable surfaces}
\label{sec:2ndvar}

Here we study analytical properties of second order minima of the area functional \eqref{eq:area} with or without a volume constraint. 

Let $M$ be a Sasakian sub-Riemannian $3$-manifold. An area-stationary surface $\Sg$ in $M$ is \emph{stable} if $A''(0)\geq 0$ for any variation.  A volume-preserving area-stationary surface $\Sg$ in $M$ is \emph{stable under a volume constraint} if $A''(0)\geq 0$ for any volume-preserving variation.

\begin{remark}
For an area-stationary surface to be stable is stronger than the condition of being stable under a volume constraint. The classification of complete $C^2$ stable area-stationary surfaces in the first Heisenberg group was obtained in \cite{hrr}. In Section~\ref{sec:main} we will analyze which area-stationary surfaces with empty singular set in the Heisenberg group are stable under a volume constraint.
\end{remark}

\subsection{Second variation formula}
\label{subsec:2ndvar}
In this part of the section we take a surface with constant mean curvature $H$ inside a Sasakian sub-Riemannian $3$-manifold and we give an expression for the second
derivative of the functional $A+2HV$ associated to certain variations supported in the regular set. 

The second variation of the area functional \eqref{eq:area} has appeared in several
contexts, see \cite[Prop.~6.1]{chmy}, \cite[Sect.~3.2]{bscv}, \cite{selby}, 
\cite[Sect.~14]{dgn}, \cite[Thm.~4.8]{montefalcone}, \cite[Proof of Thm.~3.5]{mscv}, 
\cite[Thm.~3.7]{hrr}, \cite[Thm.~E]{hp2} and \cite{galli}. A brief description of the obtained  formulas can be found in Section~\ref{sec:intro}. Here we prove the following result.

\begin{theorem}
\label{th:2ndvar}
Let $\Sg$ be a $C^2$ orientable surface immersed in a Sasakian sub-Riemannian $3$-mani\-fold $M$. Let $I$ be an open interval containing the origin and  $\omega:I\times\Sg\to\rr$ a $C^1$ function satisfying  \eqref{eq:omega1}-\eqref{eq:omega4}. Consider a variation of $\Sg$ of the type 
\[
\varphi_{s}(p):=\exp_{p}(\om(s,p)N_{p}),
\]
defined for $p\in\Sg$ and $s$ small enough. Denote by $U$ the associated velocity vector field given by $U_{p}:=u(p)N_{p}$, where $u(p):=(\ptl\omega/\ptl s)(0,p)$.  

If $\Sg$ has constant mean curvature $H$ and the variation $\{\varphi_s\}_s$ is supported off of the singular set $\Sg_0$, then the functional $A+2HV$ is twice differentiable at $s=0$, and
\begin{equation}
\label{eq:gen2ndvar}
(A+2HV)''(0)=\int_{\Sg}|N_{h}|^{-1}\left\{Z(u)^2-
\big(|B(Z)+S|^2+4(K-1)|N_{h}|^2\big)\,u^2\right\}d\Sg,
\end{equation}
where $\{Z,S\}$ is the orthonormal basis defined in \eqref{eq:nuh} and \eqref{eq:ese},
$B$ is the Riemannian shape operator of $\Sg$, and $K$ is the Webster scalar curvature of $M$.
\end{theorem}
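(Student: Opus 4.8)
The plan is to exploit that $\Sg$ has constant mean curvature in order to reduce the whole computation of $(A+2HV)''(0)$ to the \emph{first variation of the mean curvature}. First I would apply the first variation formulas \eqref{eq:aprima} and \eqref{eq:vprima}, not only at $s=0$ but at every small parameter $s$, to the velocity field $U_s:=(\ptl\var/\ptl s)(s,\cdot)$ of the variation regarded as a variation of $\Sg_s$. Writing $H_s$ for the mean curvature of $\Sg_s$, this yields
\[
(A+2HV)'(s)=-\int_{\Sg_s}2\,(H_s-H)\,\escpr{U_s,N_s}\,d\Sg_s.
\]
Since $\Sg$ is a CMC surface, the factor $H_s-H$ vanishes identically at $s=0$. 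Hence, pulling back to the fixed surface $\Sg$ and differentiating at $s=0$, the product rule kills every term except the one hitting $H_s-H$, and we obtain the twice-differentiability together with
\[
(A+2HV)''(0)=-2\int_{\Sg}\dot H\,u\,d\Sg,
\]
where $\dot H:=(d/ds)|_{s=0}(H_s\circ\var_s)$ and $u=\escpr{U,N}$. In particular the acceleration $w$ of the variation never enters, consistent with its absence from \eqref{eq:gen2ndvar}. A point I would emphasize here is that, by \eqref{eq:dvnuh} and $\escpr{J(Z),Z}=0$, the mean curvature reduces to $-2H=\mnh^{-1}\escpr{D_Z N,Z}$, so $H$ depends on $N$ only through its derivative in the characteristic direction; this is what ultimately makes $\dot H$ well defined under the limited regularity (cf.\ Proposition~\ref{prop:varprop}~(iv)) and controls the degeneracy of the final formula.

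The heart of the matter, and the step I expect to be the main obstacle, is to compute $\dot H$ as a linear second order operator in $u$. I would start from the variation of the unit normal: for the normal velocity $U=uN$ one has $\dot N=-\nabla_\Sg u=-(Z(u)\,Z+S(u)\,S)$ in the orthonormal basis of \eqref{eq:nuh} and \eqref{eq:ese}. Then I would linearize $-2H=\mnh^{-1}\escpr{D_Z N,Z}$, using \eqref{eq:vmnh} and \eqref{eq:vnt} for the variations of $\mnh$ and $\escpr{N,T}$, and commuting $D/\partial s$ with $D_Z$ on $N$, which introduces the ambient curvature term $R(U,Z)Z$. Here I would invoke Lemma~\ref{lem:ruvu} (applicable since $Z$ is horizontal) together with the Sasakian identities \eqref{eq:dut} and \eqref{eq:dujv}, which is exactly what replaces the ambient curvature by the Webster scalar curvature and produces the coefficient $4(K-1)$. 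A crucial structural check is that the putative cross terms carrying $Z(S(u))$ drop out (their coefficient is $\escpr{S,Z}=0$), leaving a principal part of the form $\mnh^{-1}Z(Z(u))$ plus lower order terms; the remaining $S(u)$-contributions must be shown to cancel, reflecting the hypoelliptic (purely $Z$-directional) nature of the operator.

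Once $\dot H$ is written as $\mnh^{-1}Z(Z(u))$ plus a zeroth order potential, I would integrate by parts along the characteristic curves. The principal part contributes $-\int_\Sg u\,\divv_\Sg(\mnh^{-1}Z(u)\,Z)\,d\Sg=\int_\Sg\mnh^{-1}Z(u)^2\,d\Sg$, which accounts for the Dirichlet term in \eqref{eq:gen2ndvar}; the compact-support hypothesis \eqref{eq:omega4} ensures no boundary terms survive, and the divergence convention of Remark~\ref{re:divv} legitimizes working along $Z$ alone. The zeroth order contributions, coming from the Riccati quadratic term in the shape operator $B(Z)=-D_ZN$ together with the Sasakian torsion terms carrying $S$ (via $D_ZT=J(Z)$) and the curvature coefficient from Lemma~\ref{lem:ruvu}, then have to be rearranged into the single geometric quantity $|B(Z)+S|^2+4(K-1)\mnh^2$ by completing the square. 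Verifying that the unit sectional-curvature contributions of the planes containing $T$, from \eqref{eq:ruvt}, combine with the $4K-3$ of Lemma~\ref{lem:ruvu} to give precisely $4(K-1)$, and that the cross terms assemble into $|B(Z)+S|^2$, is the delicate bookkeeping on which the exact constants in \eqref{eq:gen2ndvar} rest; I expect this sign and coefficient accounting, carried out with only the $Z$-direction regularity guaranteed by \eqref{eq:omega1}--\eqref{eq:omega4} and Proposition~\ref{prop:varprop}~(iv), to be the most laborious part of the argument.
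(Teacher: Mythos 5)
Your strategy---differentiate the identity $(A+2HV)'(s)=-2\int_{\Sg_s}(H_s-H)\,\escpr{U_s,N_s}\,d\Sg_s$ at $s=0$ and reduce everything to the linearized mean curvature $\dot H$---is the classical route (it is essentially how \cite{bdce} and \cite{chmy} proceed), and in the smooth category it would indeed reproduce \eqref{eq:gen2ndvar} via $\dot H=\tfrac12\mathcal{L}(u)$ and the integration by parts \eqref{eq:ibp1}. But under the hypotheses of the theorem it has a genuine gap at the very first step: the deformed surfaces $\Sg_s$ are only $C^1$. The variation is $\varphi_s(p)=\exp_p(\om(s,p)N_p)$ with $\om$ of class $C^1$ and $N$ of class $C^1$ (since $\Sg$ is only $C^2$), so $d\varphi_s$ already consumes all the available $p$-regularity; the mean curvature $H_s$ of $\Sg_s$ requires one more tangential derivative of the unit normal $N_s$, i.e.\ second derivatives of $N$ and of $\om(s,\cdot)$, neither of which is assumed to exist. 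Thus $H_s$ is not defined for $s\neq 0$, the first variation formula \eqref{eq:aprima} cannot be applied to $\Sg_s$, and the quantity $\dot H=(d/ds)|_{s=0}(H_s\circ\var_s)$ on which your whole computation rests is not available. (Proposition~\ref{prop:varprop}~(iv) gives smoothness of $N$ along characteristic curves only for \emph{CMC} surfaces, and $\Sg_s$ is not CMC.) A related problem appears at the end: writing $(A+2HV)''(0)=-2\int_\Sg\dot H\,u\,d\Sg$ with $\dot H$ a second-order operator in $u$ requires $Z(Z(u))$ pointwise, whereas the hypotheses \eqref{eq:omega2}--\eqref{eq:omega3} only give $u\in C^1_0(\Sg)$; the target formula \eqref{eq:gen2ndvar} is deliberately first order in $u$ and must be produced directly in this ``weak'' (Dirichlet) form, not by integrating a pointwise $\mathcal{L}(u)$ by parts.

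This regularity issue is not a technicality one can wave away, because handling $C^2$ surfaces and $C^1$ variations with nonzero acceleration is the stated point of the theorem. The paper therefore avoids $H_s$ altogether: it differentiates the integrand $\big(\mnh\circ\var_s\big)\,|\text{Jac}\,\var_s|$ (and the analogous volume integrand) twice in $s$ along the normal geodesics $s\mapsto c_p(\om(s,p))$, where all needed $s$-regularity comes from \eqref{eq:omega2}--\eqref{eq:omega3} and the smoothness of the geodesic flow, and only afterwards removes the acceleration terms (the $w$, $S(w)$ contributions) and the second derivatives of geometric quantities through Lemma~\ref{lem:aux4} and the divergence theorem. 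Note in particular that your remark that ``the acceleration $w$ never enters'' is a conclusion of your (unjustified) first step, not a fact you can assume: in the paper's computation $w$ does enter and cancels only after an application of the divergence theorem to $\divv_\Sg(\rho S)$ with $\rho=\escpr{N,T}(2H\mnh u^2-w)$. If you want to rescue your approach you would have to either strengthen the hypotheses to $C^3$ surfaces and $C^2$ variations, or set up an approximation argument justifying the distributional identity $(A+2HV)''(0)=-2\escpr{\dot H,u}$ with $\dot H$ interpreted weakly; neither is sketched in your proposal.
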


\begin{remark}
Formula \eqref{eq:gen2ndvar} is an extension of the second derivative of the area proved in \cite[Thm.~3.7]{hrr} for variations $\varphi_s(p)=\exp_p(s\,u(p)N_p)$ of minimal surfaces in the first Heisenberg group. The proof of \eqref{eq:gen2ndvar} allows also to  get an expression of $(A+2HV)''(0)$ for variations of the type $\exp_p(\om(s,p)N_p+v(p)T_p)$ supported on a regular portion with non-empty boundary of $\Sg$.
\end{remark}

\begin{proof}[Proof of Theorem~\ref{th:2ndvar}]  
We follow closely the proof of \cite[Thm.~3.7]{hrr}.  Let
$\Sg_{s}:=\varphi_{s}(\Sg)$.  We extend the vector field $U$ along the
variation by $U(\varphi_{s}(p))=(d/dt)|_{t=s}\,\varphi_{t}(p)$.  Let $N$ be a
continuous field along the variation whose restriction to any
$\Sg_{s}$ is a unit normal vector.  

We first compute $A''(0)$.  By taking into
account \eqref{eq:area}, the coarea formula, and that the Riemannian
area of $\Sg_{0}$ vanishes, we have
\begin{equation}
\label{eq:ameno}
A(s)=A(\Sg_{s})=\int_{\Sg_{s}}\mnh\,d\Sg_{s}=
\int_{\Sg-\Sg_{0}}\big(\mnh\circ\var_{s}\big) \,|\text{Jac}\,\var_{s}|\,d\Sg,
\end{equation}
where $\text{Jac}\,\var_{s}$ is the Jacobian determinant of the
diffeomorphism $\var_{s}:\Sg\to\Sg_{s}$.  Clearly
$|\text{Jac}\,\var_{0}|=1$ since $\var_{0}(p)=p$ for any $p\in\Sg$. As the variation has compact support in $\Sg-\Sg_0$ we can find, by continuity, an interval $I'=(-s_0,s_0)$ such that $N_h(\varphi_s(p))\neq 0$ whenever $p\in\Sg-\Sg_0$ and $s\in I'$.

Fix a point $p\in\Sg-\Sg_{0}$ and consider the orthonormal basis
$\{e_1,e_{2}\}$ of $T_{p}\Sg$ given by $e_{1}=Z_{p}$ and
$e_{2}=S_{p}$.  Let $\ga_p(s):=\var_{s}(p)=c_p(\om(s,p))$, where $c_p$ is the geodesic in $(M,g)$ with $c_p(0)=0$ and $\dot{c}_p(0)=N_p$.  Let
$\alpha_{i}:(-\eps_{0},\eps_{0})\to\Sg-\Sg_{0}$ be a $C^1$ curve such
that $\alpha_{i}(0)=p$ and $\dot{\alpha}_{i}(0)=e_{i}$. Define the $C^1$ map 
$F_i(\eps,s):=c_{\alpha_{i}(\eps)}(s)$. It can be proved as in Lemma~\ref{lem:ccjacobi} that $(\ptl F_i/\ptl\eps)(0,s)$ is a $C^\infty$ vector field along $c_p(s)$. Consider the $C^1$ map $G_{i}:(-\eps_{0},\eps_{0})\times I'\to M$ given by
$G_{i}(\eps,s):=\var_{s}(\alpha_{i}(\eps))=F_i(\eps,\omega(s,\alpha_{i}(\eps)))$. If we denote $E_{i}(s):=(\ptl G_{i}/\ptl\eps)(0,s)=e_{i}(\var_{s})$ then we have $E_i(0)=e_i$, and
\[
E_i(s)=\frac{\ptl F_i}{\ptl\eps}(0,\omega(s,p))+
\escpr{\frac{\ptl\om}{\ptl p}(s,p),e_i}\,\dot{c}_p(\omega(s,p)).
\]
In particular, we deduce from \eqref{eq:omega2} and \eqref{eq:omega3} that $E_i$ is $C^2$ along $\ga_p$. Now, we can reproduce the arguments in the proof of Lemma~\ref{lem:ccjacobi} to get that $[\dot{\ga}_p,E_{i}]=0$, the covariant derivatives $D_{E_{i}}D_{U}U$, $D_{U}D_{E_{i}}U$ exist, and $R(U,E_{i})U=D_{E_{i}}D_{U}U-D_{U}D_{E_{i}}U$.
Therefore, we have the following identities
\begin{align}
\label{eq:bracket}
D_{U}E_{i}&=D_{E_{i}}U,
\\
\label{eq:jacobi}
D_{U}D_{U}E_{i}&=-R(U,E_{i})U+D_{E_{i}}D_{U}U,    
\end{align}
valid along $\ga_p$. On the other hand, it is clear that $\{E_{1}(s),E_{2}(s)\} $ is a
basis of the tangent space to $\Sg_{s}$ at $\ga_p(s)$. Hence
$|\text{Jac}\,\var_{s}|=(|E_{1}|^2\,|E_{2}|^2-\escpr{E_{1},E_{2}}^2)^{1/2}(s)$, which is a $C^2$ function along $\ga_p$. Moreover, if
$N(s)$ denotes the unit normal to $\Sg_{s}$ at $\ga_p(s)$, then
$N(s)=\pm|E_{1}\times E_{2}|^{-1}\,(E_{1}\times E_{2})(s)$, where the cross product is taken with respect to a local orthonormal frame of $M$. Thus $N(s)$ is
$C^2$ on $\ga_p$.  We conclude that $|N_{h}|(s)$ is $C^2$ along $\ga_p$ as well. As the support of the variation is contained in $\Sg-\Sg_0$ we can differentiate under the integral sign in \eqref{eq:ameno}. We use primes $'$ for the derivatives with respect to $s$. It follows that
\begin{equation}
\label{eq:2nd1}
A''(0)=\int_{\Sg-\Sg_{0}}\left\{\mnh''(0)
+2\,\mnh'(0)\,|\text{Jac}\,\var_{s}|'(0)
+\mnh\,|\text{Jac}\,\var_{s}|''(0)\right\}d\Sg.
\end{equation}

Now we compute the different terms in \eqref{eq:2nd1}. Let $W=D_UU$ be the acceleration vector field of the variation.  We know that $W=wN$ on $\Sg$, where $w(p):=(\ptl^2\om/\ptl s^2)(0,p)$. The calculus
of $|\text{Jac}\,\var_{s}|'(0)$ and $|\text{Jac}\,\var_{s}|''(0)$ is found
in \cite[Sect.~9]{simon} for $C^2$ variations of a $C^1$ surface in
Euclidean $3$-space.  The arguments can be generalized to the present 
situation. As $U=uN$ on $\Sg$, we deduce
\begin{equation}
\label{eq:jac1}
|\text{Jac}\,\var_{s}|'(0)=\divv_{\Sg}U=(-2H_{R})\,u,
\end{equation}
whereas
\begin{align}
\label{eq:jac2}
|\text{Jac}\,\var_{s}|''(0)=&\divv_{\Sg}W+(\divv_{\Sg}U)^2
+\sum_{i=1}^2|(D_{e_{i}}U)^\bot|^2
\\
\nonumber
&
-\sum_{i=1}^2\escpr{R(U,e_{i})U,e_{i}} 
-\sum_{i,j=1}^2\escpr{D_{e_{i}}U,e_{j}}\,\escpr{D_{e_{j}}U,e_{i}}
\\
\nonumber
=&(-2H_R)\,w+ (4H^2_{R})\,u^2+|\nabla_{\Sg}u|^2- (\ric(N,N)+|B|^2)\,u^2.
\end{align}
In the previous equations  $-2H_{R}=\divv_{\Sg}N$ is the Riemannian
mean curvature of $\Sg$, $\nabla_{\Sg}u$ is the gradient relative to
$\Sg$ of $u$, $\ric$ is the Ricci tensor in $(M,g)$, and $|B|^2$ is
the squared norm of the Riemannian shape operator of $\Sg$. From \eqref{eq:mc2} we infer this relation between $H_{R}$ and~$H$
\begin{equation}
\label{eq:2hr}
2H_{R}=-\divv_{\Sg}N=\escpr{B(Z),Z}+\escpr{B(S),S}=
2H\mnh+\escpr{B(S),S}.
\end{equation}

Let us compute $\mnh'(0)$ and $\mnh''(0)$.  From \eqref{eq:vmnh} and
\eqref{eq:conmute} it follows that
\[
\mnh'(s)=U(\mnh)=\escpr{D_{U}N,\nuh}
+\escpr{N,T}\,\escpr{U,Z}.
\]
The second equality in \eqref{eq:relations} together with $U=uN$ and $D_{U}N=-\nabla_{\Sg}u$ on $\Sg$ implies that
\begin{equation}
\label{eq:dmnh}
|N_{h}|'(0)=-\escpr{N,T}\,S(u).
\end{equation}
We also have
\begin{equation}
\label{eq:d2mnh1}
|N_{h}|''(0)=\escpr{D_{U}D_{U}N,\nuh}+\escpr{D_{U}N,D_{U}\nuh}
+\escpr{N,T}\,\escpr{U,D_{U}Z},
\end{equation}
since $\escpr{U,Z}=\escpr{W,Z}=0$ on $\Sg-\Sg_{0}$.  We can compute
$D_{U}\nuh$ from \eqref{eq:dvnuh}.  By using that
$D_{U}N=-\nabla_{\Sg}u$ and $J(U)=(\mnh u)Z$ on $\Sg-\Sg_{0}$, we get
\begin{equation}
\label{eq:d2mnh12}
\escpr{D_{U}N,D_{U}\nuh}=\mnh^{-1}\,Z(u)^2+\escpr{N,T}\,Z(u)\,u.
\end{equation}
It is also easy to check that $D_{U}Z=\left(\mnh^{-1}Z(u)
+\escpr{N,T}\,u\right)\nuh-(\mnh\, u)\,T$. Thus
\begin{equation}
\label{d2mnh13}
\escpr{U,D_{U}Z}=Z(u)\,u.
\end{equation}
It remains to compute $D_{U}D_{U}N$.  Note that $\{E_{1},E_{2},N\}$
is an orthonormal basis of $T_pM$. Hence
\begin{equation}
\label{eq:d2n1}
D_{U}D_{U}N=\sum_{i=1}^2\escpr{D_{U}D_{U}N,E_{i}}\,E_{i}
+\escpr{D_{U}D_{U}N,N}\,N.
\end{equation}
As $\escpr{N,E_{i}}=0$ along $\ga_p$,  we get
\begin{align}
\label{eq:d2n11}
\escpr{D_{U}D_{U}N,E_{i}}&=-2\escpr{D_{U}N,D_{U}E_{i}}
-\escpr{N,D_{U}D_{U}E_{i}}
\\
\nonumber
&=-2\escpr{D_{U}N,D_{e_{i}}U}+\escpr{R(U,E_{i})U,N}-\escpr{D_{e_{i}}W,N}.
\\
\nonumber
&=2u\,\escpr{\nabla_{\Sg}u,D_{e_{i}}N}-e_i(w).
\end{align}
The second equality follows from \eqref{eq:bracket} and \eqref{eq:jacobi}. For the third one it suffices to take into account $D_{e_{i}}(fN)=e_{i}(f)N+fD_{e_{i}}N$ and $\escpr{R(N,E_{i})N,N}=0$.  Moreover, since $|N|^2=1$ on $\Sg$, we deduce
\begin{equation}
\label{eq:d2n12}
\escpr{D_{U}D_{U}N,N}=-|D_{U}N|^2=-|\nabla_{\Sg}u|^2.
\end{equation}
By substituting \eqref{eq:d2n11} and \eqref{eq:d2n12} into 
\eqref{eq:d2n1}, we obtain
\[
D_{U}D_{U}N=2u\,\sum_{i=1}^2\escpr{\nabla_{\Sg}u,D_{e_{i}}N}\,e_{i}
-\nabla_\Sg w-|\nabla_{\Sg}u|^2\,N.
\]
Now recall that $e_{1}=Z_{p}$ and $e_{2}=S_{p}$.  Then equalities
$D_{S}N=-\escpr{B(Z),S}Z-\escpr{B(S),S}S$ and
$\escpr{S,\nuh}=\escpr{N,T}$ lead us to the following expression
\begin{align}
\label{eq:d2mnh11}
\escpr{D_{U}D_{U}N,\nuh}=&-2\escpr{N,T}\,\escpr{B(Z),S}\,Z(u)\,u
-2\escpr{N,T}\,\escpr{B(S),S}\,S(u)\,u
\\
\nonumber
&-\escpr{N,T}\,S(w)-\mnh\,|\nabla_{\Sg}u|^2.
\end{align}
Equations \eqref{eq:d2mnh11}, \eqref{eq:d2mnh12} and
\eqref{d2mnh13} allow us to compute $\mnh''(0)$ from \eqref{eq:d2mnh1}. By using the resulting formula together with \eqref{eq:dmnh},
\eqref{eq:jac1}, \eqref{eq:jac2} and \eqref{eq:2hr}, we get 
\begin{align}
\label{eq:defini1}
&\mnh''(0)+2\,\mnh'(0)\,|\text{Jac}\,\var_{s}|'(0)
+\mnh\,|\text{Jac}\,\var_{s}|''(0)
\\
\nonumber
&=\mnh^{-1}\,Z(u)^2+2\escpr{N,T}\,Z(u)\,u
-2\escpr{N,T}\,\escpr{B(Z),S}\,Z(u)\,u+4H\mnh\,\escpr{N,T}\,S(u)\,u
\\
\nonumber
&\quad-2H\mnh^2\,w-\mnh\,\escpr{B(S),S}\,w-
\escpr{N,T}\,S(w)+q_{1}\,u^2,
\end{align}
where $q_{1}$ is the function given by
\[
q_{1}=4H^2\,\mnh^3+\mnh\,\escpr{B(S),S}^2+4H\mnh^2\,\escpr{B(S),S}
-\mnh\,(\ric(N,N)+|B|^2).
\]

On the other hand, the derivative $V'(s)$ can be 
computed as in \cite[Lem.~2.1]{bdce}. We have
\[
V'(s)=\int_{\Sg_{s}}\escpr{U,N}\,d\Sg_{s}=
\int_{\Sg}\big{(}\escpr{U,N}\circ\varphi_{s}\big{)}\,|\text{Jac}\,
\var_{s}|\,d\Sg,   
\]
and so 
\begin{equation}
\label{eq:vdosprima}
V''(0)=\int_\Sg\{\escpr{U,N}'(0)+\escpr{U,N}\,|\text{Jac}\,\varphi_s|'(0)\}\,d\Sg.
\end{equation}
By taking into account \eqref{eq:jac1} and
\eqref{eq:2hr}, together with equalities $W=wN$, $U=uN$ and $D_UN=-\nabla_\Sg u$ on $\Sg$,  we deduce 
\begin{align}
\label{eq:defini2}
\escpr{U,N}'(0)+\escpr{U,N}\,|\text{Jac}\,\var_{s}|'(0)=
w-q_{2}\,u^2,
\end{align}
where
\[
q_{2}=2H\mnh+\escpr{B(S),S}. 
\]

We conclude from \eqref{eq:2nd1}, \eqref{eq:defini1}, \eqref{eq:vdosprima} and \eqref{eq:defini2}, that $(A+2HV)''(0)$ is equal to
\begin{align}
\label{eq:nemo}
&\int_{\Sg-\Sg_0}\left\{
\mnh^{-1}\,Z(u)^2+2\escpr{N,T}\,Z(u)\,u
-2\escpr{N,T}\,\escpr{B(Z),S}\,Z(u)\,u\right.
\\
\nonumber
&\left.\qquad\qquad +4H\mnh\,\escpr{N,T}\,S(u)\,u-\escpr{N,T}\,S(w)+aw+
(q_{1}-2Hq_2)\,u^2\right\}d\Sg,
\end{align}
where the function $a$ is given by
\[
a=2H\escpr{N,T}^2-\mnh\,\escpr{B(S),S}.
\]
At this point we apply Lemma~\ref{lem:aux4} below.  After simplifying, the integrand in \eqref{eq:nemo} equals
\begin{align}
\label{eq:largo}
&\mnh^{-1}\,Z(u)^2+\divv_{\Sg}(\xi Z)+\divv_\Sg(\rho S)
+\left\{a+S(\escpr{N,T})+\escpr{N,T}\,q_4\right\}w
\\
\nonumber
&+\bigg\{
q_1-2Hq_2-2H\mnh\,S(\escpr{N,T})+\big(\escpr{B(Z),S}-1\big)\,
\big(\escpr{N,T}\,q_3+Z(\escpr{N,T})\big)
\\
\nonumber
&-\escpr{N,T}\big(2H\mnh\,q_4+2H\,S(\mnh)-Z(\escpr{B(Z),S})
\big)\bigg\}\,u^2,
\end{align}
where $\xi$ and $\rho$ are the functions
\begin{align*}
\xi&=\escpr{N,T}\left(1-\escpr{B(Z),S}\right)u^2,
\\
\rho&=\escpr{N,T}\left(2H\mnh\,u^2-w\right),
\end{align*}
and $\divv_\Sg(\xi Z)$ is understood in the sense of Remark~\ref{re:divv}. On the other hand, the second equality in Lemma~\ref{lem:ruvu} together with formulas \eqref{eq:vnt} and \eqref{eq:vmnh} gives us 
\begin{align*}
\text{Ric}(N,N)&=2\escpr{N,T}^2+(4K-2)\mnh^2,
\\
S(\escpr{N,T})&=\mnh\,\escpr{B(S),S}, \quad S(\mnh)=-\escpr{N,T}\,\escpr{B(S),S},
\\
Z(\escpr{N,T})&=\mnh\,(\escpr{B(Z),S}-1).
\end{align*}
Therefore, after a straightforward calculus in \eqref{eq:largo}, we conclude that
\begin{align*}
(A+2HV)''(0)&=\int_{\Sg-\Sg_0}|N_{h}|^{-1}\left\{Z(u)^2-
\big(|B(Z)+S|^2+4(K-1)|N_{h}|^2\big)\,u^2\right\}d\Sg
\\
&+\int_{\Sg-\Sg_0}\divv_\Sg(\xi Z)\,d\Sg
+\int_{\Sg-\Sg_0}\divv_\Sg(\rho S)\,d\Sg.
\end{align*}
The proof finishes by applying the Riemannian divergence theorem and \cite[Lem.~2.4]{hrr}.
\end{proof}

\begin{lemma}
\label{lem:aux4}
Let $\Sg$ be a $C^2$ orientable surface immersed in a Sasakian sub-Riemannian $3$-manifold $M$. For any $\phi\in C^1(\Sg)$ we have the following equalities in the regular set $\Sg-\Sg_{0}$
\begin{align}
\label{eq:divv1}
\divv_{\Sg}(\phi Z)&=Z(\phi)+q_{3}\,\phi,
\\
\label{eq:divv2}
\divv_{\Sg}(\phi S)&=S(\phi)+q_{4}\,\phi,
\end{align}
where $q_{3}$ and $q_4$ are given by
\begin{align*}
q_{3}&=|N_{h}|^{-1}\,\escpr{N,T}\,(1+\escpr{B(Z),S}),
\\
q_4&=-2H\,\escpr{N,T}.
\end{align*}
Moreover, if $\Sg$ has constant mean curvature $H$ then, in $\Sg-\Sg_0$, 
the functions $\escpr{N,T}$, $\mnh$ and $\escpr{B(Z),S}$ are
$C^\infty$ in the $Z$-direction, and
\begin{equation}
\label{eq:zbzs}
Z(\escpr{B(Z),S})=4|N_{h}|\,\escpr{N,T}\,(1-K-H^2)-
2|N_{h}|^{-1}\escpr{N,T}\,\escpr{B(Z),S}\,(1+\escpr{B(Z),S}).
\end{equation}
\end{lemma}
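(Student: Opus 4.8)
The plan is to establish the three assertions separately, all by computations in the orthonormal frame $\{Z,\nuh,T\}$ of $TM$ adapted to $\Sg$, reducing everything to derivatives along characteristic curves.

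\emph{The divergence formulas.} Since $\{Z,S\}$ is an orthonormal basis of $T_p\Sg$ on $\Sg-\Sg_{0}$, the Leibniz rule gives $\divv_\Sg(\phi Z)=Z(\phi)+\phi\,\divv_\Sg Z$ and $\divv_\Sg(\phi S)=S(\phi)+\phi\,\divv_\Sg S$, so it suffices to identify $q_{3}=\divv_\Sg Z=\escpr{D_Z Z,Z}+\escpr{D_S Z,S}=\escpr{D_S Z,S}$ and $q_{4}=\divv_\Sg S=\escpr{D_Z S,Z}$, the dropped terms vanishing because $|Z|=|S|=1$. First I would record the elementary identities $J(Z)=-\nuh$ (from \eqref{eq:jj}), $J(S)=\escpr{N,T}\,Z$, and $\escpr{\nuh,S}=\escpr{N,T}$, $\escpr{T,S}=-\mnh$ (from \eqref{eq:ese} and \eqref{eq:relations}). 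To get $q_{3}$ I write $Z=J(\nuh)$ and apply \eqref{eq:dujv} to obtain $D_S Z=J(D_S\nuh)-\escpr{N,T}\,T$; evaluating $D_S\nuh$ from \eqref{eq:dvnuh} (its $T$-component drops since $\escpr{Z,S}=0$), using $\escpr{D_S N,Z}=-\escpr{B(Z),S}$ and $\escpr{J(S),Z}=\escpr{N,T}$, and finally $\mnh^2+\escpr{N,T}^2=1$, produces exactly $|N_{h}|^{-1}\escpr{N,T}(1+\escpr{B(Z),S})$. For $q_{4}$ I expand $S=\escpr{N,T}\nuh-\mnh\,T$ by \eqref{eq:ese}, differentiate along $Z$ using \eqref{eq:dvnuh}, \eqref{eq:dut} and $\escpr{D_Z N,Z}=-2H\mnh$ (from \eqref{eq:mc2}), and keep the $Z$-component; all but the term $-2H\escpr{N,T}$ cancels.

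\emph{Smoothness in the $Z$-direction} is immediate from Proposition~\ref{prop:varprop}(iv): on a CMC surface $N$ is $C^\infty$ along each characteristic curve, hence so are $\escpr{N,T}$ and $\mnh$, and since $\escpr{B(Z),S}=-\escpr{D_Z N,S}$ is assembled from $N$ and its first $Z$-derivative, it too is $C^\infty$ in the $Z$-direction. Next I would set up \eqref{eq:zbzs}. Along a characteristic curve $\ga$, which is a CC-geodesic of curvature $H$ by Proposition~\ref{prop:varprop}(iii), the frame evolves by $D_Z Z=2H\nuh$ (from \eqref{eq:dzz}), $D_Z\nuh=-2H\,Z+T$ (from \eqref{eq:dvnuh}) and $D_Z T=-\nuh$ (from \eqref{eq:dut}). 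Expanding $N=\mnh\,\nuh+\escpr{N,T}\,T$ with this evolution and matching components against $D_Z N=-B(Z)=-2H\mnh\,Z-\escpr{B(Z),S}\,S$ yields the first-order relations $Z(\escpr{N,T})=\mnh\,(\escpr{B(Z),S}-1)$ and $Z(\mnh)=\escpr{N,T}\,(1-\escpr{B(Z),S})$, which I would cross-check directly against \eqref{eq:vnt} and \eqref{eq:vmnh}. Differentiating $\escpr{B(Z),S}=-\escpr{D_Z N,S}$ once more along $\ga$ gives $Z(\escpr{B(Z),S})=-\escpr{D_Z D_Z N,S}-\escpr{D_Z N,D_Z S}$, where the last inner product is curvature-free and evaluates to $4H^2\mnh\,\escpr{N,T}$.

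\emph{Injecting the curvature.} The essential point is that $\escpr{D_Z D_Z N,S}$ cannot be obtained from the frame evolution alone: doing so merely reproduces $Z(\escpr{B(Z),S})$ and gives a tautology. The ambient curvature must enter through the CC-geodesic structure. Letting $V$ be the CC-Jacobi field generating the nearby characteristic curves (as in the proof of Proposition~\ref{prop:varprop}(iv)), I would differentiate $\escpr{N,V}=0$ twice along $\ga$ and substitute the CC-Jacobi equation of Lemma~\ref{lem:ccjacobi}(iii), which introduces $\escpr{R(Z,V)Z,N}$; since $Z$ is horizontal, Lemma~\ref{lem:ruvu} evaluates this term and produces the factor $4K-4=4(K-1)$. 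Reconstructing $\escpr{D_Z D_Z N,S}$ from its components along $Z$ and $V$ and inserting the first-order relations above then delivers \eqref{eq:zbzs}. Equivalently, one may use the Codazzi equation for the frame $\{Z,S\}$, in which $\escpr{R(Z,S)Z,N}=4\mnh\,\escpr{N,T}(K-1)$ by Lemma~\ref{lem:ruvu}.

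\emph{Main obstacle.} The real work is precisely this curvature injection, since the first-order frame identities are self-referential. Two bookkeeping pitfalls deserve care. First, the curvature sign: the convention for $R$ adopted here is the opposite of the usual one, so the $K-1$ contribution must enter as $1-K$ and combine with the $-H^2$ term to yield the coefficient $1-K-H^2$ in \eqref{eq:zbzs}. Second, the algebraic collapse: the various quadratic-in-$\escpr{B(Z),S}$ contributions must be organized so that they cancel down to the single Riccati term $2\mnh^{-1}\escpr{N,T}\,\escpr{B(Z),S}(1+\escpr{B(Z),S})$, which is exactly the cancellation $n_t\mnh\,b[(b-1)+(1-b)]=0$ among the cross terms.
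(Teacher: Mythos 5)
Your proposal is correct. The divergence identities and the smoothness claim are handled exactly as in the paper: the paper also reduces to $\divv_\Sg Z=\escpr{D_SZ,S}$ and $\divv_\Sg S=\escpr{D_ZS,Z}=-\escpr{S,D_ZZ}=-2H\escpr{N,T}$, and it records the same formula $D_SZ=\mnh^{-1}(\escpr{B(Z),S}+1-\mnh^2)\nuh-\escpr{N,T}\,T$ that you derive from \eqref{eq:dujv} and \eqref{eq:dvnuh}. The interesting divergence is in the proof of \eqref{eq:zbzs}. The paper writes $Z(\escpr{B(Z),S})=\escpr{N,D_ZD_ZS}-\escpr{B(Z),D_ZS}$ and evaluates $\escpr{N,D_ZD_ZS}$ by splitting $D_ZS=[Z,S]+D_SZ$ and commuting covariant derivatives via the definition of $R$, which produces $\escpr{N,R(Z,S)Z}=4(K-1)\mnh\escpr{N,T}$ plus the terms $2\escpr{B(Z),[Z,S]}$; this is in effect a hand-made Codazzi identity, carefully arranged so that only $Z$-derivatives of quantities known to be smooth along characteristic curves ever appear. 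Your primary route — differentiating $\escpr{N,V}=0$ twice along a characteristic curve, where $V$ is the CC-Jacobi field with $V(0)=S_p$, and substituting the CC-Jacobi equation of Lemma~\ref{lem:ccjacobi}(iii) — is genuinely different and in fact slightly cleaner: at $s=0$ one gets $\escpr{D_ZD_ZN,S}=2\escpr{B(Z),D_SZ}+\escpr{R(Z,S)Z,N}$ directly (the term $2H\escpr{J(V'),N}$ vanishes because $J(D_SZ)$ is proportional to $Z$), and combined with your correct evaluation $\escpr{D_ZN,D_ZS}=4H^2\mnh\escpr{N,T}$ this delivers \eqref{eq:zbzs} with the same curvature input $\escpr{R(Z,S)Z,N}=4(K-1)\mnh\escpr{N,T}$ from Lemma~\ref{lem:ruvu}. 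The regularity needed for the double differentiation is available: $N$ and $V$ are $C^\infty$ along characteristic curves by Proposition~\ref{prop:varprop}(iv) and Lemma~\ref{lem:ccjacobi}(i). What the Jacobi-field route buys is that the curvature enters packaged through the geodesic second-variation structure rather than through a raw commutator of $D$; what the paper's route buys is independence from the variational apparatus of Section~3. Your ``Codazzi'' alternative is essentially the paper's argument, with the caveat that one should not invoke the classical Codazzi equation wholesale (it would require $S$-derivatives of $B$ that a $C^2$ surface need not possess) but rather rederive it along characteristic curves as the paper does.
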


\begin{proof}
For any $C^1$ vector field $V$ tangent to $\Sg-\Sg_0$ we have 
$\divv_\Sg(\phi V)=V(\phi)+(\divv_\Sg V)\phi$. To obtain \eqref{eq:divv1} and 
\eqref{eq:divv2} it suffices to check that $\divv_\Sg Z=q_3$ and $\divv_\Sg S=q_4$. 
Clearly
\[
\divv_\Sg S=\escpr{D_ZS,Z}+\escpr{D_SS,S}=\escpr{D_ZS,Z}=-\escpr{S,D_ZZ}=
-2H\escpr{N,T},
\]
where we have used $|S|^2=1$ and \eqref{eq:dzz}. We also get
\[
\divv_\Sg Z=\escpr{D_ZZ,Z}+\escpr{D_SZ,S}=\escpr{D_SZ,S},
\]
which gives us $\divv_\Sg Z=q_3$ if we take into account that
\begin{equation}
\label{eq:dsz}
D_SZ=\mnh^{-1}\left(\escpr{B(Z),S}+1-\mnh^2\right)\nuh-\escpr{N,T}\,T.
\end{equation}

Recall that if $\Sg$ is a CMC surface then the unit normal $N$ is $C^\infty$ in the $Z$-direction by Proposition~\ref{prop:varprop} (iv).  This implies that $\escpr{N,T}$
and $\mnh$ are $C^\infty$ in the $Z$-direction.  As a consequence, the
vector fields $\nuh$ and $S$ defined in \eqref{eq:nuh} and
\eqref{eq:ese} satisfy the same property.  It follows that
$\escpr{B(Z),S}$ is $C^\infty$ in the $Z$-direction.  To compute
$Z(\escpr{B(Z),S})$ note that
\begin{equation}
\label{eq:zbzs1}
Z(\escpr{B(Z),S})=
-\escpr{D_{Z}D_{Z}N,S}-\escpr{D_{Z}N,D_{Z}S}=\escpr{N,D_{Z}D_{Z}S}-\escpr{B(Z),D_{Z}S}.
\end{equation}
From \eqref{eq:dsz} we see that
$D_{S}Z$ is $C^\infty$ in the $Z$-direction.  Hence
$[Z,S]=D_ZS-D_SZ$ is also $C^\infty$ in the $Z$-direction and
$D_{Z}[Z,S]=D_{Z}D_{Z}S-D_{Z}D_{S}Z$.  Therefore we deduce
\begin{align}
\label{eq:zbzs2}
\escpr{N,D_ZD_ZS}&=\escpr{N,D_{Z}[Z,S]}+\escpr{N,D_{Z}D_{S}Z}
\\
\nonumber
&=\escpr{N,D_{Z}[Z,S]}+\escpr{N,D_{S}D_{Z}Z}-\escpr{N,R(Z,S)Z}
+\escpr{N,D_{[Z,S]}Z}
\\
\nonumber 
&=\escpr{N,D_{Z}[Z,S]}-\escpr{N,R(Z,S)Z}
+\escpr{N,D_{[Z,S]}Z},
\end{align}
where $R$ is the curvature tensor in $(M,g)$.  In the third
equality we have used \eqref{eq:dzz} to get
$D_{S}D_{Z}Z=(2H)D_{S}\nuh$, which is proportional to $Z$ by
\eqref{eq:dvnuh}.  Observe that $[Z,S]$ is tangent to $\Sg-\Sg_0$ since
$\escpr{[Z,S],N}=\escpr{D_{Z}S,N}-\escpr{D_{S}Z,N}=-\escpr{S,D_{Z}N}+
\escpr{Z,D_{S}N}=0$. As a consequence, we obtain
\[
\escpr{N,D_{Z}[Z,S]}=\escpr{B(Z),[Z,S]}, \qquad \escpr{N,D_{[Z,S]}Z}=-\escpr{D_{[Z,S]}N,Z}=\escpr{B(Z),[Z,S]}.
\]
If we put this information into \eqref{eq:zbzs2}, we infer from \eqref{eq:zbzs1} that
\begin{equation}
\label{eq:zbzs3}
Z(\escpr{B(Z),S})=\escpr{B(Z),D_{Z}S}-2\escpr{B(Z),D_{S}Z}-\escpr{N,R(Z,S)Z}.
\end{equation}
To compute the first term in \eqref{eq:zbzs3} note that
$\escpr{D_{Z}S,Z}=-\escpr{S,D_{Z}Z}=-2H\escpr{N,T}$ by \eqref{eq:dzz}, whereas $\escpr{D_{Z}S,S}=0$.  In particular
\begin{equation}
\label{eq:bzdzs}
\escpr{B(Z),D_{Z}S}=-2H\escpr{N,T}\,\escpr{B(Z),Z}=-4H^2\mnh\,
\escpr{N,T},
\end{equation}
where we have used \eqref{eq:mc2}.  For the second term in
\eqref{eq:zbzs3} we use \eqref{eq:dsz}. We get
\begin{equation}
\label{eq:bzdsz}
\escpr{B(Z),D_{S}Z}=\mnh^{-1}\,\escpr{N,T}\,\escpr{B(Z),S}\,
(1+\escpr{B(Z),S}).
\end{equation}
For the third term in \eqref{eq:zbzs3}, we apply the first equality in Lemma~\ref{lem:ruvu} to conclude that
\begin{equation}
\label{eq:rzszn}
\escpr{N,R(Z,S)Z}=4(K-1)\mnh\,\escpr{N,T}.
\end{equation}
The proof finishes by substituting \eqref{eq:bzdzs}, \eqref{eq:bzdsz} and
\eqref{eq:rzszn} into \eqref{eq:zbzs3}.
\end{proof}

\subsection{The index form and a stability criterion}
Here we obtain two integral inequalities for stable surfaces with or without a volume constraint. They will play an important role in Section~\ref{sec:main}.

Let $\Sg$ be a $C^2$ orientable CMC surface immersed in a Sasakian sub-Riemannian $3$-manifold $M$.  For any functions
$u,v\in C_{0}(\Sg-\Sg_{0})$ which are also $C^1$ in the $Z$-direction,
we denote
\begin{equation}
\label{eq:indexform}
\mathcal{Q}(u,v):=\int_{\Sg}\mnh^{-1}\left\{Z(u)\,Z(v)-
\big(|B(Z)+S|^2+4(K-1)\mnh^2\big)\,uv\right\}d\Sg,
\end{equation}
where $\{Z,S\}$ is the orthonormal basis in \eqref{eq:nuh} and
\eqref{eq:ese}, $B$ is the Riemannian shape operator of $\Sg$, and $K$ is the Webster scalar curvature of $M$.
The expression \eqref{eq:indexform} defines a symmetric bilinear form.
We refer to $\mathcal{Q}$ as the \emph{index form} associated to $\Sg$
by analogy with the Riemannian situation studied in \cite{bdce}. Observe that $\mathcal{Q}$  contains horizontal analytical terms and geometric information related to the extrinsic shape of $\Sg$ and the curvature of $M$. 

Now we can prove this result.

\begin{proposition}
\label{prop:stcond1}
Let $\Sg$ be a $C^2$ orientable surface immersed in a 
Sasakian sub-Riemannian $3$-manifold $M$. 
\begin{itemize}
\item[(i)] If $\Sg$ is stable, then the index form in \eqref{eq:indexform} satisfies 
$\mathcal{Q}(u,u)\geq 0$ for any  $u\in C_{0}(\Sg-\Sg_{0})$ which is also $C^1$ in the direction of the characteristic field $Z$.
\item[(ii)] If $\Sg$ is stable under a volume constraint, then the index form in \eqref{eq:indexform} satisfies $\mathcal{Q}(u,u)\geq 0$ for any
mean zero function $u\in C_{0}(\Sg-\Sg_{0})$ which is also $C^1$ in
the direction of $Z$.
\end{itemize}
\end{proposition}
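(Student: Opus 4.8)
The goal is to prove Proposition~\ref{prop:stcond1}, which asserts that the index form $\mathcal{Q}$ is nonnegative on appropriate test functions whenever $\Sg$ is stable (resp.~stable under a volume constraint). The key link between stability and the index form is Theorem~\ref{th:2ndvar}, which identifies $(A+2HV)''(0)$ with $\mathcal{Q}(u,u)$ for the specific class of variations $\var_s(p)=\exp_p(\om(s,p)N_p)$ supported off the singular set, where $u=(\ptl\om/\ptl s)(0,\cdot)$. So the plan is to feed suitable test functions into this formula and read off the sign from the definitions of stability.

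The plan is as follows. First I would fix an arbitrary $u\in C_0(\Sg-\Sg_0)$ that is $C^1$ in the $Z$-direction, and show that $u$ can be realized as the first-order normal velocity of a variation of the type \eqref{eq:newvar} with $Q=N$ supported in $\Sg-\Sg_0$. The natural choice is $\om(s,p)=s\,u(p)$, mirroring the construction already used in the proof of Proposition~\ref{prop:varprop}; one must check that this $\om$ satisfies the regularity hypotheses \eqref{eq:omega1}-\eqref{eq:omega4}, which is where the $C^1$-in-$Z$ condition on $u$ is exactly what is needed so that the mixed derivatives $\ptl^{k+1}\om/\ptl s^k\ptl p$ behave well along characteristic curves. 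For part (i), since $\Sg$ is area-stationary it is minimal, so $H=0$ and $A+2HV=A$; stability gives $A''(0)=\mathcal{Q}(u,u)\geq 0$ directly from Theorem~\ref{th:2ndvar}, with no constraint on $u$.

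For part (ii) the argument is the same but restricted to the admissible competitors. Here $\Sg$ is volume-preserving area-stationary, hence a CMC surface by Proposition~\ref{prop:varprop}(i), with mean curvature $H$ possibly nonzero. I would take $u$ with the additional hypothesis $\int_\Sg u\,d\Sg=0$; by Lemma~\ref{lem:vpvar} there is a volume-preserving variation of the form \eqref{eq:newvar} with $Q=N$ and velocity $uN$, so $V(s)$ is constant and stability under a volume constraint yields $A''(0)\geq 0$. Since $V''(0)=0$ for this variation, Theorem~\ref{th:2ndvar} gives $\mathcal{Q}(u,u)=(A+2HV)''(0)=A''(0)\geq 0$, which is the claim. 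The mean-zero restriction is precisely what the volume constraint forces, explaining the difference between (i) and (ii).

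The main obstacle is matching the regularity: Theorem~\ref{th:2ndvar} requires a $C^1$ function $\om$ satisfying \eqref{eq:omega1}-\eqref{eq:omega4} whose induced $u=(\ptl\om/\ptl s)(0,\cdot)$ is $C^1_0(\Sg)$, whereas the hypotheses of Proposition~\ref{prop:stcond1} only ask that $u$ be continuous with compact support and $C^1$ along characteristic curves. One must therefore verify that the weaker regularity of $u$ still produces an admissible variation, or alternatively justify the second-variation formula by an approximation argument: approximate such $u$ by functions in $C^1_0(\Sg-\Sg_0)$ for which Theorem~\ref{th:2ndvar} applies verbatim, and pass to the limit in $\mathcal{Q}(u,u)$ using that $\mathcal{Q}$ only involves $u$, $Z(u)$ and continuous geometric coefficients (the $C^\infty$-in-$Z$ functions $\escpr{N,T}$, $\mnh$, $\escpr{B(Z),S}$ supplied by Lemma~\ref{lem:aux4}). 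For the volume-preserving case in (ii) one further needs the cut-off in Lemma~\ref{lem:vpvar} to preserve the mean-zero condition through this approximation, so the limiting test functions remain admissible competitors; this bookkeeping, rather than any new estimate, is the delicate point.
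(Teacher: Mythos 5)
Your proposal is correct and follows essentially the same route as the paper: apply Theorem~\ref{th:2ndvar} via Lemma~\ref{lem:vpvar} (resp.\ via $\om(s,p)=s\,u(p)$) to get $\mathcal{Q}(u,u)\geq 0$ for $C^1_0(\Sg-\Sg_0)$ test functions, then extend to functions that are merely continuous and $C^1$ in the $Z$-direction by an $L^2$ approximation of $u$ and $Z(u)$, correcting the approximants to keep them mean zero. The paper carries out exactly this approximation, citing a density lemma from \cite{hrr} and adding a fixed compactly supported function scaled by the defect of the integral to restore the mean-zero condition.
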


\begin{proof}
We only prove (ii). Note that $\Sg$ has constant mean curvature by Proposition~\ref{prop:varprop} (i).  Take $u\in C^1_{0}(\Sg-\Sg_{0})$ with mean zero. By Lemma~\ref{lem:vpvar} we can find a volume-preserving variation as in \eqref{eq:newvar} with $Q_p=N_p$, support contained in $\Sg-\Sg_0$, and associated velocity vector field $U=uN$. We apply Theorem~\ref{th:2ndvar} to deduce that the second derivative of the area functional is
$A''(0)=\mathcal{Q}(u,u)$.  By the stability of $\Sg$ it follows 
that $\mathcal{Q}(u,u)\geq 0$. We have proved that
\begin{equation}
\label{eq:c1}
\mathcal{Q}(u,u)\geq 0,\quad\text{ for any mean zero function } u\in
C^1_{0}(\Sg-\Sg_{0}).
\end{equation}

Now, take a mean zero function $u\in C_{0}(\Sg-\Sg_{0})$ which is also
$C^1$ in the $Z$-direction.  By using \cite[Lem.~2.3]{hrr} and that
$\Sg_{0}$ has vanishing Riemannian area, we can find a compact set
$C\subseteq\Sg-\Sg_{0}$ and a sequence of functions
$\{v_{\eps}\}_{\eps>0}$ in $C^1_{0}(\Sg-\Sg_{0})$ such that
$\{v_{\eps}\}\to u$ in $L^2(\Sg)$, $\{Z(v_{\eps})\}\to Z(u)$ in
$L^2(\Sg)$, while the supports of $v_{\eps}$ and $u$ are contained in
$C$ for any $\eps>0$. Define $u_{\eps}=v_{\eps}+a_{\eps}\mu$, where 
$a_{\eps}=\int_{\Sg}v_{\eps}\,d\Sg$ and $\mu\in C^1_{0}(\Sg)$ has support 
contained in $C$ and integral equal to $-1$. This
provides a sequence of mean zero $C^1$ functions with compact support
contained in $C$  such that $\{u_{\eps}\}\to u$ in $L^2(\Sg)$ and $\{Z(u_{\eps})\}\to Z(u)$ in $L^2(\Sg)$. From here it is not difficult to see that $\lim_{\eps\to 0}\mathcal{Q}(u_{\eps},u_{\eps})=\mathcal{Q}(u,u)$, so that inequality \eqref{eq:c1} proves that $\mathcal{Q}(u,u)\geq 0$.
\end{proof}

\begin{remark}
From \eqref{eq:indexform} we see that the integral inequality $\mathcal{Q}(u,u)\geq 0$ is more restrictive when the ambient manifold has non-negative Webster scalar curvature. In Section~\ref{sec:main} we shall obtain optimal classification results of stable surfaces in such a situation.
\end{remark}

We finish this section with two integration by parts formulas. They are generalizations of those previously obtained in \cite{hrr} for surfaces in the first Heisenberg group.

\begin{proposition}
Let $\Sg$ be a $C^2$ orientable surface inside a Sasakian sub-Riemannian $3$-manifold.  Consider two functions $u\in C_0(\Sg-\Sg_0)$ and $v\in C(\Sg-\Sg_0)$ which are $C^1$ and $C^2$ in the $Z$-direction, respectively. Then we have
\begin{align}
\label{eq:ibp1}
\mathcal{Q}(u,v)&=-\int_{\Sg}u\,\mathcal{L}(v)\,d\Sg,
\\
\label{eq:ibp2}
\int_\Sg\mnh\,Z(u)\,Z(v)\,d\Sg&=-\int_\Sg\mnh\left\{u\,Z(Z(v))+
2\mnh^{-1}\escpr{N,T}\,u\,Z(v)
\right\}d\Sg,
\end{align}
where $\mathcal{L}$ is the second order differential operator defined
by
\begin{align}
\label{eq:lu} 
\mathcal{L}(v):=|N_{h}|^{-1}\big\{Z(Z(v))
&+2\,|N_{h}|^{-1}\,\escpr{N,T}\,\escpr{B(Z),S}\,Z(v)
\\
\nonumber
&+(|B(Z)+S|^2+4(K-1)|N_{h}|^2)\,v\big\}.
\end{align}
\end{proposition}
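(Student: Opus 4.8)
The plan is to obtain both identities from a single integration by parts along the characteristic curves, the two formulas differing only in the weight $\mnh^{\pm1}$ that is carried. The basic device is the following: if $f$ has compact support in $\Sg-\Sg_0$ and is $C^1$ in the $Z$-direction, then $fZ$ is tangent to $\Sg$, compactly supported, and $C^1$ in the $Z$-direction, so Remark~\ref{re:divv} together with \cite[Lem.~2.4]{hrr} gives $\int_\Sg\divv_\Sg(fZ)\,d\Sg=0$, where $\divv_\Sg(fZ)=Z(f)+q_3\,f$ by \eqref{eq:divv1}. I also record the auxiliary identity $Z(\mnh)=\escpr{N,T}\,(1-\escpr{B(Z),S})$, which follows by differentiating the relation $\mnh^2+\escpr{N,T}^2=1$ from \eqref{eq:relations} along $Z$ and using the formula $Z(\escpr{N,T})=\mnh\,(\escpr{B(Z),S}-1)$ already established in the proof of Theorem~\ref{th:2ndvar} (alternatively, directly from \eqref{eq:vmnh}).

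To prove \eqref{eq:ibp2} I would apply the divergence identity with $f=\mnh\,u\,Z(v)$; this is admissible because $u$ is $C^1$ in the $Z$-direction with compact support, $Z(v)$ is $C^1$ in the $Z$-direction since $v$ is $C^2$ in that direction, and $\mnh$ is $C^1$ on $\Sg-\Sg_0$. Expanding
\[
Z(\mnh\,u\,Z(v))=Z(\mnh)\,u\,Z(v)+\mnh\,Z(u)\,Z(v)+\mnh\,u\,Z(Z(v)),
\]
adding $q_3\,\mnh\,u\,Z(v)$ and integrating to zero, one solves for $\int_\Sg\mnh\,Z(u)\,Z(v)\,d\Sg$. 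The coefficient multiplying $u\,Z(v)$ collapses since $Z(\mnh)+q_3\,\mnh=\escpr{N,T}\,(1-\escpr{B(Z),S})+\escpr{N,T}\,(1+\escpr{B(Z),S})=2\escpr{N,T}$, which yields exactly \eqref{eq:ibp2}.

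For \eqref{eq:ibp1} I would repeat the computation with $f=\mnh^{-1}\,u\,Z(v)$. Now $Z(\mnh^{-1})=-\mnh^{-2}\,Z(\mnh)$, so the coefficient of $u\,Z(v)$ becomes $Z(\mnh^{-1})+q_3\,\mnh^{-1}=2\mnh^{-2}\escpr{N,T}\escpr{B(Z),S}$, giving
\[
\int_\Sg\mnh^{-1}Z(u)\,Z(v)\,d\Sg=-\int_\Sg\big(\mnh^{-1}u\,Z(Z(v))+2\mnh^{-2}\escpr{N,T}\escpr{B(Z),S}\,u\,Z(v)\big)\,d\Sg.
\]
Substituting this into the definition \eqref{eq:indexform} of $\mathcal{Q}(u,v)$ and factoring out $-u\,\mnh^{-1}$ reproduces precisely the operator $\mathcal{L}(v)$ of \eqref{eq:lu}, establishing \eqref{eq:ibp1}.

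The only genuinely delicate point is the justification of the divergence theorem in this reduced-regularity setting: $u$ and $Z(v)$ are $C^1$ only along the characteristic curves and not on all of $\Sg$, so the classical Riemannian divergence theorem does not apply verbatim and one must use the version for $Z$-direction-differentiable fields from \cite[Lem.~2.4]{hrr}, noting that $\divv_\Sg Z=q_3$ is continuous on $\Sg-\Sg_0$ because the shape operator $B$ is continuous for a $C^2$ surface. The remaining steps are routine expansions together with the algebraic simplification of the two coefficients $Z(\mnh)+q_3\,\mnh$ and $Z(\mnh^{-1})+q_3\,\mnh^{-1}$.
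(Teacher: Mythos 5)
Your argument is correct and follows essentially the same route as the paper, which simply defers to the analogous integration-by-parts computations in \cite[Prop.~3.14 and Lem.~3.17]{hrr} together with the generalized divergence theorem \cite[Lem.~2.4]{hrr}; your choice of $f=\mnh^{\pm 1}u\,Z(v)$ in the identity $\int_\Sg\divv_\Sg(fZ)\,d\Sg=0$, combined with $\divv_\Sg Z=q_3$ and $Z(\mnh)=\escpr{N,T}(1-\escpr{B(Z),S})$, is exactly the intended computation, and the two coefficient simplifications you record reproduce \eqref{eq:ibp2} and the operator $\mathcal{L}$ in \eqref{eq:lu} correctly.
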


\begin{proof}
Both formulas can be proved as in  \cite[Prop.~3.14 and Lem.~3.17]{hrr} by using previous computations and the generalized divergence theorem in \cite[Lem.~2.4]{hrr}.
\end{proof}

\section{Classification results for stable surfaces with empty singular set}
\label{sec:main}
\setcounter{equation}{0}

In this section we prove the main results of the paper. They concern stable surfaces with empty singular set inside Sasakian sub-Riemannian $3$-manifolds, with special interest in the model spaces $\e$ introduced in Section~\ref{subsec:sf}.

\subsection{Existence of stable surfaces and curvature}
\label{subsec:main1}
Here we obtain some immediate consequences of Proposition~\ref{prop:stcond1} showing  the relevant role of the Webster scalar curvature in relation to the existence of stable surfaces.

In \cite[Ex.~2 in Sect.~7]{chmy} it is observed that the vertical Clifford torus $\{(z,w)\in\mathbb{S}^3;\,|z|^2=|w|^2\}$ is an unstable area-stationary surface of the sub-Riemannian $3$-sphere. The next result generalizes this fact to a more general situation.

\begin{proposition}
\label{prop:unexistence}
Let $\Sg$ be a $C^2$ orientable, compact, area-stationary surface immersed inside a Sasakian sub-Riemannian $3$-manifold $M$. If $\Sg_0=\emptyset$ and the Webster scalar curvature of $M$ satisfies $K\geq 1$ over $\Sg$, then $\Sg$ is unstable.
\end{proposition}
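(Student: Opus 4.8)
The plan is to exhibit a single variation of $\Sg$ with strictly negative second derivative of the area, using the constant test function $u\equiv 1$ in the second variation formula of Theorem~\ref{th:2ndvar}.

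First I would record that, since $\Sg$ is area-stationary, Proposition~\ref{prop:varprop}(i) makes it minimal, so $H=0$. Because $\Sg$ is compact and $\Sg_0=\emptyset$, the constant $u\equiv 1$ lies in $C_0(\Sg-\Sg_0)$ and is trivially $C^1$ in the $Z$-direction, hence admissible. Taking the variation $\varphi_s(p)=\exp_p(sN_p)$ (that is $\om(s,p)=s$, so $u=1$ and $w=0$) in Theorem~\ref{th:2ndvar} gives $A''(0)=(A+2HV)''(0)=\mathcal{Q}(1,1)$. Since $Z(1)=0$ and, in the minimal case, $B(Z)=\escpr{B(Z),S}\,S$ so that $|B(Z)+S|^2=(\escpr{B(Z),S}+1)^2$, formula \eqref{eq:indexform} yields
\[
\mathcal{Q}(1,1)=-\int_{\Sg}\mnh^{-1}\big\{(\escpr{B(Z),S}+1)^2+4(K-1)\,\mnh^2\big\}\,d\Sg.
\]
As $K\geq 1$ on $\Sg$, both terms in braces are nonnegative, so $\mathcal{Q}(1,1)\leq 0$; if I can upgrade this to a strict inequality, then the variation above satisfies $A''(0)<0$ and $\Sg$ is unstable.

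The hard part is therefore to rule out the equality case $\mathcal{Q}(1,1)=0$. In that case the integrand vanishes identically and, since $\mnh>0$ everywhere ($\Sg_0=\emptyset$), this forces $\escpr{B(Z),S}\equiv -1$ and $K\equiv 1$ on $\Sg$. My plan here is to contradict $\Sg_0=\emptyset$ along the characteristic curves. Writing $f=\escpr{N,T}$ and $g=\mnh$, the identity $Z(\escpr{N,T})=\mnh\,(\escpr{B(Z),S}-1)$ from the proof of Theorem~\ref{th:2ndvar}, together with $f^2+g^2=1$ from \eqref{eq:relations} (which gives $g'=-ff'/g$), yields along any characteristic curve parametrized by arc length (so that the prime is $Z$) the linear system $f'=-2g$, $g'=2f$. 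Hence $g''+4g=0$, so $g(s)=g(0)\cos(2s)+f(0)\sin(2s)$ is a sinusoid of amplitude $\sqrt{f(0)^2+g(0)^2}=1$. Consequently $g=\mnh$ must attain the value $0$ at some parameter, which contradicts $\Sg_0=\emptyset$.

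I expect the only delicate point to be the completeness of the characteristic curves required in the last step, namely that they are defined for all $s\in\rr$. I would justify this from the compactness of $\Sg$ together with Proposition~\ref{prop:varprop}(iii): each characteristic curve is a CC-geodesic of $M$ of curvature $H=0$, hence smooth, and being an integral curve of the unit field $Z$ it remains in the compact regular set $\Sg-\Sg_0=\Sg$, so it extends to all of $\rr$. Granting this, the equality case is impossible, whence $\mathcal{Q}(1,1)<0$ and $\Sg$ is unstable.
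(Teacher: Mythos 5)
Your proof is correct and follows essentially the same route as the paper: the test function $u\equiv 1$ in the second variation formula forces $\escpr{B(Z),S}\equiv -1$ when $K\geq 1$, and the resulting ODE along the complete characteristic curves yields a contradiction with $\Sg_0=\emptyset$. The only cosmetic difference is that you phrase the contradiction as $\mnh$ being a unit-amplitude sinusoid that must vanish, whereas the paper observes that $\escpr{N,T}$ would be a sinusoid with everywhere-negative derivative $-2\mnh$; these are the same computation.
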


\begin{proof}
Suppose by contradiction that $\Sg$ is stable. Then Proposition~\ref{prop:stcond1} (i) implies that the index form defined in \eqref{eq:indexform} satisfies $\mathcal{Q}(u,u)\geq 0$ for the function $u=1$. As a consequence, we get
\[
-\int_\Sg\mnh^{-1}\left\{|B(Z)+S|^2+4(K-1)\mnh^2\right\}d\Sg\geq 0.
\]
In particular, we deduce that $B(Z)=-S$ over $\Sg$. Let us see that this is not possible.
Take a characteristic curve $\ga$ of $\Sg$. As $\Sg$ is compact, the curve $\ga$ is well-defined on the whole real line. Let $f:\rr\to\rr$ be the function $f(s):=\escpr{N,T}(\ga(s))$. By taking into account \eqref{eq:vnt} and \eqref{eq:vmnh}, we have
\begin{align*}
f'(s)&=Z(\escpr{N,T})(\ga(s))=-2\mnh(\ga(s))<0,
\\
f''(s)&=-2\,Z(\mnh)(\ga(s))=-4f(s).
\end{align*}
It follows that $f(s)=a\cos(2s)+b\sin(2s)$ which contradicts that $f'(s)<0$ for any $s\in\rr$.
\end{proof}

For the vertical surfaces introduced in Section~\ref{subsec:vertical} we can improve the previous proposition and establish a stability result.

\begin{proposition}
\label{prop:stvert}
Let $\Sg$ be a $C^2$ vertical surface of constant mean curvature $H$ inside a Sasakian sub-Riemannian $3$-manifold $M$ with Webster scalar curvature $K$.
\begin{itemize}
\item[(i)] If $\Sg$ is compact, minimal, and $K>0$ over $\Sg$, then $\Sg$ is unstable.
\item[(ii)] If $H^2+K\leq 0$ over $\Sg$ then the index form defined in \eqref{eq:indexform} satisfies  $\mathcal{Q}(u,u)\geq 0$ for any $u\in C_0(\Sg)$ which is also $C^1$ in the direction of the characteristic field $Z$.
\end{itemize}
\end{proposition}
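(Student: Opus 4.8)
The plan is to use the rigidity of a vertical surface to reduce the index form $\mathcal{Q}$ in \eqref{eq:indexform} to a completely explicit expression, from which both statements drop out.

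First I would extract the pointwise consequences of verticality. Since $T$ is tangent to $\Sg$, the last identity in \eqref{eq:relations}, namely $T^\top=-\mnh\,S$, forces $\mnh=1$ and hence $\escpr{N,T}=0$ everywhere on $\Sg$. Thus $N=\nuh$ is horizontal, the singular set is empty, and \eqref{eq:ese} gives $S=-T$.

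Next I would compute $B(Z)$ in the orthonormal frame $\{Z,\nuh,T\}$. Its $Z$-component is $2H$ by \eqref{eq:mc2} (using $\mnh=1$); its $\nuh$-component vanishes because $\escpr{B(Z),N}=-\tfrac{1}{2}Z(|N|^2)=0$; and its $T$-component equals $-\escpr{D_ZN,T}=\escpr{N,D_ZT}=\escpr{N,J(Z)}$, where I used $\escpr{N,T}=0$ together with \eqref{eq:dut}. Since $J(Z)=J^2(\nuh)=-\nuh$ by \eqref{eq:jj}, this component is $-1$. Hence $B(Z)=2HZ-T$, so that $B(Z)+S=2HZ-2T$ and $|B(Z)+S|^2=4(H^2+1)$. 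Substituting this and $\mnh=1$ into \eqref{eq:indexform}, and using $4(H^2+1)+4(K-1)=4(H^2+K)$, the index form collapses to
\[
\mathcal{Q}(u,u)=\int_\Sg\big\{Z(u)^2-4(H^2+K)\,u^2\big\}\,d\Sg.
\]

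With this formula both claims are immediate. For (ii), the hypothesis $H^2+K\le 0$ makes the integrand a sum of two non-negative terms, whence $\mathcal{Q}(u,u)\ge 0$ for every admissible $u$. For (i), a minimal vertical surface has empty singular set and is therefore area-stationary, so I argue by contradiction as in Proposition~\ref{prop:unexistence}: were $\Sg$ stable, Proposition~\ref{prop:stcond1}(i) would give $\mathcal{Q}(u,u)\ge 0$ for all admissible $u$, but the constant function $u=1$ (admissible since $\Sg$ is compact) yields $\mathcal{Q}(1,1)=-4\int_\Sg K\,d\Sg<0$ when $K>0$, a contradiction. The only delicate point is the determination of $B(Z)$, in particular the sign and value $\escpr{B(Z),T}=-1$ of its vertical component, where the Sasakian identity \eqref{eq:dut} and the action of $J$ are essential; the rest is sign bookkeeping.
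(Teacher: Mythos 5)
Your proof is correct and follows essentially the same route as the paper: both reduce the index form on a vertical surface to $\mathcal{Q}(u,u)=\int_\Sg\big(Z(u)^2-4(H^2+K)\,u^2\big)\,d\Sg$ and then read off (ii) directly and (i) from Proposition~\ref{prop:stcond1}~(i) with $u=1$. The only (cosmetic) difference is that you compute the vertical component $\escpr{B(Z),T}=-1$ directly from \eqref{eq:dut} and $J(Z)=-\nuh$, whereas the paper extracts the equivalent identity $\escpr{B(Z),S}=1$ from \eqref{eq:vnt} applied to the constant function $\escpr{N,T}\equiv 0$.
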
 

\begin{proof}
As $\Sg$ is vertical we have $\escpr{N,T}=0$ and $\mnh=1$ over $\Sg$. By using \eqref{eq:vnt} we obtain $Z(\escpr{N,T})=\mnh(\escpr{B(Z),S}-1)$, so that $\escpr{B(Z),S}=1$ over $\Sg$. By \eqref{eq:mc2} it follows that $B(Z)=(2H)Z+S$. Therefore we get
\[
\mathcal{Q}(u,u)=\int_\Sg\left(Z(u)^2-4(H^2+K)\,u^2\right)d\Sg,
\]
for any $u\in C_0(\Sg)$ which is also $C^1$ in the $Z$-direction. To deduce (i) it is enough to apply Proposition~\ref{prop:stcond1} (i) with $u=1$. The proof of (ii) is immediate.
\end{proof}

\begin{remark}
The second statement in Proposition~\ref{prop:stvert} might suggest that any CMC vertical surface $\Sg$ satisfying $H^2+K\leq 0$ is stable. In precise terms we have proved that  $\Sg$ is stable \emph{under variations as in \eqref{eq:newvar} with $Q_p=N_p$}. 
In Theorem~\ref{th:main} we will show that the condition $H^2+K\leq 0$ is necessary for any CMC surface with empty singular set inside a model space $\e$ to be stable under a volume constraint.
\end{remark}

\subsection{Stable surfaces under a volume constraint in $\e$}
\label{subsec:main2}
Here we classify stable surfaces under a volume constraint with empty singular set inside the sub-Riemannian $3$-space forms $\e$. We will show that the situation is similar to the Riemannian case. In fact, we will be able to characterize exactly such surfaces provided $\kappa\geq 0$, whereas for $\kappa<0$ the complete description will remain open.

The study of the stability condition in $\e$ has focused on stable area stationary surfaces in the first Heisenberg group $\mathbb{M}(0)$, see Section~\ref{sec:intro} for details and references. In \cite[Thm.~4.7]{hrr} and \cite[Thm.~C]{dgnp-stable} it is proved that a complete $C^2$ stable area-stationary surface $\Sg$ with empty singular set in $\mathbb{M}(0)$ must be a vertical plane. The proof follows from the stability inequality in Proposition~\ref{prop:stcond1} (i) by using a suitable cut-off of the function $\mnh$. This function is geometrically associated to the variation of $\Sg$ by equidistant surfaces with respect to the Carnot-Carath\'eodory distance. The aforementioned result leaves open the characterization of complete stable surfaces \emph{under a volume constraint} in $\mathbb{M}(0)$. Inspired by the previous argument, we treat this question by inserting inside the stability inequality in Proposition~\ref{prop:stcond1} (ii) a \emph{mean zero} cut-off of the function $\mnh$.

We first compute the value of the index form for a function $u=f\mnh$.

\begin{lemma}
\label{lem:fmnh}
Let $\Sg$ be a $C^2$ orientable CMC surface immersed inside a Sasakian sub-Riemannian $3$-manifold $M$. For any
function $f\in C_{0}(\Sg-\Sg_{0})$ which is also $C^1$ in the
$Z$-direction, we have
\begin{equation}
\label{eq:indexform3}
\mathcal{Q}(f\mnh,f\mnh)=\int_{\Sg}\mnh\left\{Z(f)^2-
\mathcal{L}(|N_{h}|)\,f^2\right\}d\Sg,
\end{equation}
where $\mathcal{Q}$ is the index form defined in \eqref{eq:indexform},
and $\mathcal{L}$ is the differential operator in
\eqref{eq:lu}.

Moreover, suppose that we are given $u\in C_{0}(\Sg-\Sg_{0})$ and $v\in
C(\Sg-\Sg_{0})$ which are $C^1$ and $C^2$ in the $Z$-direction,
respectively.  If $v$ never vanishes, then we get
\begin{align}
\label{eq:indexform4b}
\mathcal{Q}(uv^{-2}\mnh,uv^{-2}\mnh)&=\int_{\Sg}\mnh\,
v^{-4}\,Z(u)^2\,d\Sg-\int_{\Sg}\mnh\,\mathcal{L}(\mnh)\,(uv^{-2})^2\,d\Sg
\\
\nonumber
&+\int_{\Sg}\mnh\,\bigg\{Z(v^{-2})^2-\frac{1}{2}\,Z(Z(v^{-4}))
-\frac{\escpr{N,T}}{\mnh}\,Z(v^{-4})
\,\bigg\}\,u^2\,d\Sg.
\end{align}
\end{lemma}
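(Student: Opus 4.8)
The plan is to isolate a single algebraic identity for the index form and then specialize it twice. Writing $\mathcal{Q}(u,v)=\int_\Sg\mnh^{-1}\{Z(u)\,Z(v)-V\,uv\}\,d\Sg$ with $V:=|B(Z)+S|^2+4(K-1)\mnh^2$, I would first establish the ``ground-state substitution'' identity
\[
\mathcal{Q}(f\phi,f\phi)=\int_\Sg\mnh^{-1}\,\phi^2\,Z(f)^2\,d\Sg+\mathcal{Q}(f^2\phi,\phi),
\]
valid for any $\phi$ that is $C^2$ in the $Z$-direction and any $f\in C_0(\Sg-\Sg_0)$ that is $C^1$ in the $Z$-direction. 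This follows by expanding $Z(f\phi)^2=\phi^2Z(f)^2+2f\phi\,Z(f)Z(\phi)+f^2Z(\phi)^2$ and $Z(f^2\phi)Z(\phi)=2f\phi\,Z(f)Z(\phi)+f^2Z(\phi)^2$ directly in the definition \eqref{eq:indexform}: the potential terms carrying $V$ cancel, as do the terms $2f\phi\,Z(f)Z(\phi)$ and $f^2Z(\phi)^2$, leaving exactly $\int_\Sg\mnh^{-1}\phi^2Z(f)^2\,d\Sg$. Rewriting $\mathcal{Q}(f^2\phi,\phi)=-\int_\Sg f^2\phi\,\mathcal{L}(\phi)\,d\Sg$ by the integration-by-parts formula \eqref{eq:ibp1} turns this into
\[
\mathcal{Q}(f\phi,f\phi)=\int_\Sg\mnh^{-1}\,\phi^2\,Z(f)^2\,d\Sg-\int_\Sg f^2\,\phi\,\mathcal{L}(\phi)\,d\Sg.
\]

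To obtain \eqref{eq:indexform3} I would simply put $\phi=\mnh$. Since $\mnh$ is $C^\infty$ in the $Z$-direction for a CMC surface, by Proposition~\ref{prop:varprop}(iv) and Lemma~\ref{lem:aux4}, the hypotheses are met, and $\mnh^{-1}\phi^2=\mnh$ gives directly $\int_\Sg\mnh\{Z(f)^2-\mathcal{L}(\mnh)\,f^2\}\,d\Sg$.

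For \eqref{eq:indexform4b} I would use the same identity with $\phi=\mnh$ but now $f=uv^{-2}$, which is admissible because $v$ never vanishes, so $v^{-2}$ and $v^{-4}$ are $C^2$ in the $Z$-direction. This produces at once the term $-\int_\Sg\mnh\,\mathcal{L}(\mnh)\,(uv^{-2})^2\,d\Sg$ together with the leading term $\int_\Sg\mnh\,Z(uv^{-2})^2\,d\Sg$. The remaining task is to unfold the latter: expanding $Z(uv^{-2})=v^{-2}Z(u)+u\,Z(v^{-2})$ gives the diagonal piece $\int_\Sg\mnh\,v^{-4}Z(u)^2\,d\Sg$, a pure-$u^2$ piece $\int_\Sg\mnh\,Z(v^{-2})^2\,u^2\,d\Sg$, and a cross term which, using $2v^{-2}Z(v^{-2})=Z(v^{-4})$ and $u\,Z(u)=\tfrac12Z(u^2)$, equals $\tfrac12\int_\Sg\mnh\,Z(u^2)\,Z(v^{-4})\,d\Sg$. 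I would then integrate this cross term by parts through \eqref{eq:ibp2}, with $u^2$ and $v^{-4}$ in the two slots, so as to move the derivative off $u^2$; this yields exactly $-\tfrac12\int_\Sg\mnh\,u^2\,Z(Z(v^{-4}))\,d\Sg-\int_\Sg\escpr{N,T}\,u^2\,Z(v^{-4})\,d\Sg$, and collecting the three $u^2$-contributions reconstitutes the bracketed factor $Z(v^{-2})^2-\tfrac12Z(Z(v^{-4}))-\mnh^{-1}\escpr{N,T}Z(v^{-4})$ appearing in \eqref{eq:indexform4b}.

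The entire conceptual content sits in the first identity; the only real care needed is bookkeeping of the regularity hypotheses so that \eqref{eq:ibp1} and \eqref{eq:ibp2} apply, namely that every product retains compact support in $\Sg-\Sg_0$ and the required directional smoothness, which holds because $\mnh$ is $C^\infty$ along characteristic curves and $v$ is nonvanishing and $C^2$ in the $Z$-direction, together with the routine chain-rule relation $Z(v^{-4})=2v^{-2}Z(v^{-2})$ used to match coefficients. I expect no serious obstacle here; in particular the approach deliberately avoids computing $Z(\mnh)$ or $\mathcal{L}(\mnh)$ explicitly, letting $\mathcal{L}(\mnh)$ pass through as a black box that appears identically on both sides.
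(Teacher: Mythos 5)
Your proof is correct and follows essentially the same route the paper takes: the paper's own proof consists of invoking \eqref{eq:ibp1} to derive \eqref{eq:indexform3} (following the analogous computation in the Heisenberg case) and then obtaining \eqref{eq:indexform4b} from \eqref{eq:indexform3} via \eqref{eq:ibp2}, which is exactly your ground-state substitution identity followed by the expansion of $Z(uv^{-2})^2$ and the integration by parts of the cross term. You have simply written out in full the details the paper delegates to the cited reference, and the regularity bookkeeping you describe is the right one.
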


\begin{proof}
Note that $\mathcal{L}(\mnh)$ is well-defined since $\mnh$ is $C^\infty$ in the
$Z$-direction by Lemma~\ref{lem:aux4}. To obtain \eqref{eq:indexform3} it suffices to follow the proof of \cite[Lem.~4.1]{hrr} and take into account formula \eqref{eq:ibp1}. Equation \eqref{eq:indexform4b} comes from \eqref{eq:indexform3} with the help of \eqref{eq:ibp2}, see the proof of \cite[Lem.~4.3]{hrr} for details.
\end{proof}
  
Equality \eqref{eq:indexform3} indicates that, for a function $u=f\mnh$ with mean
zero and compact support off of the singular set, the stability inequality $\mathcal{Q}(u,u)\geq 0$ in Proposition~\ref{prop:stcond1} (ii) is more restrictive provided $\mathcal{L}(\mnh)\geq 0$.  By this reason it is interesting to compute $\mathcal{L}(\mnh)$ and to discuss its sign.

\begin{lemma}
\label{lem:modnh}
Let $\Sg$ be a $C^2$ orientable surface immersed in a Sasakian sub-Riemannian $3$-manifold $M$.  If $\Sg$ has constant mean curvature $H$ then, in the regular set $\Sg-\Sg_{0}$, we have
\begin{equation}
\label{eq:lnh}
\mathcal{L}(|N_{h}|)=4\big(|N_{h}|^{-2}\,\escpr{B(Z),S}+H^2+K-1\big),
\end{equation}
where $\mathcal{L}$ is the second order operator in \eqref{eq:lu}, $\{Z,S\}$ is the basis defined in \eqref{eq:nuh} and \eqref{eq:ese}, $B$ is the Riemannian shape operator of $\Sg$, and $K$ is the Webster scalar curvature of $M$.
\end{lemma}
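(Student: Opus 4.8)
The plan is to evaluate the definition \eqref{eq:lu} of the operator $\mathcal{L}$ at $v=\mnh$. This requires three ingredients: the first derivative $Z(\mnh)$, the second derivative $Z(Z(\mnh))$, and the zeroth-order coefficient $|B(Z)+S|^2$. Once these are assembled, the whole computation reduces to an algebraic simplification governed by the pointwise constraint $\mnh^2+\escpr{N,T}^2=1$ recorded in \eqref{eq:relations}.

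First I would compute $Z(\mnh)$. Differentiating $\mnh^2+\escpr{N,T}^2=1$ in the $Z$-direction and inserting the identity $Z(\escpr{N,T})=\mnh\,(\escpr{B(Z),S}-1)$ from Lemma~\ref{lem:aux4} gives
\[
Z(\mnh)=-\frac{\escpr{N,T}}{\mnh}\,Z(\escpr{N,T})=\escpr{N,T}\,\big(1-\escpr{B(Z),S}\big).
\]
(Equivalently this follows directly from \eqref{eq:vmnh} with $v=Z$, using that $B(Z)$ is tangent to $\Sg$, that $(\nuh)^\top=\escpr{N,T}\,S$, and that $J(Z)=-\nuh$.) Next I would differentiate this once more by the Leibniz rule, substituting $Z(\escpr{N,T})$ again together with the expression \eqref{eq:zbzs} for $Z(\escpr{B(Z),S})$, which is exactly the nontrivial input supplied by Lemma~\ref{lem:aux4}. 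For the zeroth-order term, since $\escpr{B(Z),Z}=2H\mnh$ by \eqref{eq:mc2} and $\{Z,S\}$ is orthonormal, one has $B(Z)=2H\mnh\,Z+\escpr{B(Z),S}\,S$, hence
\[
|B(Z)+S|^2=4H^2\mnh^2+\big(\escpr{B(Z),S}+1\big)^2.
\]

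Finally I would substitute these three ingredients into \eqref{eq:lu}, multiply through by $\mnh$, and collect terms. The main obstacle is purely the bookkeeping of this simplification: after substitution one obtains a sum of terms in $\mnh$, $\escpr{N,T}$ and $\escpr{B(Z),S}$ of mixed homogeneity, and the factor $4(H^2+K-1)$ only emerges after repeated use of $\escpr{N,T}^2=1-\mnh^2$ to merge the $H^2$-terms, the $K$-terms, and the constant terms respectively, while the surviving $\escpr{B(Z),S}$-terms combine into the single contribution $4\mnh^{-2}\escpr{B(Z),S}$. Carrying out these cancellations yields $\mnh\,\mathcal{L}(\mnh)=4\mnh^{-1}\escpr{B(Z),S}+4\mnh\,(H^2+K-1)$, and dividing by $\mnh$ produces the claimed formula \eqref{eq:lnh}. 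I would keep the intermediate expression organized by the power of $\mnh$ appearing, so that the three pairings $\mnh^2+\escpr{N,T}^2=1$ can be applied systematically rather than ad hoc.
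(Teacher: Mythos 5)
Your proposal is correct and follows essentially the same route as the paper: compute $Z(\mnh)=\escpr{N,T}(1-\escpr{B(Z),S})$, differentiate again using \eqref{eq:zbzs} for $Z(\escpr{B(Z),S})$, insert $|B(Z)+S|^2=4H^2\mnh^2+(1+\escpr{B(Z),S})^2$ from \eqref{eq:mc2}, and simplify with $\mnh^2+\escpr{N,T}^2=1$. The only quibble is a citation detail: the identity $Z(\escpr{N,T})=\mnh\,(\escpr{B(Z),S}-1)$ comes from \eqref{eq:vnt} (it is recorded in the proof of Theorem~\ref{th:2ndvar}) rather than from Lemma~\ref{lem:aux4}, which supplies only the smoothness in the $Z$-direction and formula \eqref{eq:zbzs}.
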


\begin{proof}
By Lemma~\ref{lem:aux4} we deduce that $\mnh$ is $C^\infty$ in the $Z$-direction, so that $\mathcal{L}(\mnh)$ is well-defined.  We compute $Z(|N_{h}|)$ and $Z(Z(|N_{h}|))$.  By \eqref{eq:vmnh} and \eqref{eq:relations} we have
\begin{equation}
\label{eq:znh}
Z(|N_{h}|)=\escpr{N,T}\,\big(1-\escpr{B(Z),S}\big),
\end{equation}
and so
\[
Z(Z(|N_{h}|))=Z(\escpr{N,T})\,\big(1-\escpr{B(Z),S}\big)
-\escpr{N,T}\,Z(\escpr{B(Z),S}).
\]
Now we use \eqref{eq:vnt} and \eqref{eq:zbzs}. 
After a straightforward calculus, we get
\begin{align}
\label{eq:zznh}
Z(Z(|N_{h}|))&=\left(4(H^2+K)-5\right)|N_{h}|-4(H^2+K-1)\,|N_{h}|^3
\\
\nonumber
&+2|N_{h}|^{-1}\,\escpr{B(Z),S}
+2|N_{h}|^{-1}\,\escpr{B(Z),S}^2-3|N_{h}|\,\escpr{B(Z),S}^2.
\end{align}
By substituting \eqref{eq:znh} and \eqref{eq:zznh} into
\eqref{eq:lu}, we obtain
\begin{align*}
\mathcal{L}(|N_{h}|)&=4(H^2+K)-5-4(H^2+K-1)\,\mnh^2
\\
&+2\mnh^{-2}\,\escpr{B(Z),S}+2\mnh^{-2}\,\escpr{B(Z),S}^2
-3\escpr{B(Z),S}^2
\\
&+2\mnh^{-2}\,\escpr{N,T}^2\,\escpr{B(Z),S}\,\big(1-\escpr{B(Z),S}\big)
+|B(Z)+S|^2+4(K-1)\,\mnh^2.
\end{align*}
Finally note that \eqref{eq:mc2} implies
$|B(Z)+S|^2=4H^2\mnh^2+(1+\escpr{B(Z),S})^2$. The claim follows by using this equality into the previous equation, and simplifying.
\end{proof}

Now we are ready to discuss the sign of $\mathcal{L}(\mnh)$. Observe that if $\Sg$ is a CMC vertical surface inside a Sasakian sub-Riemannian $3$-manifold then $\mathcal{L}(\mnh)=4(H^2+K)$ by \eqref{eq:lnh} and the computations in the proof of Proposition~\ref{prop:stvert}. Hence $\mathcal{L}(\mnh)\geq 0$ if and only if $H^2+K\geq 0$, with equality if and only if $H^2+K=0$. In the next proposition we obtain a similar comparison for any CMC surface $\Sg$ with empty singular set in $\e$. The result suggests, somehow, that if $H^2+K\geq 0$ then $\mathcal{L}(\mnh)$ measures how far is $\Sg$ from being a vertical surface. The proof is based on the analysis of the CC-Jacobi field associated to the characteristic curves of $\Sg$.

\begin{proposition}
\label{prop:lnh>0}
Let $M$ be a complete Sasakian sub-Riemannian $3$-manifold with constant Webster scalar curvature $K$. Consider a $C^2$ complete orientable surface $\Sg$ immersed in $M$ with empty singular set and constant mean curvature $H$. If $H^2+K\geq 0$ then the operator $\mathcal{L}$ defined in \eqref{eq:lu} satisfies $\mathcal{L}(\mnh)\geq 0$ on $\Sg$.  Moreover, $\mathcal{L}(\mnh)=0$ over $\Sg$ if and only if $\Sg$ is a vertical surface with $H^2+K=0$.
\end{proposition}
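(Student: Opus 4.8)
The plan is to reduce the statement, via Lemma~\ref{lem:modnh}, to a pointwise sign condition and then analyze it along characteristic curves using the CC-Jacobi fields of Section~\ref{sec:ccgeo}. By \eqref{eq:lnh} we have $\mathcal{L}(\mnh)=4\big(\mnh^{-2}\escpr{B(Z),S}+H^2+K-1\big)$, so it suffices to prove that $P:=\escpr{B(Z),S}-(1-H^2-K)\,\mnh^2\geq 0$ on $\Sg$ and to detect when it vanishes. Since $\Sg_0=\emptyset$, everything can be done along a single oriented characteristic curve $\ga$. By completeness of $\Sg$ the unit field $Z$ is complete, so $\ga$ is defined on all of $\rr$, and by Proposition~\ref{prop:varprop} (iii) it is a complete CC-geodesic of curvature $H$ in $M$.

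First I would bring in the CC-Jacobi field. As in the proof of Proposition~\ref{prop:varprop} (iv), take a transverse variation of $\ga$ by characteristic curves of $\Sg$ and let $V$ be the associated variational field, which is tangent to $\Sg$ and satisfies Lemma~\ref{lem:ccjacobi}. Put $\phi:=\escpr{V,T}$. Using $\escpr{V,N}=0$ together with \eqref{eq:nuh}, \eqref{eq:ese} and $J(Z)=-\nuh$ gives $\escpr{V,\nuh}=-\mnh^{-1}\escpr{N,T}\,\phi$ and $\escpr{V,S}=-\mnh^{-1}\phi$. Differentiating $\phi$ along $\ga$ with \eqref{eq:dut} and \eqref{eq:conmute} (as in the derivation of \eqref{eq:prima1}) yields $\mnh\,\phi'=2\escpr{N,T}\,\phi$; squaring and inserting the first identity in \eqref{eq:relations} produces
\[
\mnh^2\big((\phi')^2+4\phi^2\big)=4\phi^2.
\]
Combining this with \eqref{eq:znh}, \eqref{eq:vnt} and the expression for $Z(\escpr{B(Z),S})$ in \eqref{eq:zbzs}, a direct computation rewrites $P=E\big((\phi')^2+4\phi^2\big)^{-1}$, where $E:=2\phi\phi''-(\phi')^2+4(H^2+K)\,\phi^2$, and hence, after using the displayed relation, the clean identity
\[
\mathcal{L}(\mnh)=\frac{E}{\phi^2}.
\]

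The crucial step is the sign of $E$. Differentiating $E$ and invoking the third order equation $\phi'''+4(H^2+K)\,\phi'=0$ of Lemma~\ref{lem:ccjacobi} (iv) (legitimate since $K$ is constant) gives $E'=2\phi\big(\phi'''+4(H^2+K)\phi'\big)=0$, so $E$ is \emph{constant} along $\ga$. Now the hypothesis $H^2+K\geq 0$ and completeness enter: the displayed relation and $\mnh>0$ (empty singular set) force $\phi$ to have no zeros on $\rr$, for a zero of the nontrivial solution $\phi$ would be isolated and would drive $\mnh\to 0$ there. Writing $\mu:=4(H^2+K)\geq 0$ and using the explicit solutions of Lemma~\ref{lem:jacobicon}, a nonvanishing $\phi$ is either a nonzero constant or attains a global extremum at some $s_0$; evaluating the constant $E$ there, where $\phi'(s_0)=0$ and $\phi(s_0)\phi''(s_0)\geq 0$, yields $E=2\phi(s_0)\phi''(s_0)+\mu\,\phi(s_0)^2\geq 0$. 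Thus $E\geq 0$ and $\mathcal{L}(\mnh)=E/\phi^2\geq 0$; moreover the same computation shows $E>0$ unless $\mu=0$ and $\phi$ is constant (for $\mu>0$ the bounded oscillation never vanishes only if its center dominates its amplitude, and for $\mu=0$ the degree $\leq 2$ polynomial is either definite, giving $E>0$, or a nonzero constant, giving $E=0$).

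For the equality case, $\mathcal{L}(\mnh)=0$ on $\Sg$ forces $E=0$ along every characteristic curve, hence $\mu=0$, i.e.\ $H^2+K=0$, and $\phi$ constant on each $\ga$; then $\mnh^2=4\phi^2/(4\phi^2)=1$, so $\escpr{N,T}=0$ and $\Sg$ is vertical. Conversely a CMC vertical surface with $H^2+K=0$ satisfies $\mathcal{L}(\mnh)=4(H^2+K)=0$ by \eqref{eq:lnh} and the computation in the proof of Proposition~\ref{prop:stvert}. I expect the main obstacle to be the algebraic bookkeeping that passes from the geometric quantities $\escpr{N,T}$, $\mnh$ and $\escpr{B(Z),S}$ to the single scalar $\phi$ and establishes the identity $\mathcal{L}(\mnh)=E/\phi^2$; once this is secured, the constancy of $E$ together with the global nonvanishing of $\phi$ forced by completeness makes the sign, and the equality case, transparent.
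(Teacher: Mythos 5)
Your proof is correct and follows essentially the same route as the paper's: both introduce the CC-Jacobi field $V$ of the variation by characteristic curves, exploit the fact that $\phi=\escpr{V,T}$ can never vanish on a complete surface with empty singular set, and reduce the sign of $\mathcal{L}(\mnh)$ to the same discriminant-type quantities attached to the explicit solutions of the third-order equation in Lemma~\ref{lem:ccjacobi}~(iv). The only difference is organizational: where the paper computes $\phi(0)$, $\phi'(0)$, $\phi''(0)$ at a fixed base point and reads the inequality off the explicit solution there, you package the same information into the first integral $E=2\phi\phi''-(\phi')^2+4(H^2+K)\,\phi^2$ together with the (verifiably correct) identity $\mathcal{L}(\mnh)=E/\phi^2$, which makes transparent that $\phi^2\,\mathcal{L}(\mnh)$ is constant along each characteristic curve and lets you conclude by evaluating $E$ at a suitable extremum of $\phi$.
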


\begin{proof}
Take a point $p\in\Sg$.  Let $I$ be an open interval containing the origin, and 
$\alpha:I\to\Sg$ a piece of the integral curve passing through $p$ of the vector field $S$ defined in \eqref{eq:ese}.  Consider the
characteristic curve $\ga_{\eps}(s)$ of $\Sg$ with $\ga_{\eps}(0)=\alpha(\eps)$.  By
Proposition~\ref{prop:varprop} (iii) any $\ga_{\eps}$ is a piece
of a CC-geodesic in $M$ with curvature $H$.  Let $\ga:=\ga_{0}$.  Define the map
$F:I\times\rr\to\Sg$ given by $F(\eps,s):=\ga_{\eps}(s)$.  Denote
$V(s):=(\ptl F/\ptl\eps)(0,s)$.  We know by Lemma~\ref{lem:ccjacobi} that $V$ is a CC-Jacobi field along $\ga$ such that $[\dot{\ga},V]=0$. Clearly $V(0)=S_p$. We denote by primes $'$ the derivatives of functions depending on $s$, and the covariant derivatives along $\ga$. By equations \eqref{eq:prima1} and \eqref{eq:prima21}, we get
\begin{align}
\label{eq:tukan1}
\escpr{V,T}'&=-2\escpr{V,\nuh},
\\
\nonumber
\escpr{V,T}''&=-2\big(\escpr{V',\nuh}-2H\,\escpr{V,Z}+\escpr{V,T}\big).
\end{align}
By using $[\dot{\ga},V]=0$ and \eqref{eq:dvnuh}, we have
\[
\escpr{V',\nuh}=\escpr{D_ZV,\nuh}=\escpr{D_VZ,\nuh}=-\escpr{Z,D_V\nuh}
=\mnh^{-1}\big(\escpr{B(Z),V}+\escpr{N,T}\,\escpr{V,\nuh}\big),
\]
so that
\begin{equation}
\label{eq:tukan3}
\escpr{V,T}''=-2\mnh^{-1}\,\big(\escpr{B(Z),V}+\escpr{N,T}\,\escpr{V,\nuh}-2H\mnh\,\escpr{V,Z}+\mnh\,\escpr{V,T}\big).
\end{equation}
Now we can see that $\{V,\dot{\ga}\}$ is a basis of the tangent plane to $\Sg$ along $\ga$.  As $\{\dot{\ga},\nuh,T\}$ is an orthonormal basis, it suffices to show that $\escpr{V,T}$ and $\escpr{V,\nuh}$ do not vanish simultaneously.  Suppose that we can find $s_{0}$ such that $\escpr{V,T}(s_{0})=\escpr{V,\nuh}(s_{0})=0$. Then $V(s_{0})$ is proportional to $\dot{\ga}(s_{0})$. Moreover, we infer from \eqref{eq:tukan1}, \eqref{eq:tukan3} and \eqref{eq:mc2} that $\escpr{V,T}'(s_{0})=\escpr{V,T}''(s_{0})=0$.  By taking into account that $\escpr{V,T}$ satisfies the third order equation in Lemma~\ref{lem:ccjacobi} (iv) we deduce that $\escpr{V,T}=0$ along $\ga$, a contradiction since $\escpr{V,T}(0)=-\mnh(p)<0$. This proves the claim and allows to conclude that $\escpr{V,T}$ never vanishes along $\ga$ since $\Sg$ has empty singular set. We distinguish two cases.

\emph{Case 1.} If $H^2+K=0$ then by Lemma~\ref{lem:jacobicon} (ii) we have
\begin{equation}
\label{eq:poli1}
\escpr{V,T}(s)=as^2+bs+c.
\end{equation}
The constants $a,b,c$ can be computed from \eqref{eq:tukan1} and \eqref{eq:tukan3}. They are given by
\begin{align*}
a&=(1/2)\escpr{V,T}''(0)=-\mnh^{-1}(p)\,\big(\escpr{B(Z),S}+\escpr{N,T}^2-\mnh^2\big)(p),
\\
b&=\escpr{V,T}'(0)=-2\escpr{N,T}(p),
\\
c&=\escpr{V,T}(0)=-\mnh(p).
\end{align*}
Hence the fact that $\escpr{V,T}(s)\neq 0$ for any $s\in\rr$ is equivalent to that $b^2-4ac\leq 0$. On the other hand, it follows from \eqref{eq:lnh} and an easy computation, that
\[
b^2-4ac=-\mnh^2(p)\,\mathcal{L}(\mnh)(p),
\]
which implies that $\mathcal{L}(\mnh)(p)\geq 0$ with equality if and only if $a=b=0$, that is, $\escpr{N,T}(p)=0$ and $\escpr{B(Z),S}(p)=1$. In particular, if $\mathcal{L}(\mnh)=0$ over $\Sg$ then $\Sg$ is a vertical surface. Conversely, in any vertical surface we have $\escpr{N,T}=0$ and $\escpr{B(Z),S}=1$, so that $\mathcal{L}(\mnh)=0$.

\emph{Case 2.} If $H^2+K>0$ then Lemma~\ref{lem:jacobicon} (iii) yields
\begin{equation}
\label{eq:poli2}
\escpr{V,T}(s)=\frac{1}{\sqrt{\mu}}\,\big(a\,\sin(\sqrt{\mu}\,s)
-b\,\cos(\sqrt{\mu}\,s)\big)+c,
\end{equation}
where $\mu:=4(H^2+K)$ and the constants $a,b,c$ are given by
\begin{align*}
a&=\escpr{V,T}'(0)=-2\escpr{N,T}(p),
\\
b&=\frac{1}{\sqrt{\mu}}\,\escpr{V,T}''(0)=\frac{-2}{\sqrt{\mu}}\,\mnh^{-1}(p)\,\big(\escpr{B(Z),S}+\escpr{N,T}^2-\mnh^2\big)(p),
\\
c&=\frac{1}{\mu}\,\escpr{V,T}''(0)+\escpr{V,T}(0)=\frac{b}{\sqrt{\mu}}-\mnh(p).
\end{align*}
Define the function $g:\rr\to\rr$ by $g(x)=a\sin(x)-b\cos(x)$.  An elementary analysis shows that
$\escpr{V,T}(s)\neq 0$ for any $s\in\rr$ if and only if $d-b>m$, where $d=\sqrt{\mu}\,\mnh(p)$ and $m=\text{max}\,\{g(x);x\in\rr\}=(a^2+b^2)^{1/2}$. We conclude that $(d-b)^2>m^2=a^2+b^2$, and so
\[
0<d^2-2db-a^2=(\mu-4)\,\mnh^2(p)+4\escpr{B(Z),S}(p)=\mnh^2(p)\,\mathcal{L}(\mnh)(p),
\]
where we have used \eqref{eq:lnh} to obtain the last equality. This shows that $\mathcal{L}(\mnh)(p)>0$.
\end{proof}

Now we can prove the main result of the paper.

\begin{theorem}
\label{th:main}
Let $\Sg$ be a $C^2$ complete orientable surface with empty singular set immersed in $\e$. If $\Sg$ is stable under a volume constraint, then the mean curvature $H$ of $\Sg$ satisfies $H^2+\kappa\leq 0$. Moreover, if $H^2+\kappa=0$ then $\Sg$ is a vertical surface in $\e$.
\end{theorem}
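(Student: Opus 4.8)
My approach would be to argue by contradiction and reduce the whole statement to a strict positivity property of $\mathcal{L}(\mnh)$, which I would then exploit to violate the stability inequality of Proposition~\ref{prop:stcond1}(ii). Since $\Sg\subset\e$ has constant Webster scalar curvature $K=\kappa$, I distinguish the two configurations that must be excluded. If $H^2+\kappa>0$, then $H^2+K\geq 0$ and Proposition~\ref{prop:lnh>0} gives, in fact, $\mathcal{L}(\mnh)>0$ at every point of $\Sg$. If instead $H^2+\kappa=0$ but $\Sg$ is not vertical, then $\mathcal{L}(\mnh)\geq 0$ on $\Sg$ and, by the equality characterization in Proposition~\ref{prop:lnh>0}, $\mathcal{L}(\mnh)$ cannot vanish identically, so $\mathcal{L}(\mnh)>0$ on a nonempty open set. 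In either forbidden case I therefore have $\mathcal{L}(\mnh)\geq 0$ on $\Sg$ with strict inequality on an open set, and the task is to manufacture a contradiction.

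The plan is to insert into Proposition~\ref{prop:stcond1}(ii) a mean zero test function of the form $u=f\mnh$. By Lemma~\ref{lem:fmnh} such a function satisfies $\mathcal{Q}(f\mnh,f\mnh)=\int_\Sg\mnh\,\{Z(f)^2-\mathcal{L}(\mnh)\,f^2\}\,d\Sg$, and the decisive point is that only the derivative $Z(f)$ along the characteristic curves enters. This allows the construction to proceed one-dimensionally along the foliation of $\Sg$ by its characteristic curves, which by Proposition~\ref{prop:varprop}(iii) are CC-geodesics of curvature $H$ and, by Lemma~\ref{lem:geomodel}, are either closed circles or injective curves defined on all of $\rr$. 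Near a point $p$ where $\mathcal{L}(\mnh)(p)>0$ I would take a transverse integral curve of $S$, flow it by the characteristic flow $F(\eps,s)=\ga_\eps(s)$ as in Proposition~\ref{prop:lnh>0}, and set $f=\chi(\eps)\,\psi(s)$ with $\chi$ a fixed transverse bump. When the characteristic curves are closed I take $\psi\equiv 1$, so that $Z(f)=0$ and $\mathcal{Q}(f\mnh,f\mnh)=-\int_\Sg\mnh\,\mathcal{L}(\mnh)\,\chi^2\,d\Sg<0$ at once; when they are non-compact I take $\psi=\psi_R$ to be a logarithmic cut-off in the arc-length parameter, so that the gradient contribution $\int_\Sg\mnh\,\chi^2\,\psi_R'^2\,d\Sg$ tends to zero while $\int_\Sg\mnh\,\mathcal{L}(\mnh)\,\chi^2\psi_R^2\,d\Sg$ stays bounded below by a fixed positive constant coming from the neighborhood of $p$, giving $\mathcal{Q}(f\mnh,f\mnh)<0$ for $R$ large.

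To meet the mean zero requirement I would run this construction on two disjoint tubes $T_1,T_2$ inside $\{\mathcal{L}(\mnh)>0\}$, obtaining $f_1,f_2\geq 0$ with disjoint supports and $\mathcal{Q}(f_i\mnh,f_i\mnh)<0$, and then set $f=a_1f_1-a_2f_2$ with $a_1=\int_\Sg f_2\mnh\,d\Sg$ and $a_2=\int_\Sg f_1\mnh\,d\Sg$. This forces $\int_\Sg f\mnh\,d\Sg=0$, and since the supports are disjoint the cross term in the bilinear form vanishes, so that $\mathcal{Q}(f\mnh,f\mnh)=a_1^2\,\mathcal{Q}(f_1\mnh,f_1\mnh)+a_2^2\,\mathcal{Q}(f_2\mnh,f_2\mnh)<0$. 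The function $u=f\mnh$ lies in $C_0(\Sg)=C_0(\Sg-\Sg_0)$, is $C^1$ in the $Z$-direction, and has mean zero, so Proposition~\ref{prop:stcond1}(ii) would force $\mathcal{Q}(u,u)\geq 0$, a contradiction. This excludes both forbidden configurations and yields $H^2+\kappa\leq 0$, with verticality in the borderline case.

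I expect the genuine obstacle to be the cut-off estimate for non-compact characteristic curves: one must show that $\int_\Sg\mnh\,Z(f)^2\,d\Sg$ can really be made arbitrarily small. This requires controlling, along an infinite characteristic curve, the Riemannian area density in the flow coordinates $F(\eps,s)$; since $\mnh\leq 1$ by \eqref{eq:relations}, that density is governed by the CC-Jacobi field $V=(\ptl F/\ptl\eps)$, whose components grow at most polynomially by Lemma~\ref{lem:jacobicon}. Calibrating the logarithmic profile $\psi_R$ against this growth---or, equivalently, using the substitution \eqref{eq:indexform4b} with a positive ground state built from the nowhere-vanishing function $\escpr{V,T}$ of Proposition~\ref{prop:lnh>0} to absorb the zeroth order term---is the delicate step, together with the verification that the tube functions $f_i$ are globally well defined and compactly supported when the characteristic curves are non-compact (and possibly dense).
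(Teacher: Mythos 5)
Your overall architecture matches the paper's: reduce via Proposition~\ref{prop:lnh>0} to the existence of a point with $\mathcal{L}(\mnh)>0$, then contradict Proposition~\ref{prop:stcond1}(ii) with a mean zero test function built on the characteristic foliation, splitting into the closed and non-compact cases of Lemma~\ref{lem:geomodel}. The closed case and your two-tube device for achieving mean zero are fine (the paper instead uses a single tube with a sign-changing transverse profile $\phi(\eps)$, $\int_I\phi\,d\eps=0$, which works because after its substitution the measure factors as $d\eps\,ds$). However, your primary mechanism for the non-compact case has a genuine gap. In the coordinates $F(\eps,s)$ the area element is $d\Sg=f_\eps\,d\eps\,ds$ with $f_\eps\mnh=|\escpr{V_\eps,T}|$, so the gradient term you must kill is $\int\chi(\eps)^2\,\psi_R'(s)^2\,|\escpr{V_\eps,T}(s)|\,d\eps\,ds$. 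Precisely in the hardest sub-case $H^2+\kappa=0$ with $\Sg$ non-vertical (which includes minimal surfaces in $\mathbb{M}(0)$, i.e.\ the main rigidity assertion), Lemma~\ref{lem:jacobicon}(ii) gives $\escpr{V_\eps,T}(s)=as^2+bs+c$, and at any point where $\mathcal{L}(\mnh)>0$ one has $4ac>b^2$, hence $a\neq 0$ and the density grows quadratically in $s$. Since $\int^\infty s^{-2}\,ds<\infty$, Cauchy--Schwarz gives $1=\big(\int_0^\infty\psi'\big)^2\leq\big(\int\psi'^2\,|\escpr{V_\eps,T}|\big)\big(\int|\escpr{V_\eps,T}|^{-1}\big)$ for \emph{any} compactly supported cut-off with $\psi(0)=1$, so the gradient term is bounded below by a fixed positive constant; no logarithmic (or other) calibration can make it tend to zero. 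Your claim that the gradient contribution vanishes in the limit is therefore false in exactly the case the theorem's second sentence requires.

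The repair you mention only in passing is in fact the indispensable core of the paper's proof: one must test not with $f\mnh$ and \eqref{eq:indexform3} but with $\overline{u}_n=u_n v^{-2}\mnh$ where $v^2=|\escpr{V_\eps,T}|$, and use the ground-state identity \eqref{eq:indexform4b}. This simultaneously (a) turns the relevant measure into the flat $d\eps\,ds$ with weight $v^{-2}$, which is \emph{bounded} along each characteristic curve because the never-vanishing quadratic or trigonometric function $\escpr{V_\eps,T}$ is bounded away from zero, and (b) produces, via \eqref{eq:cepsa2} and \eqref{eq:lnh}, a zeroth-order coefficient equal to $\tfrac12\mathcal{L}(\mnh)/v^2$ plus a term whose sign is controlled by $H^2+\kappa\geq 0$. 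After that substitution the elementary dilation cut-off $\phi(s/n)$ already gives a gradient term of order $1/n$, and Fatou's lemma finishes the argument. So you have located the correct tool, but the proof cannot be run without it; as written, your main route fails.
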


\begin{proof}
To prove the claim it suffices, by virtue of Proposition~\ref{prop:lnh>0}, to see that if $H^2+\kappa\geq 0$ and $\mathcal{L}(\mnh)\neq 0$ over $\Sg$, then $\Sg$ is unstable under a volume constraint.

By Proposition~\ref{prop:lnh>0} we can find $p\in\Sg$ such that $\mathcal{L}(\mnh)(p)>0$. Let $\alpha$ be the integral curve through $p$ of the vector field $S$ defined in \eqref{eq:ese}.  Let $\ga_\eps(s)$ be the characteristic curve of $\Sg$ with $\ga_\eps(0)=\alpha(\eps)$. As $\Sg$ is a complete surface with empty singular set we deduce, by Proposition~\ref{prop:varprop} (iii), that any $\ga_\eps$ is a complete CC-geodesic of $\e$ with curvature $H$. From Lemma~\ref{lem:geomodel} we know that either any $\ga_\eps$ is an injective curve defined on $\rr$, or any $\ga_\eps$ is a closed curve of length $L$. In the first case we denote $I'=\rr$. In the second one $I'$ is a circle of length $L$. We can also take a bounded interval $I$ containing the origin such that the $C^1$ map $F:I\times I'\to\Sg$ given by $F(\eps,s):=\ga_{\eps}(s)$ parameterizes a neighborhood of $\Sg$. Let $V_{\eps}(s):=(\ptl F/\ptl\eps)(\eps,s)$. This is a CC-Jacobi field along $\ga_\eps$ by Lemma~\ref{lem:ccjacobi}. Moreover, the function $\escpr{V_\eps,T}$ never vanishes since $\Sg$ has empty singular set. The fact that $V_\eps(0)=S_{\alpha(\eps)}$ implies that $\escpr{V_{\eps},T}<0$. 
Define the function $f_{\eps}:=\escpr{V_{\eps},S}$. We have $\escpr{V_\eps,T}=-f_\eps\mnh$ and $\escpr{V_\eps,\nuh}=f_\eps\,\escpr{N,T}$, where $\mnh$ and $\escpr{N,T}$ are evaluated along $\ga_\eps$. Hence $f_\eps=-\mnh^{-1}\escpr{V_{\eps},T}$, which is positive along $\ga_{\eps}$.  The Riemannian area element of $\Sg$ with respect
to the coordinates $(\eps,s)$ is given by
\begin{equation}
\label{eq:cepsa}
d\Sg=(|V_{\eps}|^2-\escpr{V_{\eps},\dot{\ga}_{\eps}}^2)^{1/2}=  
f_{\eps}\,d\eps\,ds.
\end{equation}
We define the function 
\begin{equation}
\label{eq:uve}
v(\eps,s):=|\escpr{V_{\eps},T}(s)|^{1/2}=(f_{\eps}\mnh)^{1/2},
\end{equation}
which is clearly positive, continuous on $I\times I'$, and $C^\infty$ with respect to $s$ by Lemma~\ref{lem:aux4}. Set $v_\eps(s):=v(\eps,s)$. If we denote by primes $'$ the derivatives with respect to $s$ then, by using \eqref{eq:tukan1} and
\eqref{eq:tukan3}, we obtain
\begin{align*}
(v^{-2}_\eps)'&=\escpr{V_\eps,T}^{-2}\,\escpr{V_\eps,T}'
=-2\escpr{V_\eps,T}^{-2}\,\escpr{V_\eps,\nuh}=\frac{-2\escpr{N,T}}{f_\eps\mnh^2},
\\
(v_\eps^{-4})'&=-2\escpr{V_\eps,T}^{-3}\,\escpr{V_\eps,T}'=
4\escpr{V_\eps,T}^{-3}\,\escpr{V_\eps,\nuh}=\frac{-4\escpr{N,T}}{f_\eps^2\mnh^3},
\\
(v_\eps^{-4})''&=-2\,\big(\escpr{V_\eps,T}^{-3}\,\escpr{V_\eps,T}'\big)'=
6\escpr{V_\eps,T}^{-4}\,(\escpr{V_\eps,T}')^2-2\escpr{V_\eps,T}^{-3}\escpr{V_\eps,T}''
\\
&=\frac{24\escpr{N,T}^2}{f_\eps^2\mnh^4}-\frac{4}{f_\eps^3\mnh^4}
\,\big(\escpr{B(Z),V_\eps}+\escpr{N,T}\,\escpr{V_\eps,\nuh}-2H\mnh\escpr{V_\eps,Z}
+\mnh\,\escpr{V_\eps,T}\big)
\\
&=\frac{24\escpr{N,T}^2}{f_\eps^2\mnh^4}-\frac{4}{f_\eps^2\mnh^4}
\,\big(\escpr{B(Z),S}+\escpr{N,T}^2-\mnh^2\big),
\end{align*}
where in the last equality we have taken into account $V_\eps=\escpr{V_\eps,Z}Z+f_\eps S$ and \eqref{eq:mc2}. After simplifying, we conclude by \eqref{eq:lnh} that
\begin{align}
\label{eq:cepsa2}
\big((v_\eps^{-2})'\big)^2-\frac{1}{2}\,(v_\eps^{-4})''
&-\frac{\escpr{N,T}}{\mnh}\,(v_\eps^{-4})'=\frac{2\escpr{B(Z),S}-2}{f_\eps^2\mnh^4}
\\
\nonumber
&=\frac{\mathcal{L}(\mnh)}{2f_\eps^2\mnh^2}
-\frac{2\big((H^2+\kappa)\mnh^2+\escpr{N,T}^2\big)}
{f_\eps^2\mnh^4}.
\end{align}

Now we construct a family of functions that we will use to show that $\Sg$ is unstable.  Take $\phi:\rr\to\rr$
such that $\phi\in C_{0}^\infty(I)$, $\phi(0)>0$, and
$\int_{I}\phi(\eps)d\eps=0$.  Let $\rho$ be a positive constant so that
$|\phi'(\eps)|\leq\rho$ for any $\eps\in\rr$. For any $n\in\mathbb{N}$, we
consider the function $u_n:I\times I'\to\rr$ given by
\[
u_{n}(\eps,s):=\phi(\eps)\,\phi_n(s), \quad\text{ where }\quad\phi_n(s):=
\begin{cases}
\phi(s/n),\quad \text{ if  } I'=\rr,
\\
\phi(0), \hspace{0.73cm} \text{ if } I' \text{ is a circle. }
\end{cases}
\]
Finally we define $\overline{u}_{n}:=u_{n}v^{-2}\,\mnh$. Clearly $\overline{u}_n$ is a continuous function with compact support on $I\times I'$; in fact, $\text{supp}(u_n)\subset I\times nI$ if $I'=\rr$. Moreover, $\overline{u}_n$ is $C^\infty$ with respect to $s$ by Lemma~\ref{lem:aux4}. By using the coarea formula together with \eqref{eq:uve}, \eqref{eq:cepsa}, and Fubini's theorem, we obtain
\[
\int_{\Sg}
\overline{u}_{n}\,d\Sg=\int_{I\times I'}u_{n}(\eps,s)\,d\eps\,ds=
\left(\int_{I}\phi(\eps)\,d\eps\right)\,\left(\int_{I'}\phi_n(s)\,ds\right)=0.
\]
If we compute the value of the index form \eqref{eq:indexform} over 
$\overline{u}_{n}$ with respect to the coordinates $(\eps,s)$, then we deduce,
by \eqref{eq:indexform4b}, \eqref{eq:uve}, \eqref{eq:cepsa}, and \eqref{eq:cepsa2}, that
\begin{equation}
\label{eq:indexfinal}
\mathcal{Q}(\overline{u}_{n},\overline{u}_{n})=\int_{I\times I'}\frac{(\ptl 
u_{n}/\ptl s)^2}{v^2}\,d\eps\,ds
-\frac{1}{2}\int_{I\times I'}\frac{\mathcal{L}(\mnh)}{v^2}
\,u_{n} ^2\,d\eps\,ds-r_{n}, 
\end{equation}
where
\[
r_{n}=2\int_{I\times I'}\frac{(H^2+\kappa)\,\mnh^2+\escpr{N,T}^2}{v^2\,\mnh^2}\,u^2_{n} \,d\eps\,ds.  
\]

Suppose that $\Sg$ is stable under a volume constraint. By  
Proposition~\ref{prop:stcond1} (ii) this would imply that 
$\mathcal{Q}(\overline{u}_{n},\overline{u}_{n})\geq 0$ for any 
$n\in\mathbb{N}$. By taking into account \eqref{eq:indexfinal} and 
that $r_{n}\geq 0$, we would get, for any $n\in\mathbb{N}$, that
\[
\frac{1}{2}\int_{I\times I'}\frac{\mathcal{L}(\mnh)}{v^2}
\,u_{n} ^2\,d\eps\,ds\leq \int_{I\times I'}\frac{(\ptl 
u_{n}/\ptl s)^2}{v^2}\,d\eps\,ds.
\]
Note that $\{u_{n}\}_{n\in\mathbb{N}}$ pointwise converges when
$n\to\infty$ to $u(\eps,s):=\phi(0)\,\phi(\eps)$.  By
Proposition~\ref{prop:lnh>0} we have $\mathcal{L}(\mnh)\geq 0$ on
$\Sg$ since we are assuming that $H^2+\kappa\geq 0$.  Thus we 
would be able to apply Fatou's lemma to deduce
\[
\label{eq:crucial}
\frac{1}{2}\int_{I\times I'}\frac{\mathcal{L}(\mnh)}{v^2}
\,u^2\,d\eps\,ds\leq\liminf_{n\to\infty}\int_{I\times I'}\frac{(\ptl 
u_{n}/\ptl s)^2}{v^2}\,d\eps\,ds.
\]
In the previous equation the left-hand term is strictly positive 
since $\mathcal{L}(\mnh)$ and $u$ are continuous positive functions 
at the point $p$. To obtain the desired contradiction it is 
enough to see that the right-hand term equals $0$. This is clear if $I'$ is a circle: in that case $\phi_n$ is constant. On the other hand, for $I'=\rr$ observe that
\[
\int_{I\times I'}\frac{(\ptl u_{n}/\ptl s)^2}{v^2}\,d\eps\,ds=\frac{1}{n^2}\,\int_{I\times nI}\frac{\phi(\eps)^2\,\phi'(s/n)^2}{v(\eps,s)^2}\,d\eps\,ds\leq\frac{\rho^2}{n^2}
\int_{I}\phi(\eps)^2\cdot\left(\int_{nI}\frac{1}{v_\eps(s)^2}\,ds\right)
d\eps.  
\]
Recall that $v_\eps(s)^2=-\escpr{V_{\eps},T}(s)$. Having in mind the expression of $\escpr{V_\eps,T}$ in equations \eqref{eq:poli1} and \eqref{eq:poli2} it is easy to check that $m(\eps):=\max\{v_\eps(s)^{-2};s\in\rr\}$ is a continuous function of $\eps\in I$. If we denote $\ell:=\text{length}(I)$ then we conclude that
\[
0\leq\int_{I\times I'}\frac{(\ptl 
u_{n}/\ptl s)^2}{v^2}\,d\eps\,ds\leq\frac{\rho^2\ell}{n}
\int_{I}\phi(\eps)^2\,m(\eps)\,d\eps,
\]
which tends to $0$ when $n\to\infty$. This completes the proof.
\end{proof}

\begin{remark}
Theorem~\ref{th:main} holds with the same proof for homogeneous Sasakian sub-Riemannian $3$-manifolds where all the complete CC-geodesics of a given curvature are either injective curves defined on $\rr$, or closed curves with the same length. This happens for example in the two models for the sub-Riemannian space $\text{SL}(2,\rr)$ described in \cite{markina} and \cite{boscain-rossi}.
\end{remark}

By using Theorem~\ref{th:main} together with the classification of complete CMC vertical surfaces in $\e$ given in \cite[Prop.~6.16]{rr2} and Corollary~\ref{cor:vertmodel}, we deduce the following result.

\begin{corollary}
\label{cor:main}
Let $\Sg$ be a $C^2$ complete, connected, orientable surface with empty singular set immersed in $\e$. Then we have
\begin{itemize}
\item[(i)] If $\kappa=0$ $($the first Heisenberg group$)$ and $\Sg$ is stable under a volume constraint, then $\Sg$ is a vertical plane.
\item[(ii)] If $\kappa=1$ $($the sub-Riemannian $3$-sphere$)$ then $\Sg$ is unstable under a volume constraint.
\item[(iii)] If $\kappa=-1$ $($the sub-Riemannian hyperbolic $3$-space$)$ and $\Sg$ is stable under a volume constraint, then the mean curvature $H$ of $\Sg$ satisfies $H^2\leq 1$. Moreover, if $H^2=1$ then $\Sg$ is a horocylinder. 
\end{itemize}
\end{corollary}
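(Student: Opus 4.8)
The plan is to obtain all three assertions as direct consequences of Theorem~\ref{th:main}, whose inequality $H^2+\kappa\leq 0$ together with its equality clause carries essentially all the content, supplemented only by the classification of complete CMC vertical surfaces. First I would invoke Theorem~\ref{th:main}: since $\Sg$ is a complete orientable surface with empty singular set in $\e$ that is stable under a volume constraint, its mean curvature must satisfy $H^2+\kappa\leq 0$. I would then derive the three statements by specializing the value of $\kappa$.

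For $\kappa=1$ the inequality reads $H^2\leq-1$, which is impossible; hence no surface satisfying the hypotheses can be stable under a volume constraint, which is precisely (ii). For $\kappa=0$ the inequality becomes $H^2\leq 0$, so $H=0$ and thus $H^2+\kappa=0$; the equality clause of Theorem~\ref{th:main} then forces $\Sg$ to be a vertical surface, and being complete, connected and minimal it must be a vertical plane by the classification of complete CMC vertical surfaces in the first Heisenberg group \cite[Prop.~6.16]{rr2}, which proves (i). For $\kappa=-1$ the inequality gives $H^2\leq 1$; in the borderline case $H^2=1$ one again has $H^2+\kappa=0$, so Theorem~\ref{th:main} makes $\Sg$ vertical, and since $\Sg$ is complete, connected with $|H|=1$, Corollary~\ref{cor:vertmodel}~(iii) identifies it as a horocylinder, which establishes (iii).

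Because the analytic heart of the matter---the stability inequality and the rigidity in the equality case---is already packaged in Theorem~\ref{th:main}, I do not expect any genuine obstacle in this deduction. The only point requiring care is to invoke the appropriate vertical-surface classification in each equality case (namely \cite[Prop.~6.16]{rr2} for $\mathbb{M}(0)$ and Corollary~\ref{cor:vertmodel} for $\mathbb{M}(-1)$), and to check that the constraint $H=0$ singles out the vertical plane in $\mathbb{M}(0)$, while $|H|=1$ singles out the horocylinder among the surfaces listed in Corollary~\ref{cor:vertmodel} in $\mathbb{M}(-1)$.
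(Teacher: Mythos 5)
Your proposal is correct and follows exactly the route the paper takes: the corollary is deduced directly from Theorem~\ref{th:main} (the inequality $H^2+\kappa\leq 0$ and its equality clause forcing verticality), combined with the classification of complete CMC vertical surfaces in $\mathbb{M}(0)$ from \cite[Prop.~6.16]{rr2} and Corollary~\ref{cor:vertmodel} for $\mathbb{M}(-1)$. The case analysis by $\kappa$ and the identification of the vertical plane (for $H=0$) and the horocylinder (for $|H|=1$) are precisely the intended argument.
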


\begin{remark}
1. As a particular case of Corollary~\ref{cor:main} we deduce that a complete, orientable, minimal surface with empty singular set in the first Heisenberg group $\mathbb{M}(0)$ which is also stable under a volume constraint must be a vertical plane. In the definition of stability given in \cite{hrr} the volume constraint is not included. Therefore the previous corollary is stronger than the classification result in \cite[Thm.~4.7]{hrr} and \cite[Thm.~C]{dgnp-stable}. Note also that in $\mathbb{M}(0)$ the vertical planes are area-minimizing surfaces, see \cite[Ex.~2.2]{bscv}.

2. The isoperimetric problem in the sub-Riemannian three-sphere $\mathbb{M}(1)$ seeks minimizers for the perimeter functional \eqref{eq:per} enclosing a fixed volume. So, if a minimizer of class $C^2$ exists then the boundary is a stable surface under a volume constraint. It follows from Corollary~\ref{cor:main} that any $C^2$ solution to the isoperimetric problem in $\mathbb{M}(1)$ has non-empty singular set. Hence it must be one of the volume-preserving area-stationary surfaces with non-empty singular set described in \cite[Sect.~5]{hr1}. In \cite{hr2} we show that the rotationally invariant CMC spheres of $\mathbb{M}(1)$ are stable surfaces under a volume constraint, so that they provide natural candidates to solve the isoperimetric problem.

3. If $\Sg$ is a CMC vertical surface with $H^2\leq 1$ in the sub-Riemannian hyperbolic $3$-space $\mathbb{M}(-1)$ then we proved in Proposition~\ref{prop:stvert} that $\Sg$ is stable under variations as in \eqref{eq:newvar} with $Q_p=N_p$. However, the stability condition in $\mathbb{M}(-1)$ is not so restrictive as in $\e$ with $\kappa\geq 0$ since the Webster scalar curvature is strictly negative. This leads us to expect the existence in $\mathbb{M}(-1)$ of complete stable surfaces, different from vertical ones, having empty singular set and mean curvature $0\leq H^2<1$. 
\end{remark}

\providecommand{\bysame}{\leavevmode\hbox to3em{\hrulefill}\thinspace}
\providecommand{\MR}{\relax\ifhmode\unskip\space\fi MR }
\providecommand{\MRhref}[2]{%
  \href{http://www.ams.org/mathscinet-getitem?mr=#1}{#2}
}
\providecommand{\href}[2]{#2}

\end{document}